\newcommand{\F}{\mathbb{F}}
\newcommand{\Z}{\mathbb{Z}}
\newcommand{\E}{\mathbb{E}}
\newcommand{\cC}{\mathcal{C}}
\newcommand{\cS}{\mathcal{S}}
\newcommand{\xto}{\xrightarrow}
\newcommand{\one}{\mathds{1}}
\newcommand{\bS}{\mathbb{S}}
\DeclareMathOperator{\Tor}{Tor}
\DeclareMathOperator{\HKR}{HKR}
\DeclareMathOperator{\THH}{THH}
\DeclareMathOperator{\Mod}{Mod}
\DeclareMathOperator{\Alg}{Alg}
\DeclareMathOperator{\Free}{Free}
\DeclareMathOperator{\aug}{aug}
\DeclareMathOperator{\pt}{pt}
\DeclareMathOperator{\Ba}{Bar}
\DeclareMathOperator{\GL}{GL}
\DeclareMathOperator{\BGL}{BGL}
\DeclareMathOperator{\Sp}{Sp}
\DeclareMathOperator{\HH}{HH}
\DeclareMathOperator{\gp}{gp}
\DeclareMathOperator{\dlog}{dlog}
\newtheorem{thm}{Theorem}[section]
\newtheorem{lem}[thm]{Lemma}
\newtheorem{prop}[thm]{Proposition}
\newtheorem{defn}[thm]{Definition}
\theoremstyle{definition}
\newtheorem{example}[thm]{Example}
\newtheorem{remark}[thm]{Remark}
\newcommand{\pow}[1]{[\kern-0.1em[#1]\kern-0.1em]}
\title{B\"okstedt periodicity and quotients of DVRs}
\author{Achim Krause, Thomas Nikolaus}
\begin{document}
\maketitle

\begin{abstract}
In this note we compute the topological Hochschild homology of quotients of DVRs. Along the way we give a short argument for B\"okstedt periodicity and generalizations over various other bases. Our strategy also gives a very efficient way to redo the computations of  $\THH$ (resp. logarithmic $\THH$) of complete DVRs originally due to Lindenstrauss-Madsen (resp. Hesselholt--Madsen).
\end{abstract}

\section*{Introduction}

Topological Hochschild homology, THH, together with its induced variant topological cyclic homology, TC, has been one of the major tools to compute algebraic $K$-theory in recent years. It also is an important invariant in its own right, due to its connection to $p$-adic Hodge theory and crystalline cohomology \cite{MR3905467,MR3949030}.

The key point is that topological Hochschild homology $\THH_*(R)$, as opposed to algebraic $K$-theory, can be completely identified for many rings $R$. Let us list some examples here.
\begin{enumerate}
\item
The most fundamental result in the field is B\"okstedt periodicity, which states that 
$
\THH_*(\F_p) = \F_p[x]
$
for a class $x$ in degree $2$. This is also the input for the work of Bhatt--Morrow--Scholze \cite{MR3949030}.
\item
The $p$-adic computation of $\THH_*(\Z_p)$ was also done by B\"okstedt and eventually lead to the $p$-adic identification of $K_*(\Z_p)$, see \cite{MR1317117, MR1663391}.
\item
More generally, Lindenstrauss and Madsen identify $\THH_*(A)$ $p$-adically for a complete DVR $A$ with perfect residue field $k$ of characteristic $p$ \cite{MR1707702}. This computation was one of the key inputs for Hesselholt and Madsen's seminal computation of K-theory of rings of integers in $p$-adic number fields.  
\item Brun computed $\THH_*(\Z/p^n)$ in \cite{MR1750729}. This gives some information about $K_*(\Z/p^n)$, which is still largely unknown, see \cite{MR1823499}.
\end{enumerate}

In this paper we revisit all the $\THH$-computations mentioned above from scratch, and give new, easier and more conceptual proofs. We will go one step further and give a complete formula for 
$\THH_*(A')$ where $A' = A/\pi^k$ is a quotient of a DVR $A$ with perfect residue field of characteristic $p$. We identify $\THH_*(A')$ with the homology of an explicitly described DGA, see Theorem \ref{thm_quot}.  
This for example recovers the computation of $\THH_*(\Z/p^k)$ by Brun and also identifies the ring structure in this case (which was unknown so far). The result shows an interesting dichotomy depending on how large $k$ is when compared to the $p$-adic valuation of the derivative of the  minimal polynomial of a uniformizer of $A$
 (relative to the  Witt vectors of the residue field), see Section \ref{eval}.

The main new idea employed in this paper  is to first compute $\THH$ of $A$ and $A/\pi^k$ relative to the spherical group ring $\bS[z]$. This relative $\THH$ of $A$ satisfies a form of B\"okstedt periodicity, which was to our knowledge first observed by J. Lurie, P. Scholze and B.Bhatt. It appeared in work of Bhatt-Morrow-Scholze \cite{MR3949030} as well as in \cite{MR3874698}. But the maneuver of working relative to the uniformizer is much older in the algebraic context, for example in the theory of Breuil-Kisin modules \cite{MR1293975, MR1669039,MR2600871}.\footnote{We would like to thank Matthew Morrow and Lars Hesselholt for pointing this out and explaining the history to us.} 

Finally, having computed THH relative to $\bS[z]$ we use a descent style spectral sequence (see Section \ref{section5} and Section \ref{sec:thhquot}) to recover the absolute $\THH$. 
In Section \ref{logTHH} we also deduce the computation of logarithmic $\THH$ of CDVRs (due to Hesselholt--Madsen) from the computation of relative $\THH$ using a similar spectral sequence. 

\setcounter{tocdepth}{1}
\renewcommand{\baselinestretch}{0.75}\normalsize
\tableofcontents
\renewcommand{\baselinestretch}{1.0}\normalsize
\subsection*{Conventions}
We freely use the language of $\infty$-categories and spectra. The sphere spectrum is denoted by $\bS$. For a commutative ring $R$ there is an associated commutative ring spectrum which we abusively also denote by $R$. In this situation we have the ring spectra $\HH(R)$ (`Hochschild homology') and $\THH(R)$ (`Topological Hochschild homology') defined as
\[
\HH(R / \Z) = R \otimes_{R \otimes_\Z R} R \qquad \THH(R) =  R \otimes_{R \otimes_\bS R} R \ .
\]
We denote the homotopy groups of these spectra by $\THH_*(R)$ and $\HH_*(R)$. 
More generally there are relative versions for a ring $R$ over a base ring (spectrum) $S$ given as 
$\THH(R / S) = R \otimes_{R \otimes_{S} R} R$ and similar for $\HH$. Note that Hochschild homology as defined here is equivalent to $\THH(R/ \Z)$ and is automatically fully derived. It thus agrees with what is classically called Shukla homology.  

We shall denote the $p$-completion of the spectrum $\THH(R)$ by $\THH(R;\Z_p)$ and the homotopy groups accordingly by $\THH_*(R;\Z_p)$. Note that these are in general not the $p$-completions of the groups $\THH_*(R)$, but in the case that the groups $\THH_*(R)$ have bounded order of $p^\infty$-torsion this is true. There is the commonly used conflicting notation $\THH_*(R;R')$ for THH with coefficients in an $R$-algebra $R'$, given by he homotopy groups of $\THH(R) \otimes_R R'$. To avoid confusion we do not use the notation $\THH(R;R')$ in this paper.

Finally, there are useful equivalences
\begin{align*}
&\THH(A \otimes_\bS B) = \THH(A) \otimes_{\bS} \THH(B) \\
&\THH(R/S) = \THH(R) \otimes_{\THH(S)} S \\
&\THH(A) \otimes_{\bS} B = \THH(A \otimes_{\bS} B / B)
\end{align*}
and some variants which are straighforward to prove and will be used frequently.

\subsection*{Acknowledgments}
We would like to thank Lars Hesselholt, Eva H\"oning, Mike Mandell, Matthew Morrow, Peter Scholze and Guozhen Wang
 for helpful conversations. We also thank Lars Hesselholt and Eva H\"oning for comments on a draft.
 The authors were funded by the Deutsche Forschungsgemeinschaft (DFG, German Research Foundation) under Germany's Excellence Strategy EXC 2044 390685587, Mathematics M\"unster: Dynamic--Geometry--Structure.


\section{B\"okstedt periodicity for $\F_p$}\label{sec_bokstedt}

We want to give a proof of the fundamental result of B\"okstedt, that topological Hochschild homology of $\F_p$ is a polynomial ring on a degree 2 generator. The proof presented here is closely related to the Thom spectrum proof in \cite{Blumberg} based on a result of Hopkins-Mahowald, but in our opinion it is more direct, see Appendix \ref{Thom} for a precise discussion.

Let us first give a slightly more conceptual formulation of B\"okstedt's result.
\begin{thm}[B\"okstedt]\label{thm:bok}
The spectrum $\THH(\F_p)$ is as an $\E_1$-algebra spectrum over $\F_p$
 free on a generator $x$ in degree 2, i.e. equivalent to $\F_p[\Omega S^3]$. 
\end{thm}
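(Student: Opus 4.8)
The plan is to exploit the universal property of the free $\E_1$-$\F_p$-algebra and realize the polynomial generator via a bar construction. First I would recall that $\F_p[\Omega S^3]$ is precisely the free $\E_1$-algebra over $\F_p$ on a single generator in degree $2$: indeed $\Omega S^3$ is the free $\E_1$-monoid (the James construction) on $S^2$, and applying the monoidal functor $\F_p[-]$ gives the free associative $\F_p$-algebra on $\widetilde{\F_p}[S^2] \simeq \Sigma^2 \F_p$. So it suffices to produce an $\E_1$-$\F_p$-algebra map $\F_p[\Omega S^3] \to \THH(\F_p)$ classified by a suitable degree-$2$ class, and then check it is an equivalence.

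The key input is that $\THH(\F_p) = \F_p \otimes_{\F_p \otimes_\bS \F_p} \F_p$ and that $\F_p \otimes_\bS \F_p$ is well understood: its homotopy is an exterior-divided-power pattern, and more to the point $\F_p$ as a module over $\F_p \otimes_\bS \F_p$ has a two-term-like resolution encoding a single generating $\Tor$-class in degree $1$. Equivalently, I would use the identification $\THH(\F_p) \simeq \F_p \otimes_{\F_p \otimes_\bS \F_p} \F_p$ together with the cofiber sequence computing $\F_p \otimes_\bS \F_p$ from the unit $\bS \to \F_p$; the first interesting $k$-invariant produces a class that, after the base change to $\THH$, lands in $\THH_2(\F_p)$. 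Concretely: the augmentation $\F_p\otimes_\bS\F_p \to \F_p$ has fiber which is $1$-connected, and $\THH(\F_p)$ is the $\F_p$-linear bar construction on this augmented algebra; the generator $x \in \THH_2$ is the image of the fundamental class of the fiber shifted by one. This gives the map $\alpha\colon \F_p[\Omega S^3] \to \THH(\F_p)$.

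To see $\alpha$ is an equivalence I would compute $\THH_*(\F_p)$ directly enough to match Poincaré series. The cleanest route is the Bökstedt spectral sequence (or equivalently the bar spectral sequence for $\THH(\F_p) = \F_p \otimes_{\F_p\otimes_\bS\F_p}\F_p$): its $E^2$-page is $\HH_*(\F_{p*}(\F_p\otimes_\bS\F_p))$, and since $\F_{p*}(\F_p\otimes_\bS\F_p)$ is an exterior algebra on a degree-$1$ class (the dual Steenrod-algebra-type computation at the level of the smash, or Bökstedt's original computation), its Hochschild homology is a polynomial algebra on a degree-$2$ class — with no room for differentials for degree reasons, so $\THH_*(\F_p) \cong \F_p[x]$, $|x|=2$, additively. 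Since $\alpha$ hits $x$ and is a ring map, it is surjective on homotopy; comparing Poincaré series (both sides have the same, namely $1/(1-t^2)$) forces $\alpha$ to be an isomorphism on homotopy, hence an equivalence of $\E_1$-$\F_p$-algebras.

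The main obstacle, and the step deserving the most care, is the identification of $\F_{p*}(\F_p \otimes_\bS \F_p)$ (equivalently, justifying the $E^2$-page of the Bökstedt spectral sequence) — one wants a proof of this that does not circularly invoke Bökstedt periodicity. Here I would use the Hopkins--Mahowald theorem that $\F_p$ is the Thom spectrum of a map $\Omega^2 S^3 \to B\GL_1(\bS)$ (equivalently, the free $\E_2$-algebra with $p=0$), which computes $\F_p \otimes_\bS \F_p$ as $\F_p[\Omega^2 S^3 \times \ ?]$-style Thom spectrum and pins down its homotopy cleanly; alternatively, the short self-contained argument alluded to in the text, working relative to $\bS[z]$, sidesteps this. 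In any case the delicate point is getting the generator in the right degree with the right multiplicative behavior and ruling out higher differentials, which — as noted — is handled by connectivity and the sparsity of the $E^2$-page.
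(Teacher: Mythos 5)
There is a genuine gap at the computational heart of your argument. You propose to identify the $E^2$-page of the bar/B\"okstedt spectral sequence by asserting that $\F_{p*}(\F_p\otimes_\bS\F_p)$ is ``an exterior algebra on a degree-$1$ class,'' so that its Hochschild homology is polynomial on a degree-$2$ class with no room for differentials. This is false: $\pi_*(\F_p\otimes_\bS\F_p)$ is the dual Steenrod algebra (polynomial on classes $\overline\zeta_i$ in degrees $2^i-1$ at $p=2$, and $\Lambda(\overline\tau_i)\otimes\F_p[\overline\xi_i]$ at odd $p$). The algebra that \emph{is} exterior on one degree-$1$ class is $\pi_*(\F_p\otimes_\Z\F_p)$, and the degenerate computation you describe is exactly the computation of $\HH(\F_p/\Z)$, whose homotopy is the \emph{divided power} algebra $\Gamma_{\F_p}[x]$, not $\F_p[x]$. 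This is precisely the point of B\"okstedt periodicity: polynomial versus divided powers cannot be detected by Poincar\'e series (both are $1/(1-t^2)$), so your closing argument also fails --- the claim that your map $\alpha\colon\F_p[\Omega S^3]\to\THH(\F_p)$ is surjective on homotopy ``because it is a ring map hitting $x$'' presupposes that the target is generated as a ring by its degree-$2$ class, which is exactly what is in question (in $\Gamma_{\F_p}[x]$ one has $x^p=0$). Moreover, with the correct $E^2$-term $\Tor^{\cA_*}(\F_p,\F_p)$, at odd primes there \emph{are} differentials (on divided powers of $\sigma\overline\tau_i$), and at $p=2$ the additive degeneration still leaves the multiplicative extensions ($E_\infty$ exterior on classes $\sigma\overline\zeta_i$ versus a polynomial abutment) unresolved; resolving them is B\"okstedt's actual theorem and requires genuinely spherical input.

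That input is what you only gesture at in your last paragraph (Hopkins--Mahowald, or equivalently the Dyer--Lashof structure of the dual Steenrod algebra), and it cannot be relegated to a fallback: it is the whole content. The paper's route is to prove first that $\F_p\otimes_\bS\F_p$ is the \emph{free $\E_2$-$\F_p$-algebra} on a degree-$1$ class, i.e.\ $\F_p[\Omega^2S^3]$ (Theorem \ref{thm:free}, via Steinberger's Dyer--Lashof computation and Araki--Kudo/Dyer--Lashof's computation of $H_*(\Omega^2S^3;\F_p)$); once you have this, no spectral sequence over the homotopy ring is needed at all, since
\[
\THH(\F_p)\simeq\F_p\otimes_{\F_p[\Omega^2S^3]}\F_p\simeq\F_p\bigl[\Ba(\pt,\Omega^2S^3,\pt)\bigr]\simeq\F_p[\Omega S^3]
\]
as $\E_1$-$\F_p$-algebras. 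If you prefer to start from Hopkins--Mahowald instead, that works (it is equivalent to Theorem \ref{thm:free}, cf.\ Appendix \ref{Thom}), but then the deduction of the ring structure on $\THH(\F_p)$ needs the extra care explained there, not a Poincar\'e-series comparison.
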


Here $\F_p[\Omega S^3]$ is the group ring of the $\E_1$-group $\Omega S^3$ over $\F_p$ i.e. the $\F_p$-homology $\F_p \otimes_\bS \Sigma^\infty_+ \Omega S^3$. The equivalence between the two formulations relies on the fact that $\Omega S^3$ is the free $\E_1$-group on $S^2$, where $S^2$ is considered as a pointed space.  

Our proof relies on a structural result about the dual Steenrod algebra $\F_p \otimes_\bS \F_p$. We consider this spectrum as an $\F_p$-algebra using the inclusion into the left factor.\footnote{If we use the right factor this produces an equivalent $\F_p$-algebra where the equivalence is the conjugation.} It is an $\mathbb{E}_\infty$-algebra over $\F_p$, but has a universal description as an $\mathbb{E}_2$-algebra. This result seems to be known, at least to some experts, but we have not been able to find it written up in the literature.

\begin{thm}\label{thm:free}
As an $\E_2$-$\F_p$-algebra, the spectrum $\F_p \otimes_\bS \F_p$ is free on a single generator of degree $1$, i.e. it is as an $\E_2$-$\F_p$-algebra equivalent to $\F_p [\Omega^2 S^3]$.
\end{thm}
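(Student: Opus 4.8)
Let me sketch how I'd prove that the dual Steenrod algebra $\F_p \otimes_\bS \F_p$ is the free $\E_2$-$\F_p$-algebra on a degree-1 class.

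The plan is to produce a map from the free $\E_2$-algebra and check it's an equivalence on homotopy groups.

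First, I'd construct the map. The free $\E_2$-$\F_p$-algebra on a degree-1 generator is $\F_p[\Omega^2 S^3]$. So I need a class in $\pi_1(\F_p \otimes_\bS \F_p)$. The dual Steenrod algebra in degree 1 is 1-dimensional: at $p=2$ it's generated by $\xi_1$ (dual to $\mathrm{Sq}^1$), at odd primes by $\tau_0$ (the exterior generator, dual to the Bockstein). Any choice of such a class gives, by freeness, an $\E_2$-$\F_p$-algebra map $\phi\colon \F_p[\Omega^2 S^3] \to \F_p \otimes_\bS \F_p$.

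Then I'd identify the source. Since $\Omega^2 S^3$ is the free $\E_2$-group on $S^1$, we have $\F_p[\Omega^2 S^3] = \F_p \otimes_\bS \Sigma^\infty_+ \Omega^2 S^3$, so $\pi_*$ of the source is $H_*(\Omega^2 S^3; \F_p)$. This homology is classical (computed via the James/Dyer–Lashof machinery): at $p=2$ it is a polynomial algebra $\F_2[y_1, y_2, y_3, \dots]$ with $|y_i| = 2^i - 1$, where $y_{i+1} = Q^{2^i-1}y_i$ (the top Dyer–Lashof operation) — this matches the dual Steenrod algebra $\F_2[\xi_1,\xi_2,\dots]$ with $|\xi_i| = 2^i-1$. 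At odd $p$ it is the tensor of an exterior algebra on classes in degrees $2p^i - 1$ and a polynomial algebra on classes in degrees $2p^i - 2$ (generated by iterated $Q^{p^i}$ and $\beta Q^{p^i}$ applied to the degree-1 class) — this matches $E(\tau_0,\tau_1,\dots) \otimes \F_p[\xi_1,\xi_2,\dots]$. So abstractly $\pi_*$ of source and target agree.

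Finally, I'd check $\phi$ is an equivalence. The clean way: the homology of $\Omega^2 S^3$ is generated as an algebra by Dyer–Lashof operations on the fundamental class, and the dual Steenrod algebra receives the corresponding operations because it is an $\E_\infty$- (hence $\E_2$-) $\F_p$-algebra; one checks that the bottom Dyer–Lashof operations carry $\xi_1$ (resp. $\tau_0$) to $\xi_2$ (resp. $\tau_1$, $\xi_1$), etc. Concretely: $Q^{2^i-1}\xi_i = \xi_{i+1}$ at $p=2$ (this is a standard computation, e.g. via the Nishida relations or the coaction), and analogously at odd primes $Q^{p^i}\tau_i = \tau_{i+1}$ and $\beta Q^{p^i}\tau_i = \xi_{i+1}$. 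Since $\phi$ is an $\E_2$-algebra map it commutes with these operations, so its image contains all the algebra generators of the target; hence $\phi$ is surjective on $\pi_*$. Comparing Poincaré series (both sides have the same, finite in each degree) forces $\phi$ to be an isomorphism, hence an equivalence.

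The main obstacle is the last step — verifying that the Dyer–Lashof operations act on the dual Steenrod algebra the way they act on $H_*(\Omega^2 S^3)$, i.e. that the generators really do propagate. This is where one must invoke the classical computation of $Q$-operations on $H_*\F_p$ (originally due to Steinberger, or extractable from Bökstedt/BMMS), or alternatively argue more slickly: the map $\phi$ is a map of $\E_2$-$\F_p$-algebras between connective spectra that is an isomorphism in degrees $0$ and $1$; one can try to leverage that $\F_p[\Omega^2 S^3]$ is, by the Hopkins–Mahowald theorem, the free $\E_2$-$\F_p$-algebra and that $\F_p \otimes_\bS \F_p$ is "as small as possible" receiving a degree-1 class — but making "as small as possible" precise again amounts to the Poincaré series / Dyer–Lashof comparison. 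A variant avoiding explicit $Q$-operations: use that $\F_p \otimes_\bS \F_p = \F_p \otimes_{\THH(\bS)} \cdots$ type descent, or bootstrap from Theorem \ref{thm:bok}; but the most honest route is the operations computation, so I'd expect to cite it.
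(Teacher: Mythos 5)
Your proposal is correct and takes essentially the same approach as the paper: produce an $\E_2$-map from the free $\E_2$-$\F_p$-algebra on a degree-$1$ class, identify the source via the Araki--Kudo/Dyer--Lashof computation of $H_*(\Omega^2 S^3;\F_p)$, and invoke Milnor's ring-structure computation together with Steinberger's computation of the Dyer--Lashof operations on $\F_p\otimes_\bS\F_p$ to see the generators propagate identically on both sides, forcing the map to be an equivalence. (Only a minor indexing slip at $p=2$: the operation carrying the degree $2^i-1$ generator to the next one is $Q^{2^i}$, not $Q^{2^i-1}$.)
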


We will give a proof of Theorem \ref{thm:free} in the next section. But let us first deduce Theorem \ref{thm:bok} from it. 
\begin{proof}[Proof of Theorem \ref{thm:bok}]
We have an equivalence of $\E_1$-algebras
\begin{align*}
\THH(\F_p)& \simeq
\F_p \otimes_{\F_p \otimes_\bS \F_p} \F_p \\
&\simeq \F_p \otimes_{\F_p[\Omega^2 S^3]} \F_p  \\
&\simeq \F_p\left[  \Ba(\pt, \Omega^2 S^3, \pt) \right] \\
&\simeq \F_p [ \Omega S^3] .
\end{align*}
The third equivalence uses that $\F_p[-]$ sends products to tensor products and preserves colimits.
\end{proof}

\begin{remark}
If one only wants to use that $\F_p \otimes_\bS \F_p$ is free as an abstract $\E_2$-algebra and avoid space level arguments, one can observe that in any pointed presentably symmetric monoidal $\infty$-category $\cC$ one has for every object $X \in C$ an equivalence
\[
\one \otimes_{\Free_{\E_{n+1}(X)}} \one \simeq \Free_{\E_n}(\Sigma X) \ .
\]
This is proven in \cite[Corollary 5.2.2.13]{HA} for $n=0$ and the case  $n >0$ can be reduced to this case using Dunn Additivity by replacing $\cC$ with the $\infty$-category of augmented $\E_n$-algebras $\Alg_{\E_n}^{\aug}(\cC)$. This $\infty$-category satisfies the assumptions of \cite[Corollary 5.2.2.13]{HA} by \cite[Proposition 5.1.2.9]{HA}.
\end{remark}

\subsection{Proof of Theorem \ref{thm:free}}

In order to prove this result we first recall that for every $\E_2$-ring spectrum $R$ over $\F_2$ there exist Dyer-Lashof operations 
$$
Q^i: \pi_k R \to \pi_{k+i}R
$$
for $i  \leq  k+1$ and they satisfy all the relations of the usual Dyer-Lashof operations as long as they make sense. 
For an $\E_2$-algebra $R$ over $\F_p$ with odd $p$, there exist operations 
\begin{align*}
Q^i:& \pi_kR \to \pi_{k+2i(p-1)}R\\
\beta Q^i :& \pi_k R \to \pi_{k+2i(p-1)-1}R
\end{align*}
for $i \leq 2k +1$. 

\begin{prop}\label{prop:key}
Let $R$ be the free $\E_2$-algebra over $\F_p$ on a generator in degree 1. Then
\begin{enumerate}
\item
for $p=2$  we have 
$$\pi_*R =\F_2[x_1, x_2, \ldots]$$ where $|x_i| = 2^i -1$.
The element $x_{i+1}$ is given by 
$Q^{2^i}Q^{2^{i-1}} \ldots Q^8Q^{4}Q^2 x_1$. In addition, $\beta x_i = x_{i-1}^2$.
\item
for $p$ odd we have
$$\pi_*R =\Lambda_{\F_p}(y_0, y_1, \ldots) \otimes \F_p[z_1, z_2, \ldots]$$ where $|y_i| = 2p^i - 1$, $|z_i|= 2p^i - 2$.
The element $y_{i+1}$ is given by $Q^{p^i}\ldots Q^p Q^1 y_0$, the element $z_i$ is given by $\beta Q^{p^i}\ldots Q^p Q^1 y_0$.
\end{enumerate}
Any $\E_2$-algebra $R$ over $\F_p$ whose homotopy ring together with the action of the Dyer-Lashof operations is of the above form, is also free on a generator in degree 1. 
\end{prop}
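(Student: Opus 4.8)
The plan is to identify $R$ with the mod-$p$ homology of $\Omega^2 S^3$, to read off parts~(1) and~(2) from the classical computation of that homology together with its Dyer-Lashof operations, and to deduce the final ``rigidity'' clause from the universal property of a free algebra.

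For the identification, I would first use that $\F_p\otimes_\bS(-)\colon \Sp\to\Mod_{\F_p}$ is a symmetric monoidal left adjoint, hence intertwines the free $\E_2$-algebra functors; applied to $\Sigma\bS$ (noting $\F_p\otimes_\bS\Sigma\bS = \Sigma\F_p$) this gives $R\simeq \F_p\otimes_\bS\Free_{\E_2}(\Sigma\bS)$, the base change of the free $\E_2$-ring spectrum on a degree-$1$ class. By the stable splitting of $\Sigma^\infty_+$ of a free $\E_2$-space, $\Sigma^\infty_+$ of the free $\E_2$-space on the pointed space $S^1$ is given by the operadic formula $\bigoplus_k\mathcal{E}_2(k)_+\wedge_{\Sigma_k}(S^1)^{\wedge k}$, which --- since $(S^1)^{\wedge k}=S^k$ --- is exactly $\Free_{\E_2}(\Sigma\bS)$. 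As $S^1$ is connected, no group completion is needed and May's approximation theorem identifies the free $\E_2$-space on $S^1$ with $\Omega^2\Sigma^2 S^1 = \Omega^2 S^3$. Hence $R\simeq \F_p[\Omega^2 S^3]$, i.e.\ $\F_p\otimes_\bS\Sigma^\infty_+\Omega^2 S^3$, so that $\pi_* R\cong H_*(\Omega^2 S^3;\F_p)$ as graded rings and, under this identification, the $\E_2$-Dyer-Lashof operations on $\pi_* R$ become the usual homology operations on $H_*(\Omega^2 S^3;\F_p)$.

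Given this, parts~(1) and~(2) are precisely the classical computation of $H_*(\Omega^2 S^3;\F_p)$ with its Dyer-Lashof and Bockstein structure (Araki-Kudo, Dyer-Lashof; in this form, Cohen-Lada-May): this homology is the free ``allowable $\E_2$-algebra'' on the image $x_1$ (resp.\ $y_0$) of the fundamental class of $S^1$, which is the stated polynomial algebra (resp.\ exterior tensor polynomial algebra) whose generators are the iterated top operations $Q^{2^i}\cdots Q^2 x_1$ (resp.\ $Q^{p^i}\cdots Q^1 y_0$ and their Bocksteins), and with the Bockstein action a formal consequence of the Nishida relations and $Q^{|v|}v = v^2$ --- for instance $\beta x_i = \beta Q^{2^{i-1}}x_{i-1} = Q^{2^{i-1}-1}x_{i-1} = x_{i-1}^2$, using $|x_{i-1}| = 2^{i-1}-1$. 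Alternatively one could run a cobar-type (homology Eilenberg-Moore) spectral sequence for $\Omega^2 S^3 = \Omega(\Omega S^3)$ starting from $H_*(\Omega S^3;\F_p) = \F_p[x]$ with $|x|=2$, but identifying the multiplicative structure and the generators would still require the same homology-operation input.

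Finally, for the rigidity clause, let $R'$ be an $\E_2$-$\F_p$-algebra whose homotopy ring together with its Dyer-Lashof action has the form above, and let $x\in\pi_1 R'$ be the distinguished degree-$1$ generator ($x_1$, resp.\ $y_0$). By the universal property of the free algebra $R$ there is a map of $\E_2$-$\F_p$-algebras $\phi\colon R\to R'$ sending the canonical degree-$1$ class $\iota$ to $x$. Since $\phi$ commutes with Dyer-Lashof operations and, by parts~(1) and~(2), every polynomial or exterior generator of $\pi_* R$ is the corresponding iterated top operation applied to $\iota$, the induced map $\pi_*\phi$ carries the generators $x_i$ (resp.\ $y_i$, $z_i$) to the like-named generators of $\pi_* R'$; hence $\pi_*\phi$ is a homomorphism of graded $\F_p$-algebras sending a free generating set bijectively onto a free generating set, so it is an isomorphism and $\phi$ is an equivalence of $\E_2$-$\F_p$-algebras. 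The main obstacle is therefore the classical homology computation behind~(1) and~(2), together with the subsidiary bookkeeping of matching the paper's normalization of the $\E_2$-Dyer-Lashof operations with the double-loop-space homology operations used there; the reduction to $\Omega^2 S^3$ and the rigidity argument are then formal.
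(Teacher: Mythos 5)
Your proposal is correct and follows essentially the same route as the paper: identify $R$ with $\F_p[\Omega^2 S^3]$, quote Araki--Kudo and Dyer--Lashof (with Steinberger/Cohen--Lada--May-style Dyer--Lashof and Bockstein structure) for parts (1) and (2), and prove the rigidity clause by mapping the free algebra in via the universal property and noting that the map hits all generators since they are iterated operations on the degree-$1$ class. The extra detail you supply on why the free $\E_2$-$\F_p$-algebra on a degree-$1$ class is the homology of $\Omega^2 S^3$ (May approximation, stable splitting) is a justification the paper leaves implicit but is entirely consistent with its argument.
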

\begin{proof}
We use that $R \simeq \F_p \otimes \Omega^2 S^3$, i.e. we are computing the Pontryagin ring of the space $\Omega^2 S^3$. Then 
the first part is due to Araki and Kudo \cite[Theorem 7.1]{MR0087948}, the second part is due to Dyer-Lashof \cite[Theorem 5.2]{MR0141112}. These results are relatively straightforward computations using the Serre spectral sequence and the Kudo transgression theorem. 

Now for the last part assume that we have given any such $R$ and any non-trivial element $x_1 \in \pi_1(R)$. We get an induced map from the free algebra $\Free_{\E_2}(x_1) \to R$. Since this map is an $\E_2$-map the induced map on homotopy groups is compatible with the ring structure as well as the Dyer-Lashof operations. But everything is generated from $x_1$ under these operations in the same way, so the map is an equivalence.
\end{proof}

\begin{proof}[Proof of Theorem \ref{thm:free}]
By Proposition \ref{prop:key} we only have to verify that the homotopy groups $\F_p \otimes \F_p$ have the correct ring structure and Dyer-Lashof operations. This is a classical calculation due to Milnor for the ring structure and Steinberger \cite[Chapter 3, Theorem 2.2 and 2.3]{MR836132} for the Dyer-Lashof operations: at $p=2$, the generator $x_i$ corresponds to the Milnor basis element $\overline{\zeta}_i$, at $p$ odd $z_i$ corresponds to the element $\overline{\xi}_i$ and $y_i$ to $\overline{\tau}_i$. \end{proof}

\begin{remark}\label{rem_equib}
We want to remark that Theorem \ref{thm:bok} also implies Theorem \ref{thm:free}.
Thus assume that Theorem \ref{thm:bok} holds. 
We have that $\pi_1(\F_p \otimes_{\bS} \F_p)$ is isomorphic to $\F_p$, generated by an element $b$. We can thus choose an $\E_2$-map
\[
\Free_{\E_2}(b) \to \F_p\otimes_\bS \F_p
\]
which induces an equivalence on $1$-types.\footnote{The computation of the first two homotopy groups of $\F_p\otimes_\bS \F_p$ is everything that we input about the dual Steenrod algebra. So in fact even Milnor's computation, as well as the results of Steinberger cited here, could be recovered from an independent proof of B\"okstedt's result.} We can form the Bar construction on these augmented $\F_p$-algebras, and the resulting map
\[
\Free_{\E_1}(x) \to \THH(\F_p)
\]
is an equivalence on $\pi_2$, so by Theorem \ref{thm:bok} it is an equivalence. Thus, Theorem \ref{thm:free} follows from the following lemma.
\end{remark}

\begin{lem}
Let $A\to B$ be a map augmented connected $\E_1$-algebras over $\F_p$. Then if the map
\[
\F_p\otimes_A \F_p \to \F_p\otimes_B \F_p
\]
is an equivalence, so is $A\to B$.
\end{lem}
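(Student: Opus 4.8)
The plan is to show that $A \to B$ becomes an equivalence after base change along the augmentation, and then deduce the statement using connectivity and a conservativity argument. First I would recall that for an augmented connected $\E_1$-algebra $A$ over $\F_p$, the object $\F_p \otimes_A \F_p$ is computed by the two-sided bar construction $\Ba(\F_p, A, \F_p)$, and that the augmentation ideal $I_A = \fib(A \to \F_p)$ controls everything: the bar filtration gives $\F_p \otimes_A \F_p \simeq \colim_n \mathrm{cofib}$-pieces whose associated graded is $\bigoplus_n \Sigma^n I_A^{\otimes n}$ (tensor over $\F_p$). Thus a map $A \to B$ of such algebras inducing an equivalence $\F_p \otimes_A \F_p \simeq \F_p \otimes_B \F_p$ must, after passing to associated gradeds of the bar filtrations, induce an equivalence on $\bigoplus_n \Sigma^n I_A^{\otimes n} \to \bigoplus_n \Sigma^n I_B^{\otimes n}$.

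The key step is to bootstrap from this to the claim that $I_A \to I_B$ is an equivalence, hence $A \to B$ is. Since $A, B$ are connected (I read this as: the augmentation ideals are $1$-connective, i.e. $\pi_0 A = \pi_0 B = \F_p$ and $I_A, I_B$ are concentrated in degrees $\geq 1$), the $n$-th bar term $\Sigma^n I_A^{\otimes n}$ is $2n$-connective. Therefore in any fixed degree $d$, only the terms with $n \le d$ contribute, and one can run an induction on degree: in the lowest nonvanishing degree the $n=1$ term $\Sigma I_A$ is the only contribution and detects $\pi_{*} I_A$ through a range, so the equivalence on bar constructions forces $I_A \to I_B$ to be an equivalence in that range; feeding this back in handles the higher bar terms in the next degree, and so on. Concretely I would phrase this as: the cofiber $C = \mathrm{cofib}(A \to B)$ is $I$-adically complete-ish and its bar construction vanishes, and connectivity of the bar filtration forces $C \simeq 0$ by induction on the (co)connectivity. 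Equivalently, $\F_p \otimes_A -$ is conservative on connective $A$-modules with appropriate finiteness/connectivity, and $B$ regarded as an $A$-module has $\F_p \otimes_A B \simeq \F_p \otimes_A \F_p \otimes_{\F_p} (\F_p \otimes_B B)$-type identification; but the cleanest route is the explicit bar-filtration induction above.

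The main obstacle I expect is making the induction on bar degree genuinely rigorous rather than hand-wavy: one must check that the bar filtration on $\F_p \otimes_A \F_p$ is exhaustive and that its graded pieces really are $\Sigma^n I_A^{\otimes n}$ with the stated connectivity (this uses that $A$ is connected so that $I_A$ is $1$-connective, so $I_A^{\otimes n}$ is $n$-connective and the shift makes it $2n$-connective — hence for each fixed homotopical degree the filtration is eventually constant), and then to argue that an equivalence on the colimit together with this connectivity bound propagates downward through the tower to an equivalence on each $I_A^{\otimes n}$, in particular on $I_A$ itself. Once $I_A \simeq I_B$ compatibly with the map, the five lemma (or the fiber sequence $I_A \to A \to \F_p$) gives $A \simeq B$. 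An alternative I would keep in reserve is to invoke that $\F_p \otimes_A -\colon \Mod_A \to \Mod_{\F_p\otimes_A\F_p}$-modules is conservative when restricted to the subcategory generated under colimits and extensions by $\F_p$ with connective, degreewise-finite homotopy — $B$ lies in this subcategory because it is built from $\F_p$ by its own bar filtration — and then $\F_p \otimes_A A \to \F_p \otimes_A B$ being an equivalence (which is exactly the hypothesis, since $\F_p \otimes_A B \simeq \F_p \otimes_A \F_p \otimes_{\F_p} \F_p \simeq \F_p \otimes_B \F_p$ after identifying $\F_p \otimes_A B$ with $\F_p \otimes_A \F_p \otimes_{\F_p \otimes_B \F_p}(\F_p \otimes_B \F_p)$) forces $A \to B$ to be an equivalence.
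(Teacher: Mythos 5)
Your proposal is correct and follows essentially the same route as the paper: the skeletal filtration of the bar construction computing $\F_p\otimes_A\F_p$, with graded pieces $\Sigma^n(A/\F_p)^{\otimes_{\F_p} n}$ (equivalently $\Sigma^n I_A^{\otimes n}$, since the augmentation splits the unit), the $1$-connectivity coming from connectedness, and the resulting bound showing that in the lowest degree where $\cofib(A\to B)$ is nonvanishing only the $n=1$ term contributes. The paper just runs this as a single contrapositive step at that lowest degree rather than as a degreewise induction, so your bootstrapping is the same argument in slightly more elaborate form (and note you never need, and should not assert, that an equivalence on the filtered totals induces one on associated gradeds).
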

\begin{proof}
Assume $A\to B$ is not an equivalence. Let $d$ denote the connectivity of the cofiber of $A\to B$, i.e. $\pi_i(B/A) = 0$ for $i< d$, but $\pi_d(B/A)\neq 0$. $\F_p\otimes_A \F_p$ admits a filtration (obtained by filtering the Bar construction over $\F_p$ by its skeleta) whose associated graded is given in degree $n$ by $\Sigma^n (A/\F_p)^{\otimes_{\F_p} n}$. Here $A/\F_p$ is the cofiber of $\F_p\to A$ and $1$-connective by assumption. The map 
\begin{equation*}
\Sigma^n (A/\F_p)^{\otimes_{\F_p} n} \to \Sigma^n (B/\F_p)^{\otimes_{\F_p} n}
\end{equation*}
has $(d + 2n - 1)$-connective cofiber. Thus, the $(d+1)$-type of the cofiber of $\F_p\otimes_A \F_p \to \F_p\otimes_B \F_p$ receives no contribution from the terms for $n\geq 2$, and coincides with the $(d+1)$-type of the cofiber of $\Sigma (A/\F_p) \to \Sigma (B/\F_p)$, which is $\Sigma (B/A)$ and has nonvanishing $\pi_{d+1}$ by assumption. So $\F_p\otimes_A \F_p \to \F_p\otimes_B \F_p$ cannot have been an equivalence.
\end{proof}

\section{B\"okstedt periodicity for perfect rings}\label{sec_2}

Now we also want to recover the well-known calculation of $\THH$ for a perfect $\F_p$-algebra $k$. This can directly  be reduced to B\"okstedt's theorem. Let us first note that there is a morphism $\THH(\F_p) \to \THH(k)$ induced from the map $\F_p \to k$. Moreover the spectrum $\THH(k)$ is a $k$-module, so that we get an induced map
\begin{equation}\label{perfect}
k[x] \cong k \otimes_{\F_p}  \THH(\F_p) \to \THH(k)
\end{equation}
where the first term $k[x]$ denotes the free $\E_1$-algebra on a generator in degree 2.

\begin{prop}\label{prop_perfect}
For a perfect $\F_p$-algebra $k$ 
the map \eqref{perfect} is an equivalence. 
\end{prop}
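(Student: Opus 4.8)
The plan is to reduce the statement to Bökstedt periodicity (Theorem \ref{thm:bok}) using the fact that a perfect $\F_p$-algebra $k$ is flat over $\F_p$ and, moreover, that the relative Frobenius makes $k$ behave like an étale (in fact formally étale) $\F_p$-algebra, so that $\THH(k)$ is obtained from $\THH(\F_p)$ simply by base change. First I would record the multiplicativity/base-change formula $\THH(k) \simeq \THH(\F_p) \otimes_{\F_p} (k \otimes_{k\otimes_{\F_p} k} k) = \THH(\F_p) \otimes_{\F_p} \HH(k/\F_p)$ -- more precisely, since $\THH(k) = \THH(\F_p \otimes_{\bS} k / \F_p)$ and $\THH(\F_p\otimes_\bS k) \simeq \THH(\F_p)\otimes_\bS \THH(k/\F_p)$-hmm, let me instead argue directly: $k \otimes_{\F_p} \THH(\F_p) \to \THH(k)$ is a map of $\E_1$-$k$-algebras, and by the formula $\THH(R/S) = \THH(R)\otimes_{\THH(S)} S$ applied with $S = \F_p$, $R = k$, together with $\THH(k) \otimes_{\THH(\F_p)} \THH(\F_p) = \THH(k)$, the cofiber of \eqref{perfect} is controlled by $\HH(k/\F_p)$.

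The key computational input is that $\HH_*(k/\F_p) = k$ concentrated in degree $0$, i.e.\ the unit $k \to \HH(k/\F_p)$ is an equivalence. This is where perfectness enters: for $k$ perfect over $\F_p$ the relative cotangent complex $L_{k/\F_p}$ vanishes (the Frobenius on $k$ is an isomorphism, and $L_{\F_p/\F_p}$-style arguments, or the fact that perfect algebras are colimits of étale-over-$\F_p[t^{1/p^\infty}]$-type things, or simply that $\Omega^1_{k/\F_p}=0$ and $k$ is ind-smooth relative to $\F_p$ after inverting Frobenius), hence $\HH(k/\F_p) \simeq k$. Concretely one can see $\HH_*(k/\F_p)=k$ since $k\otimes_{\F_p} k \to k$ is a flat base change away from the diagonal after using that Frobenius is bijective, so $\Tor^{k\otimes_{\F_p}k}_*(k,k) = k$. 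Granting this, I would then argue: $\THH(k) \simeq \THH(\F_p) \otimes_{\F_p} \HH(k/\F_p)$ via the chain of equivalences
\[
\THH(k) = \THH(\F_p \otimes_\bS k / \F_p) \simeq (\THH(\F_p) \otimes_\bS \THH(k/\F_p)) \otimes_{\THH(\F_p)\otimes_\bS \F_p} \F_p,
\]
and massage this until it reads $\THH(\F_p)\otimes_{\F_p}\HH(k/\F_p) \simeq \THH(\F_p)\otimes_{\F_p} k = k[x]$, compatibly with the map \eqref{perfect}.

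The main obstacle is getting the base-change bookkeeping exactly right and making sure the resulting identification is compatible with the specified map \eqref{perfect} rather than merely an abstract equivalence of $k$-modules or of rings; in particular one must check the map is the unit-induced one so that tracking the generator $x$ is legitimate. A cleaner route that sidesteps some of this: observe that $\THH(k)$ is a $\THH(\F_p)$-module, hence a module over $\THH(\F_p) = \F_p[x]$, so $\THH_*(k)$ is a graded module over the polynomial ring $\F_p[x]$; then it suffices to show $x$ acts freely and $\THH_*(k)/x \cong \pi_0 = k$, which one extracts from the spectral sequence $\THH_*(\F_p)\otimes_{\F_p}\HH_*(k/\F_p) \Rightarrow \THH_*(k)$ (degenerate, since $\HH_*(k/\F_p)=k$). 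Either way the crux is the vanishing $\HH_{>0}(k/\F_p)=0$ for perfect $k$, which I expect the authors establish via the cotangent complex $L_{k/\F_p}\simeq 0$.
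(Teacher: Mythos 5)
Your overall strategy works, but it is genuinely different from the paper's, and the first half of your write-up contains a misstep you should discard. The identity $\THH(k)=\THH(\F_p\otimes_\bS k/\F_p)$ is false: by the base-change formula $\THH(A)\otimes_\bS B=\THH(A\otimes_\bS B/B)$ the right-hand side is $\F_p\otimes_\bS\THH(k)$, and in general there is no a priori ring-level splitting $\THH(R)\simeq\THH(\F_p)\otimes_{\F_p}\HH(R/\F_p)$ to be extracted by ``massaging'' base changes. The paper obtains exactly this splitting, but only by lifting $k$ to the spherical Witt vectors $\bS_{W(k)}$ (so that $k=\F_p\otimes_\bS\bS_{W(k)}$ and $\THH(k)=\THH(\F_p)\otimes_\bS\THH(\bS_{W(k)})$), a move which also yields $\THH(\bS_{W(k)};\Z_p)\simeq\bS_{W(k)}$, used later in the paper. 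Your ``cleaner route'' avoids the need for any such lift and is correct in substance: the only base change required is $\THH(k)\otimes_{\THH(\F_p)}\F_p\simeq\THH(k/\F_p)=\HH(k/\F_p)$, and once one knows $\HH_{>0}(k/\F_p)=0$ the map \eqref{perfect} is an equivalence by a graded Nakayama argument over $\THH_*(\F_p)=\F_p[x]$ (Theorem \ref{thm:bok} enters here). Two refinements: the spectral sequence $\THH_*(\F_p)\otimes_{\F_p}\HH_*(k/\F_p)\Rightarrow\THH_*(k)$ you invoke does exist (it is the Brun-type sequence the paper constructs later in Proposition \ref{ss_basis}), but it needs a construction; you can bypass it entirely by noting $\F_p\simeq\cofib(\Sigma^2\THH(\F_p)\xrightarrow{x}\THH(\F_p))$, so the long exact sequence of $\Sigma^2\THH(k)\xrightarrow{x}\THH(k)\to\HH(k/\F_p)\simeq k$ shows inductively that $x$ acts as an isomorphism $\THH_{n-2}(k)\to\THH_n(k)$ for $n\geq 1$, which is precisely the statement that \eqref{perfect} is an equivalence (this also settles your worry about tracking the specific map). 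For the key vanishing, your cotangent-complex justification ($L_{k/\F_p}\simeq 0$ for perfect $k$, plus the completeness of the HKR filtration) is solid and is explicitly acknowledged in the paper as an immediate alternative; the paper instead argues that the Frobenius acts as zero on $\HH_{>0}(A/\F_p)$ for any $\F_p$-algebra and as an isomorphism for $A=k$ perfect. Your parenthetical flatness/Tor claim about $k\otimes_{\F_p}k\to k$ is hand-wavy and unnecessary; drop it in favor of the cotangent-complex (or Frobenius) argument.
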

\begin{proof}
Recall that for every perfect $\F_p$-algebra $k$ there is a $p$-complete $\E_\infty$-ring spectrum $\bS_{W(k)}$, called the spherical Witt vectors, with $\pi_0(\bS_{W(k)}) = W(k)$ and which is flat over $\bS_p$. It follows that the homology $\Z \otimes_{\bS} \bS_{W(k)}$ is given by $W(k)$ and thus the $\F_p$-homology $\F_p  \otimes_\bS \bS_{W(k)} $ by $k$. 

In particular we get that 
\begin{align*}
\THH(k) &= \THH(\F_p \otimes_\bS \bS_{W(k)}) \\
&=   \THH(\F_p)  \otimes_\bS \THH(\bS_{W(k)}) \\
&= \THH(\F_p)\otimes_{\F_p} (\F_p \otimes_\bS \THH(\bS_{W(k)})) \\
& = \THH(\F_p)\otimes_{\F_p} \HH(k / \F_p)  \ .
\end{align*}
where $\HH(k / \F_p)$ is the Hochschild homology of $k$ relative to $\F_p$. The result now follows once we know that this is given by $k$  concentrated in degree $0$. This immediately follows from the vanishing of the cotangent complex of $k$ but we want to give a slightly different argument here: 

It suffices to show that the positive dimensional groups $\HH_i(k/\F_p)$ are zero. To see this it is enough to show that for every $\F_p$-algebra $A$ the Frobenius $\varphi: A \to A$ induces the zero map $\HH_i(A/ \F_p) \to \HH_i(A / \F_p)$ for $i > 0$, since for $A = k$ perfect the Frobenius is also an isomorphism. Now for general $A$ this follows since $\HH(A/\F_p)$ is a simplicial commutative $\F_p$-algebra and the Frobenius $\varphi$ acts through the levelwise Frobenius. But the levelwise Frobenius for every simplicial commutative $\F_p$-algebra induces the zero map in positive dimensional homotopy. \footnote{This follows since for every simplicial commutative $\F_p$-algebra $R_\bullet$ the Frobenius can be factored as $\pi_n(R_\bullet) \to \pi_n(R_\bullet)^{\times p} \to \pi_n(R_\bullet)$ where the latter map is induced by the multiplication $R_\bullet^{\times p} \to R_\bullet$ considered as a map of underlying simplicial sets. For $n > 0$ it follows by an Eckmann-Hilton argument that the multiplication map $\pi_n(R_\bullet) \times \pi_n(R_\bullet) \to \pi_n(R_\bullet)$ is at the same time multilinear and linear, hence zero.}
\end{proof}

\begin{remark}\label{rem_padic}
Note that the proof in particular shows that  $\THH(\bS_{W(k)})$ is $p$-adically equivalent to $\bS_{W(k)}$ as this can be checked on $\F_p$-homology. We will also write $\THH(\bS_{W(k)};\Z_p)$ for the $p$-completion of $\THH(\bS_{W(k)})$ so that we have 
\[
\THH(\bS_{W(k)};\Z_p) \simeq \bS_{W(k)} \ .
\]
Integrally this is not quite the case, as one encounters contributions form the cotangent complex $L_{W(k)/\Z}$ which only vanishes after $p$-completion.
\end{remark}
We also note that one can also deduce Proposition \ref{prop_perfect} from a statement similar to Theorem \ref{thm:free} which we want to list for completeness.

\begin{prop}
For $k$ a perfect $\F_p$-algebra, we have
\[
k\otimes_{\bS_{W(k)}} k = \Free^k_{\E_2}(\Sigma k).
\]
i.e. the spectrum $k\otimes_{\bS_{W(k)}} k$ is as an $\mathbb{E}_2$-$k$-algebra free on a single generator in degree 1. 
\end{prop}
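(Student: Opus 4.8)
The plan is to obtain this by base change along the spherical Witt vectors from Theorem~\ref{thm:free}. Recall from the proof of Proposition~\ref{prop_perfect} that $\bS_{W(k)}$ is a connective $\E_\infty$-ring with $\pi_0 = W(k)$ and that $\F_p \otimes_\bS \bS_{W(k)} \simeq k$. Since $\bS_{W(k)}$ is connective, the unit induces on $\pi_0$ the surjection $W(k) = \pi_0\bS_{W(k)} \to \pi_0(\F_p\otimes_\bS\bS_{W(k)}) = W(k)/p = k$, which is the reduction; hence this is an equivalence of $\E_\infty$-$\bS_{W(k)}$-algebras, where $k$ carries its usual $\bS_{W(k)}$-algebra structure. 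Equivalently, $k$ is the pushout of $\F_p \leftarrow \bS \to \bS_{W(k)}$.

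First I would use pasting of pushout squares of $\E_\infty$-rings to compute
\[
k \otimes_{\bS_{W(k)}} k \;\simeq\; (\F_p\otimes_\bS\bS_{W(k)})\otimes_{\bS_{W(k)}} k \;\simeq\; \F_p\otimes_\bS k
\]
as $\E_\infty$-algebras under $k$, the structure map $k \to \F_p\otimes_\bS k$ being induced by $\bS_{W(k)} \to k$. Since $k$ is an ordinary $\F_p$-algebra, $\F_p\otimes_\bS k \simeq (\F_p\otimes_\bS\F_p)\otimes_{\F_p} k$ as $\E_\infty$-algebras under $k$, where $\F_p\otimes_\bS\F_p$ is equipped with one of its two (conjugate) $\F_p$-algebra structures. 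Finally I would restrict all of this to $\E_2$-$k$-algebras and invoke Theorem~\ref{thm:free}: the base change functor $(-)\otimes_{\F_p} k \colon \Alg_{\E_n}(\Mod_{\F_p}) \to \Alg_{\E_n}(\Mod_k)$ is symmetric monoidal, hence commutes both with the restriction functors $\Alg_{\E_\infty}\to\Alg_{\E_2}$ and with the free-$\E_2$-algebra functors, so that
\[
k\otimes_{\bS_{W(k)}}k \;\simeq\; (\F_p\otimes_\bS\F_p)\otimes_{\F_p}k \;\simeq\; \Free^{\F_p}_{\E_2}(\Sigma\F_p)\otimes_{\F_p}k \;\simeq\; \Free^{k}_{\E_2}(\Sigma k).
\]

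The only point needing genuine care is bookkeeping the algebra and module structures through this chain: one must check that the displayed equivalences can be taken to respect the $k$-algebra structure coming from (say) the left tensor factor of $k\otimes_{\bS_{W(k)}} k$, and that the resulting structure map $k \to \F_p\otimes_\bS k$ is identified with a unit map of $\F_p\otimes_\bS\F_p$ for which Theorem~\ref{thm:free} applies (here the conjugation-symmetry noted in its footnote is what lets us not worry about which of the two unit maps appears). Every step is an instance of the symmetric monoidal structure on $\E_\infty$-rings or of pushout pasting, so this is routine but worth spelling out. Alternatively one can mimic Remark~\ref{rem_equib}, deducing the statement from Proposition~\ref{prop_perfect} by passing to the two-sided bar construction $\Ba(k,k\otimes_{\bS_{W(k)}}k,k)\simeq\THH(k/\bS_{W(k)})$ and the lemma following Remark~\ref{rem_equib}; the direct base-change argument above is preferable since it avoids both $p$-completions (cf.\ Remark~\ref{rem_padic}) and connectivity estimates and yields the integral statement at once.
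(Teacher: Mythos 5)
Your argument is correct and is essentially the paper's own proof: the paper likewise uses $\bS_{W(k)}\otimes_\bS\F_p \simeq k$ to rewrite $k\otimes_{\bS_{W(k)}}k \simeq k\otimes_\bS \F_p \simeq k\otimes_{\F_p}(\F_p\otimes_\bS\F_p)$ and then base-changes Theorem \ref{thm:free} along $\F_p\to k$. Your extra bookkeeping of the $k$-algebra structures and the alternative bar-construction route are fine but not needed beyond what the paper records.
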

\begin{proof}
As $\bS_{W(k)} \otimes_{\bS} \F_p = k$, we have 
\[
k\otimes_{\bS_{W(k)}} k = k \otimes_{\bS} \F_p = k\otimes_{\F_p} (\F_p \otimes_{\bS} \F_p),
\]
so the statement follows from base-changing the statement over $\F_p$.
\end{proof}

\section{B\"okstedt periodicity for CDVRs}\label{sec_three}

Now we want to turn our attention to complete discrete valuation rings, abbreviated as CDVRs. 
We will determine their absolute $\THH$ later, but for the moment we focus on an analogue of B\"okstedt's theorem which works relative to the $\E_\infty$-ring spectrum
\[
\bS[z] := \bS[\mathbb{N}] = \Sigma^\infty_+ \mathbb{N}  \ .
\]
For a CDVR $A$ we let $\pi$ be a uniformizer, i.e. a generator of the maximal ideal, and consider it as a $\bS[z]$-algebra via $z\mapsto \pi$. Everything that follows will implicitly depend on such a choice. By assumption $A$ is complete with respect to $\pi$. 
Since $\pi$ is a non-zero divisor this is equivalent to being derived $\pi$-complete. Moreover $A$ if  has residue field of characteristic $p$ then $A$ is also (derived) $p$-complete since $p$ is contained in the maximal ideal. 

The following result is, at least in mixed characteristic, due to Bhargav Bhatt, Jacob Lurie and Peter Scholze  in private communication but versions of it also appear in  \cite{MR3949030} and in \cite{MR3874698}.
\begin{thm}\label{thm:boekstedtDVR}
Let $A$ be a CDVR with perfect residue field of characteristic $p$.  Then we have 
\[
 \THH_*(A / \bS[z]; \Z_p)  = A[x]
\] 
for $x$ in degree 2. 
\end{thm}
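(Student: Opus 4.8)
The plan is to reduce the relative computation over $\bS[z]$ to the known computation over $\F_p$ (Proposition \ref{prop_perfect}) by a base-change argument along $\bS[z] \to \bS_{W(k)}$ and then along $\bS_{W(k)} \to k$, combined with a flat-base-change/Hochschild-homology computation that replaces the missing cotangent-complex input. First I would observe that since $\pi$ is a non-zero-divisor and $A$ is $\pi$-complete, there is a pushout square of $\E_\infty$-rings expressing $A$ as $A \otimes_{\bS_{W(k)}[z]} \bS_{W(k)}$ after $p$-completion, where $z \mapsto \pi$; more precisely, write $A = W(k)[z]\pow{} / (f)$ with $f$ the (Eisenstein) minimal polynomial of $\pi$, so that $A$ is, up to $p$-completion, the derived quotient $W(k) \otimes_{W(k)[z]} W(k)[z]$ — but the cleaner route is to use the formula $\THH(A/\bS[z]) = \THH(A) \otimes_{\THH(\bS[z])} \bS[z]$ from the conventions, together with $\THH(\bS[z]) = \bS[z] \otimes_\bS \THH(\bS) \otimes \cdots$; I will instead use the variant $\THH(A \otimes_\bS B / B) = \THH(A) \otimes_\bS B$ in reverse.

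The main steps, in order, are as follows. (1) Reduce to $k$: using $\THH(A/\bS[z];\Z_p) \otimes_A k = \THH(k \otimes_A A / \bS[z]) = \THH(k/\bS[z];\Z_p)$ is false in general, so instead I compute $\THH(k/\bS[z];\Z_p)$ directly and then bootstrap. For $k$ itself, $z$ acts by $0$, so $k$ as a $\bS[z]$-algebra is $k \otimes_{\bS[z]} \bS$ (killing $z$), giving $\THH(k/\bS[z]) = \THH(k) \otimes_{\THH(\bS[z])} \bS[z]$ and using $\THH(\bS[z]) = \bS[z] \otimes_\bS \bS[S^1 \wedge B\mathbb{N}] $; concretely $\THH_*(\bS[z]) = \bS[z]_*\langle \sigma z\rangle$ with $|\sigma z| = 1$, an exterior-type class, and killing it in the relative construction. (2) Identify $\THH_*(k/\bS[z];\Z_p)$: the above yields $\THH(k/\bS[z];\Z_p) = \THH(k;\Z_p) \otimes_{k[\sigma z]} k$, and since $\THH(k;\Z_p) = k[x]$ with $x$ in degree $2$ and the module structure over $k[\sigma z]$ sends $\sigma z$ to a suitable class, a Bar computation gives a polynomial class in degree $2$ again — this recovers a form of Bökstedt periodicity for $k$ relative to $\bS[z]$. (3) Lift from $k$ to $A$: here is where the CDVR structure and perfectness of $k$ enter. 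Using $\bS_{W(k)}$ and the identification $A \simeq \bS_{W(k)} \otimes_{\bS_{W(k)}[z]} A$ up to $p$-completion (via $z \mapsto \pi$, $f(\pi)=0$), I get
\[
\THH(A/\bS[z];\Z_p) \simeq \THH(\bS_{W(k)}/\bS[z];\Z_p) \otimes_{\bS_{W(k)}} A \otimes_{?} \cdots
\]
and then reduce the base from $\bS[z]$ to $\bS_{W(k)}[z]$, where $\THH(A/\bS_{W(k)}[z];\Z_p)$ becomes a purely algebraic (Hochschild) computation: $A/\bS_{W(k)}[z]$ is the derived quotient of $W(k)[z]$ by the regular element $f$, whose Hochschild homology is an exterior algebra on a degree-$1$ class tensored appropriately, and the $p$-completion together with the Eisenstein condition collapses this to $A[x]$ with $x$ in degree $2$ (the class $x$ being the product of the exterior generator with itself is forced to vanish unless one is careful — in fact $x$ arises as a divided-power/Bockstein-type lift, which is exactly the subtlety).

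The hard part will be step (3): controlling the transition from the $k$-linear statement to the $A$-linear statement. The naive base change $\THH(A/\bS[z]) \otimes_A k \to \THH(k/\bS[z])$ is not an equivalence because $A \to k$ is not flat; the genuine input must be that $A$ is, relative to $\bS_{W(k)}[z]$, the derived quotient by a single non-zero-divisor $f$ with $f'(\pi) \neq 0$ mod the maximal ideal being \emph{not} assumed (indeed $f$ is typically Eisenstein, so $f'(\pi)$ may be highly divisible by $p$) — so one cannot argue smoothness. The right move is to compute $\THH(A/\bS_{W(k)}[z]) \simeq A \otimes_{W(k)[z] \otimes_{\bS_{W(k)}} W(k)[z]} A$ directly: the relevant $\E_\infty$-ring $W(k)[z] \otimes_{\bS_{W(k)}} W(k)[z]$ has $\THH$-Hochschild homology an exterior algebra on $\sigma z$ over $W(k)[z,z']$, and tensoring down along $f \mapsto 0$ on both sides produces a Koszul complex whose homology, after $p$-completion, is $A[x]$ — here the Eisenstein/perfect-residue-field hypotheses guarantee that the Koszul differential involves a unit times $x$ after inverting nothing but $p$-completing, so the homology is concentrated correctly. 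I would set this up as a spectral sequence (the descent/Bar spectral sequence mentioned in the introduction as foreshadowing Section \ref{section5}), degenerating for degree reasons, and verify the multiplicative structure by comparing with the $\F_p$-case via the evident map $\THH(\F_p/\bS[z]) \to \THH(A/\bS[z])$, which is the statement over $\F_p = \Z/p$ as a degenerate CDVR (uniformizer $p$, $f(z) = z$).
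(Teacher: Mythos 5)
There is a genuine gap, and it is located exactly at the step you flag as the hard part. First, you dismiss the one identification that actually drives the proof: since $\pi$ is a non-zero divisor, $k = A/\pi$ \emph{is} the derived base change $A\otimes_{\bS[z]}\bS$ along $z\mapsto 0$, so base change for relative $\THH$ gives $\THH(A/\bS[z];\Z_p)\otimes_A k \simeq \THH(A/\bS[z];\Z_p)\otimes_{\bS[z]}\bS \simeq \THH(k;\Z_p)$ --- the \emph{absolute} $\THH$ of the residue field (the base changes too), not $\THH(k/\bS[z])$. The non-flatness of $A\to k$ is irrelevant here. With this in hand the mixed-characteristic case is short: $\THH(A/\bS[z];\Z_p)$ is $p$-complete, hence $\pi$-complete since $(p)=(\pi^e)$; its reduction mod $\pi$ is $\THH_*(k)=k[x]$ (Proposition \ref{prop_perfect}), which is even, so there is no $\pi$-torsion and the answer lifts to $A[x]$. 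Your steps (1)--(2) instead compute $\THH(k/\bS[z])$ with $z$ acting by zero, which is not the relevant object and does not feed into the statement about $A$ with $z\mapsto\pi$.

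Second, the reduction in your step (3) to ``a purely algebraic (Hochschild) computation'' of $A=W(k)[z]/(f)$ over $W(k)[z]$ would produce the wrong answer. While $\THH(A/\bS_{W(k)}[z];\Z_p)\simeq\THH(A/\bS[z];\Z_p)$ does hold (Proposition \ref{prop:completenessproperties}), this spectrum is not computed by the algebraic Koszul/Tor calculation over $W(k)[z]$: that calculation yields the divided power algebra $A\langle y\rangle$ on a degree-$2$ class (exactly as in Proposition \ref{quotient} for $\Z[z]/z^k$ over $\Z[z]$), and over a $p$-complete ring no Eisenstein condition or $p$-completion converts divided powers into a polynomial algebra --- the discrepancy between $A\langle y\rangle$ and $A[x]$ is precisely the genuinely topological input (B\"okstedt periodicity for the perfect residue field), which your outline only invokes at the very end ``to verify the multiplicative structure'' but never uses in the computation itself. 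Finally, the equal-characteristic case is not covered by your sketch at all: there $p$-completeness does not give $\pi$-completeness, and one needs a separate argument, e.g.\ that $\HH(k\pow{z}/\F_p[z])$ is concentrated in degree $0$, proved in the paper via relative perfectness of Frobenius.
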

\begin{proof}
We distinguish the cases of equal and of mixed characteristic. 
In mixed characteristic we have  the equation $(p) = (\pi^e)$ where $e$ is the ramification index. We deduce that  $\THH(A / \bS[z]; \Z_p)$ is $\pi$-complete since it is $p$-complete. 
Now we have
\[
\THH(A/ \bS[z];\Z_p)\otimes_A k = \THH(A / \bS[z]; \Z_p) \otimes_{\bS[z]} \bS = \THH(k)
\]
and thus this is by Proposition \ref{prop_perfect} given by an even dimensional polynomial ring over $k$. Thus $\THH(A/ \bS[z];\Z_p)$ is $\pi$-torsion free and the result follows. 

If $A$ is of  equal characteristic $p$ then $A$ is isomorphic to the formal power series ring $k\pow{z}$ where $k$ is the residue field (which is perfect by assumption). We consider the $\E_\infty$-ring $\bS_{W(k)}\pow{z}$ obtained as the $z$-completion of $\bS_{W(k)}[z]$. Then we have an equivalence
\[
k\pow{z} \simeq \F_p \otimes_\bS \bS_{W(k)}\pow{z}
\]
which uses that $\F_p$ is of finite type over the sphere. As a result, we get an equivalence
\begin{align*}
\THH(k\pow{z} / \bS[z]) & \simeq \THH(\F_p) \otimes_\bS \THH(  \bS_{W(k)}\pow{z} / \bS[z]) \\
& \simeq \THH(\F_p) \otimes_{\F_p} ( \F_p \otimes_\bS \THH(  \bS_{W(k)}\pow{z} / \bS[z]) ) \\
& \simeq \THH(\F_p) \otimes_{\F_p} \HH( k\pow{z} / \F_p[z]) \ .
\end{align*}
Now in order to show the claim it suffices to show that $\HH( k\pow{z} / \F_p[z])$ is concentrated in degree 0 (where it is given by $k\pow{z}$). In order to prove this we first note that $\F_p[z] \to k\pow{z}$ is (derived) relatively perfect, i.e. the square
\begin{equation}\label{pushout}
\xymatrix{
\F_p[z] \ar[r] \ar[d]^\varphi & k\pow{z}\ar[d]^\varphi \\
\F_p[z] \ar[r] & k\pow{z}
}
\end{equation}
is a pushout of commutative ring spectra, where $\varphi$ is the Frobenius. This holds because $1,z,...,z^{p-1}$ is basis for $\F_p[z]$ as a $\varphi(\F_p[z]) = \F_p[z^p]$-module and also for $k\pow{z}$ as a $\varphi(k\pow{z}) = k\pow{z^p}$-algebra. Now the map
\[
\pi_i(\HH(k\pow{z} / \F_p[z]) \otimes_{\F_p[z]} \F_p[z]) \to \pi_i\HH(k\pow{z} / \F_p[z])
\]
induced from the square \eqref{pushout} is an equivalence since the square is a pushout. We claim again, as in the proof of Proposition \ref{prop_perfect}, that this map is zero for $i>0$. Since $\varphi: \F_p[z] \to \F_p[z]$ is flat, we have
\[
\pi_i\HH(k\pow{z} / \F_p[z]) \otimes_{\F_p[z]} \F_p[z]) = \pi_i \HH(k\pow{z} / \F_p[z]) \otimes_{\F_p[z]} \F_p 
\]
as right $\F_p[z]$-modules. The map
\[
\pi_i\HH(k\pow{z} / \F_p[z]) \otimes_{\F_p[z]} \F_p[z] \to \pi_i\HH(k\pow{z} / \F_p[z])
\]
is induced up from the map $\pi_i\HH(k\pow{z}/\F_p[z]) \to \pi_i\HH(k\pow{z}/\F_p[z])$ induced by the Frobenius of $k\pow{z}$, which is given by the Frobenius of the simplicial commutative ring $\HH(k\pow{z} / \F_p[z])$. Thus it is zero on positive dimensional homotopy groups.
 \end{proof}
\begin{remark}
The isomorphism $ \THH_*(A / \bS[z]; \Z_p)  \cong A[x]$ of Theorem \ref{thm:boekstedtDVR} depends on the choice of generator $x$ of $\THH_2(A / \bS[z]; \Z_p)$. The proof of Theorem \ref{thm:boekstedtDVR} determines $x$ in mixed characteristic only  modulo $\pi$. We will see later that there is in fact a preferred choice of generator $x$ which then makes the isomorphism of Theorem \ref{thm:boekstedtDVR} canonical, see  Remark \ref{rem_generator}.
\end{remark}
\begin{remark}
Let $A$ be a not necessarily complete DVR of mixed characteristic $(0,p)$ with perfect residue field. Then we have that
\[
\THH(A/\bS[z];\Z_p) \to \THH(A_p / \bS[z];\Z_p) 
\]
is an equivalence where $A_p$ is the $p$-completion of $A$. This is true for every ring $A$. But for a DVR the $p$-completion $A_p$ is the same as the completion of $A$ with respect to the maximal ideal so that Theorem \ref{thm:boekstedtDVR} applies to yield that 
\[
\THH_*(A/\bS[z];\Z_p) = A_p[x] \ .
\]
For every prime $\ell \neq p$ we have that 
\[
\THH(A / \bS[z] ; \Z_\ell) = 0
\]
since $\ell$ is invertible in $A$.
If we can show that $\THH(A / \bS[z])$ is finitely generated in each degree we can therefore even get that $\THH_*(A / \bS[z]) = A[x]$ without $p$-completion. For example if $A = \Z_{(p)}$ or more generally localizations of rings of integers at prime ideals. But in general one can not control the rational homotopy type of $\THH(A/\bS[z])$, as the example of $\Z_p$ shows, where we get contributions from $\Z_p \otimes_\Z \Z_p$.

In equal characteristic we do not know how to compute $\THH_*(A/\bS[z];\Z_p)$ if $A$ is not complete, since in general the cotangent complex $L_{A / \F_p[z]}$ does not vanish.\footnote{For an explicit counterexample consider an element $f$ in the fraction field $Q(\F_p\pow{z})$ which is transcendental over $Q(\F_p[z])$. This exists for cardinality reasons. Now the cotangent complex $L_{Q(\F_p[z])(f) / Q(\F_p[z])}$ is nontrivial. Since it agrees with a localisation of $L_{A/\F_p[z]}$, where $A = \F_p\pow{z} \cap Q(\F_p[z])(f)$, $A$ is a DVR with nontrivial $L_{A/\F_p[z]}$.}
\end{remark}

\begin{remark}
One can also deduce the mixed characteristic version of  Theorem \ref{thm:boekstedtDVR} from an analogue of Theorem \ref{thm:free} which under the same assumptions as Theorem \ref{thm:boekstedtDVR} and in mixed characteristic states that $A\otimes_{\bS_{W(k)}[z]} A$ is $p$-adically the free $\E_2$-algebra on a single generator in degree 1.
\end{remark}

We also want to remark that there are some equivalent ways of stating Theorem  \ref{thm:boekstedtDVR} which might be a bit more canonical from a certain point of view.

\begin{prop}
\label{prop:completenessproperties}
In the situation of Theorem \ref{thm:boekstedtDVR} the map $\bS[z] \to A$ extends to a map $\bS_{W(k)}\pow{z} \to A$ by completeness of $A$. The induced canonical maps
\[
\xymatrix{
\THH(A / \bS[z];\Z_p) \ar[r]^\simeq\ar[d]^\simeq &  \THH(A / \bS\pow{z};\Z_p) \ar[d]_\simeq \\  \THH(A / \bS_{W(k)}[z]; \Z_p)\ar[r]^\simeq  &  \THH(A / \bS_{W(k)}\pow{z}; \Z_p) \\
\THH(A / \bS_{W(k)}[z]) \ar[u]_\simeq\ar[r]^\simeq &  \THH(A / \bS_{W(k)}\pow{z}) \ar[u]^\simeq
}
\]
are all equivalences. 
\end{prop}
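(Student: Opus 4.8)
The main tools are the base–change equivalence $\THH(A/S')\simeq \THH(A/S)\otimes_{\THH(S'/S)}S'$ for a tower $S\to S'\to A$ (a formal consequence of the equivalences recorded in the conventions) and the input $\THH(\bS_{W(k)};\Z_p)\simeq \bS_{W(k)}$ of Remark \ref{rem_padic}. The first observation is purely formal: in a commuting square in which both verticals and one horizontal are equivalences, the second horizontal is one too. Applying this to the upper square it suffices to check that the left vertical is an equivalence, that both maps $\THH(A/R;\Z_p)\to \THH(A/R\pow z;\Z_p)$ for $R\in\{\bS[z],\bS_{W(k)}[z]\}$ are equivalences, and that the two $p$-completion maps in the lower square are equivalences; the remaining right vertical of the upper square and the bottom horizontal then follow automatically. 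We treat mixed and equal characteristic separately.

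In mixed characteristic, for the horizontal maps I would use that $R\pow z\otimes_R R\pow z\to R\pow z$ is a $z$-adic equivalence (idempotency of completion), hence so is $\THH(R\pow z/R)=R\pow z\otimes_{R\pow z\otimes_R R\pow z}R\pow z\to R\pow z$; base change then gives $\THH(A/R\pow z)^{\wedge}_z\simeq \THH(A/R)^{\wedge}_z$, and since all these spectra are $A$-modules on which $z$ acts as $\pi$ and $p=u\pi^e$, derived $z$-completion agrees with derived $p$-completion, so $\THH(A/R\pow z;\Z_p)\simeq \THH(A/R;\Z_p)$. For the left vertical I would write $\THH(\bS_{W(k)}[z]/\bS[z])\simeq \THH(\bS_{W(k)})\otimes_\bS\bS[z]$; by Remark \ref{rem_padic} the fiber of $\THH(\bS_{W(k)})\to\bS_{W(k)}$ has $p$ acting invertibly, a property preserved by $-\otimes_\bS\bS[z]$, so $\THH(\bS_{W(k)}[z]/\bS[z])\to\bS_{W(k)}[z]$ is a $p$-adic equivalence, and $p$-completing $\THH(A/\bS_{W(k)}[z])=\THH(A/\bS[z])\otimes_{\THH(\bS_{W(k)}[z]/\bS[z])}\bS_{W(k)}[z]$ yields the claim.

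The one genuinely substantive point is the lower square, i.e. that $\THH(A/\bS_{W(k)}[z])$ and $\THH(A/\bS_{W(k)}\pow z)$ are already $p$-complete in mixed characteristic. For the former, the skeletal filtration of the cyclic bar construction equips $\THH(A/\bS_{W(k)}[z])$ with an exhaustive filtration whose $n$-th graded piece is $\Sigma^n\,\cofib\!\big(A\to A\otimes_{\bS_{W(k)}[z]}A\big)^{\otimes_A n}$, which is $2n$-connective, so only finitely many graded pieces contribute in each degree. Factoring $\bS_{W(k)}[z]\to W(k)[z]\to A$ and using that $\bS_{W(k)}[z]$ has homotopy groups finitely generated over $\pi_0=W(k)[z]$ in each degree, while $A$ is a perfect $W(k)[z]$-module (resolved by $W(k)[z]\xrightarrow{g}W(k)[z]$ with $g$ the minimal polynomial of $\pi$), one sees after base-changing twice that $A\otimes_{\bS_{W(k)}[z]}A$ has homotopy finitely generated over $W(k)[z]$ in each degree and annihilated by $g$, hence finitely generated over $A$. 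Therefore $\THH_*(A/\bS_{W(k)}[z])$ is connective and degreewise finitely generated over the complete Noetherian local ring $A$, hence $p$-complete; the identical argument (now factoring through $W(k)\pow z$, over which $A$ is still perfect by Weierstrass preparation) handles $\bS_{W(k)}\pow z$. I expect this finiteness step to be the main obstacle — and it genuinely uses the enlarged base, since over $\bS[z]$ the statement fails: $\THH(A/\bS[z])$ rationalizes to $\HH(\mathrm{Frac}(A)/\Q[z])$, which is large.

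Finally, in equal characteristic, write $A=k\pow z$; for each of the four base rings $S$ one has $\F_p\otimes_\bS S\in\{\F_p[z],\F_p\pow z,k[z],k\pow z\}$ (using finiteness of $\F_p$ over $\bS$ for the completed cases, exactly as in the proof of Theorem \ref{thm:boekstedtDVR}) and, running that proof mutatis mutandis, $\THH(A/S)\simeq \THH(\F_p)\otimes_{\F_p}\HH\big(A/(\F_p\otimes_\bS S)\big)$. The relatively perfect Frobenius argument used there applies verbatim to each of $\F_p[z]\to k\pow z$, $\F_p\pow z\to k\pow z$, $k[z]\to k\pow z$, showing the relative Hochschild homology is $k\pow z$ concentrated in degree $0$. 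Hence every corner of the diagram is $\THH(\F_p)\otimes_{\F_p}k\pow z$, all comparison maps are maps of $k\pow z$-algebras carrying the Bökstedt class to the Bökstedt class, hence equivalences, and each spectrum is a $k$-module, hence automatically derived $p$-complete, which settles the $p$-completion maps in this case.
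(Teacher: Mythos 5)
Your proof is correct, but it differs from the paper's in two ways worth recording. The paper treats the four upper equivalences uniformly in both characteristics, by showing that the relative terms $\THH(\bS\pow{z}/\bS[z])$, $\THH(\bS_{W(k)}[z]/\bS[z])$, $\THH(\bS_{W(k)}\pow{z}/\bS_{W(k)}[z])$ and $\THH(\bS_{W(k)}\pow{z}/\bS\pow{z})$ are $p$-adically trivial, checked on $\F_p$-homology via Remark \ref{rem_padic} and the relatively perfect Frobenius argument from the proof of Theorem \ref{thm:boekstedtDVR}, and then disposes of the lower square by quoting Lemma \ref{lem_complete} with the presentation $A\cong W(k)[z]/\phi\cong W(k)\pow{z}/\phi$. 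You instead handle the ``add a power-series completion'' horizontals by $z$-adic idempotency of completion together with the coincidence of $z$-, $\pi$- and $p$-completion on $A$-modules; this is a clean alternative that avoids the Frobenius input for those maps, but it is intrinsically a mixed-characteristic argument, which is what forces your separate equal-characteristic case (resolved correctly by identifying every corner with $\THH(\F_p)\otimes_{\F_p}k\pow{z}$, close in spirit to the paper's uniform $\F_p$-homology checks). For the lower square you essentially reprove the relevant case of Lemma \ref{lem_complete}: the skeletal filtration plus degreewise finite generation over the complete Noetherian local ring $A$ is the same mechanism as the paper's ``finite type'' lemma, with the finiteness made explicit, and you are right that this is the one step where the enlarged base $\bS_{W(k)}[z]$ resp. $\bS_{W(k)}\pow{z}$ is essential. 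Two minor points to tighten: the associated graded of the skeletal filtration is more precisely $\Sigma^n\, A\otimes_R \overline{A}^{\otimes_R n}$ with $\overline{A}=\cofib(R\to A)$ (giving the same $2n$-connectivity and finiteness), and your ``base-changing twice'' finiteness claim is most efficiently packaged as: $A$ is of finite type (almost perfect) over $\bS_{W(k)}[z]$ and over $\bS_{W(k)}\pow{z}$, hence so is each tensor power over $A$ --- which is exactly what Lemma \ref{lem_complete} axiomatizes, so you could simply cite it and shorten this step.
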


\begin{proof}
For the upper four maps this follows from the equivalences
\begin{align*}
&\THH(\bS\pow{z} / \bS[z];\Z_p) \simeq \bS\pow{z}^\wedge_p  \\
&\THH(\bS_{W(k)}[z] / \bS[z];\Z_p) \simeq \bS_{W(k)}[z]^\wedge_p \\
&\THH(\bS_{W(k)}\pow{z} / \bS_{W(k)}[z];\Z_p) \simeq \bS_{W(k)}\pow{z} \\
&\THH(\bS_{W(k)}\pow{z} / \bS\pow{z}];\Z_p) \simeq \bS_{W(k)}\pow{z}
\end{align*}
which can all be checked in $\F_p$-homology (see Remark \ref{rem_padic} and the proof of Theorem \ref{thm:boekstedtDVR}).
The last two vertical equivalences follows since $\THH(A / \bS_{W(k)}[z])$ and $\THH(A / \bS_{W(k)}\pow{z})$ are already $p$-complete. If $A$ is of equal characteristic this is clear anyhow (and in the whole diagram we did not need the $p$-completions). In mixed characteristic this follows from Lemma \ref{lem_complete} below, since $A$ is of finite type over $\bS_{W(k)}[z]$ and over $\bS_{W(k)}\pow{z}$, which can be seen by the presentation 
\[
A \cong W(k)[z]/\phi(z) \cong W(k)\pow{z}/ \phi(z) .
\]  
where $\phi$ is the minimal polynomial of the uniformizer $\pi$. 
\end{proof}

Recall that a connective ring spectrum $A$ over a connective, commutative ring spectrum $S$  is said to be of \emph{finite type} if $A$ is as an $R$-module a filtered colimit of perfect modules along increasingly connective maps (i.e. has a cell structure with finite `skeleta'). 
\begin{lem}\label{lem_complete}
If $A$ is $p$-complete and of finite type over $R$ then $\THH(A / R)$ is also $p$-complete $\THH(A/R)$. 
\end{lem}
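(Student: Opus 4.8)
The plan is to reduce the statement to the claim that $p$-completion commutes with $\THH$ in enough generality, and then to leverage the finite-type hypothesis to control this. First I would recall that $\THH(A/R) = A \otimes_{A \otimes_R A} A$, and that everything in sight is a module over $A$, which is $p$-complete. Since $p$-completion of a module is the same as $p$-completion as a spectrum, it suffices to show $\THH(A/R)$ is already derived $p$-complete as a spectrum; equivalently, that $\THH(A/R) \otimes_{\bS} \bS/p$ together with its higher derived $p$-torsion behaves correctly, i.e. that $\THH(A/R)$ has bounded $p^\infty$-torsion in each degree, or more robustly that the canonical map $\THH(A/R) \to \THH(A/R)^\wedge_p$ is an equivalence.

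The key idea is that the finite-type hypothesis lets us write $A$ as a filtered colimit $\colim_n A_n$ of perfect $R$-modules along increasingly connective maps, but more usefully that as an $R$-\emph{algebra} (or at least compatibly with the algebra structure) $A$ is built from $R$ by attaching cells, with the $n$-skeleton $A_n$ a perfect $R$-module and $\mathrm{cofib}(A_n \to A)$ becoming increasingly connective. Then I would use that $\THH(-/R)$ preserves filtered colimits and that for a perfect $R$-module the relevant constructions are well-behaved: the point is that $p$-completion commutes with finite colimits and with the formation of $\THH$ of something perfect over $R$ because everything reduces to finitely many cells. So one shows $\THH(A_n/R)$ (suitably interpreted, e.g. passing through $A \otimes_R A$) is $p$-complete whenever $A$ is, and then that the colimit over $n$, which is a colimit along increasingly connective maps, preserves $p$-completeness — here one uses that a filtered colimit along increasingly connective maps of $p$-complete spectra, with connectivity of the maps going to infinity, is again $p$-complete, since in each fixed degree the colimit stabilizes.

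Concretely, the steps in order: (1) reduce to showing $\THH(A/R)$ is derived $p$-complete as a spectrum, using that it is an $A$-module and $A$ is $p$-complete; (2) write $A = \colim_n A_n$ with $A_n$ perfect over $R$ and $A_n \to A_{n+1}$ increasingly connective, and observe $\THH(A/R) = A \otimes_{A\otimes_R A} A$ where $A \otimes_R A = \colim_n A_n \otimes_R A$, so that $\THH(A/R)$ is a filtered colimit, along increasingly connective maps, of the spectra $A \otimes_{A_n \otimes_R A} A$; (3) show each such term is $p$-complete — here each $A_n \otimes_R A$ is a perfect module over the $p$-complete ring $A$ (as $A_n$ is perfect over $R$), hence $p$-complete, and then $A \otimes_{A_n \otimes_R A} A$ is obtained from $p$-complete $A$-modules by finitely many (co)limits, hence $p$-complete; (4) conclude that the filtered colimit along increasingly connective maps of $p$-complete spectra is $p$-complete, since $p$-complete spectra are closed under colimits indexed so that homotopy groups stabilize degreewise, and more directly since a connective colimit of $p$-complete connective spectra along highly-connected maps is $p$-complete (the $p$-completion functor is exact and commutes with such colimits on connective spectra).

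The main obstacle I anticipate is step (3)–(4): being careful that ``$p$-complete spectra are closed under filtered colimits along increasingly connective maps'' is used correctly, since $p$-completion does \emph{not} commute with arbitrary filtered colimits. One has to exploit that the maps $A_n \to A$ (and hence the induced maps on the $\THH$-like terms) have connectivity tending to infinity, so that in each homotopy degree the colimit is eventually constant, where $p$-completeness is manifest. A secondary subtlety is making the cell-structure argument interact cleanly with the two-sided bar / tensor description of $\THH$ rather than with a naive module-level filtration; it is cleanest to phrase it as: $\THH(A/R) = A \otimes_{A \otimes_R A} A$, note $A \otimes_R A$ is of finite type over $A$ (base change of finite type), so it suffices to know that for $B$ of finite type over a $p$-complete $A$, and $M, N$ finite-type $B$-modules, $M \otimes_B N$ is $p$-complete — which again reduces to the perfect case plus the connectivity argument.
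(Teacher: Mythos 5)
Your general toolkit is the right one (perfect modules over a $p$-complete ring are $p$-complete; a colimit of bounded-below $p$-complete spectra along increasingly connective maps is again $p$-complete), but the central step of your argument does not work as written. In step (2) the skeleta $A_n$ coming from the finite-type hypothesis are only $R$-\emph{modules}, not $R$-algebras, so $A_n\otimes_R A$ is not a ring and the expression $A\otimes_{A_n\otimes_R A}A$ is meaningless; a cell filtration of the base of a relative tensor product cannot be fed into $A\otimes_{(-)}A$ in this way. Relatedly, in step (3) the assertion that such a term is ``obtained from $p$-complete $A$-modules by finitely many (co)limits'' is exactly the point at issue: a relative tensor product $A\otimes_B A$ is the geometric realization of a bar construction, i.e.\ an \emph{infinite} colimit, and the whole content of the lemma is to control this. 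Your fallback formulation at the end has a gap of the same nature: to apply ``$M\otimes_B N$ is $p$-complete for finite-type $B$-modules'' with $B=A\otimes_R A$ and $M=N=A$, you would need $A$ to be of finite type as a module over $A\otimes_R A$, which is neither proved nor a formal consequence of $A$ being of finite type over $R$ (it would require Noetherian/coherence-type control on homotopy groups to run the cell-attachment induction), and is in any case a stronger input than the lemma requires.

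The missing idea is how the paper tames the infinite colimit: instead of $A\otimes_{A\otimes_R A}A$, use the cyclic bar construction whose $m$-th term is $A^{\otimes_R(m+1)}$. One shows inductively that each tensor power $A\otimes_R\cdots\otimes_R A$ is of finite type over $A$ (acting through one factor), hence $p$-complete since $A$ is; then one uses that $\tau_{\leq n}\THH(A/R)$ agrees with $\tau_{\leq n}$ of the realization of the restriction to $\Delta^{\mathrm{op}}_{\leq n+1}$, which is a \emph{finite} colimit of $p$-complete spectra. Thus every homotopy group of the bounded-below spectrum $\THH(A/R)$ is derived $p$-complete, and $\THH(A/R)$ is $p$-complete. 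Your connectivity principle from step (4) is welcome as an ingredient (e.g.\ for ``finite type over $p$-complete implies $p$-complete''), but it cannot substitute for this skeletal/truncation argument, which is what actually reduces the realization to finitely many $p$-complete stages in each degree.
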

\begin{proof}
We first observe that all tensor products $A \otimes_R ... \otimes_R A$ are of finite type over $A$ (say by action from the right) which follows inductively.  Thus they are $p$-complete. Finally, the $n$-truncation of $\THH(A / R)$ is equivalent to the $n$-truncation of the restriction of the cyclic Bar construction to $\Delta^{\mathrm{op}}_{\leq n+1}$. This colimit is finite and the stages are $p$-complete by the above.
\end{proof}

We now consider quotients $A'$ of a CDVR $A$ as in Theorem \ref{thm:boekstedtDVR}. Every ideal is of the form $(\pi^k) \subseteq A$ and thus $A' = A/\pi^k$ for some $k \geq 1$.

\begin{prop}\label{quotient}
In the situation above we have a canonical equivalence
\[
\THH(A' / \bS[z]) \simeq \THH(A / \bS[z]; \Z_p) \otimes_{\Z[z]} \HH\big( (\Z[z]/z^k)\,  / \Z[z]\big)
\]
and on homotopy groups we get 
\[
\THH_*(A' /\bS[z]) = A'[x]\langle y \rangle
\]
where $y$ is a divided power generator in degree 2.
\end{prop}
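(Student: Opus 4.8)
The plan is to express $\THH(A'/\bS[z])$ as a base change of $\THH(A/\bS[z])$ along the quotient map $\Z[z] \to \Z[z]/z^k$, the point being that everything in sight is linear over $\bS[z]$ and hence, after passing to homotopy, over $\Z[z]$. First I would use the general identity $\THH(R/S) = \THH(R) \otimes_{\THH(S)} S$ together with $A' = A \otimes_{\Z[z]} (\Z[z]/z^k)$ (note $z^k \mapsto \pi^k$, and $A/\pi^k$ really is this derived tensor product since $\pi$ is a nonzerodivisor, so $z^k$ acts as a nonzerodivisor on $A$). This gives
\[
\THH(A'/\bS[z]) \simeq \THH(A/\bS[z]) \otimes_{\THH(\bS[z]/\bS[z])} \cdots
\]
— more efficiently, I would argue directly that relative $\THH$ over $\bS[z]$ is a module over $\THH(\bS[z]/\bS[z]) = \bS[z]$, hence over $\Z[z]$ after linearization, and that $A' \mapsto \THH(A'/\bS[z])$ takes the pushout $A' = A \otimes_{\Z[z]}^{\mathbb{L}} (\Z[z]/z^k)$ of $\Z[z]$-algebras (with $A$ regarded over $\bS[z]$) to the corresponding relative tensor product, yielding
\[
\THH(A'/\bS[z]) \simeq \THH(A/\bS[z]) \otimes_{\Z[z]} \HH\big((\Z[z]/z^k)/\Z[z]\big).
\]
Here I would be careful that $\HH((\Z[z]/z^k)/\Z[z]) = (\Z[z]/z^k) \otimes_{\Z[z] \otimes_\Z \Z[z]} \cdots$ enters precisely because base-changing $\THH(A/\bS[z])$ along $\Z[z] \to \Z[z]/z^k$ produces the Hochschild homology of $\Z[z]/z^k$ relative to $\Z[z]$ tensored on. The $p$-completion subscript enters because one must invoke Theorem~\ref{thm:boekstedtDVR} to know $\THH_*(A/\bS[z];\Z_p) = A[x]$; away from $p$ everything relevant vanishes (or is controlled) as in the remarks following that theorem, so $\THH(A/\bS[z];\Z_p)$ is the correct input and the displayed formula should be stated with that completion, exactly as written.

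Next I would compute $\HH_*((\Z[z]/z^k)/\Z[z])$. This is a standard (truncated polynomial) Hochschild homology computation: $\Z[z]/z^k$ is, as a $\Z[z]$-algebra, $\Z[z][w]/(w^k - \text{stuff})$ — more simply one resolves $\Z[z]/z^k$ over $\Z[z]$ by the Koszul-type periodic complex $\cdots \to \Z[z] \xrightarrow{z^k} \Z[z] \xrightarrow{kz^{k-1}} \Z[z] \xrightarrow{z^k} \Z[z]$, and feeds this into the bar/Hochschild complex. The outcome is that $\HH_*((\Z[z]/z^k)/\Z[z])$ is a free divided-power algebra $(\Z[z]/z^k)\langle y\rangle$ on a generator $y$ in degree $2$, with possibly a class in degree $1$ — but the degree-$1$ class $dz$ is killed because $z$ is (the image of) a polynomial generator, so the answer is the divided power algebra $\Gamma_{\Z[z]/z^k}[y]$ concentrated in even degrees. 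I would verify this directly from the two-periodic resolution: the failure of the multiplication-by-$z^k$ and multiplication-by-$kz^{k-1}$ maps to be injective or surjective is exactly recorded by divided powers. Combining with Theorem~\ref{thm:boekstedtDVR},
\[
\THH_*(A'/\bS[z]) = \big(A[x] \otimes_{\Z[z]} \Z[z]/z^k\big)\langle y\rangle = A'[x]\langle y\rangle,
\]
where the flatness of $A[x]$ over $\Z[z]$ in each degree (it is free) ensures there is no $\Tor$ and the base change is computed degreewise, giving $A' = A/\pi^k$ in degree $0$.

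The main obstacle I anticipate is not the algebra but making the first step — the base-change identity at the level of spectra — genuinely clean and canonical, including the bookkeeping of the $p$-completion. One must check that the equivalence $\THH(A'/\bS[z]) \simeq \THH(A/\bS[z];\Z_p) \otimes_{\Z[z]} \HH((\Z[z]/z^k)/\Z[z])$ is compatible with the multiplicative structure and that the right-hand side is already $p$-complete (so that inserting the completion on the $A$-factor does no harm) — this uses that $\HH((\Z[z]/z^k)/\Z[z])$ is bounded and degreewise finitely generated, together with Lemma~\ref{lem_complete}-type reasoning, since $A'$ is of finite type over $\bS[z]$. A secondary subtlety is pinning down the divided-power (as opposed to polynomial or exterior-times-polynomial) structure on $\HH_*$, which requires genuinely computing with the periodic resolution rather than quoting a formula; but this is a well-understood calculation and I would only sketch it, referring to the structure of Hochschild homology of truncated polynomial rings. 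With these in hand the proposition follows, and the divided-power generator $y$ is the image of the canonical degree-$2$ generator of $\HH_*((\Z[z]/z^k)/\Z[z])$.
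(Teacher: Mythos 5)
There is a genuine gap in your first step, and it is exactly the point you flagged as the ``main obstacle'' but did not resolve. The functor $\THH(-/\bS[z])$ is symmetric monoidal with respect to $\otimes_{\bS[z]}$, so it converts pushouts of $\bS[z]$-algebras into relative tensor products; it does \emph{not} convert the pushout $A' \simeq A\otimes^{\mathbb{L}}_{\Z[z]}(\Z[z]/z^k)$ of $\Z[z]$-algebras into a tensor product over $\Z[z]$. What multiplicativity actually gives from the $\Z[z]$-level pushout is
\[
\THH(A'/\bS[z]) \simeq \THH(A/\bS[z]) \otimes_{\THH(\Z[z]/\bS[z])} \THH\big((\Z[z]/z^k)/\bS[z]\big),
\]
and $\THH(\Z[z]/\bS[z]) \simeq \THH(\Z)[z]$ is \emph{not} $\Z[z]$ (e.g.\ $\THH_3(\Z)=\Z/2$). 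Replacing the base $\THH(\Z)[z]$ by $\Z[z]$ and the second factor by $\HH\big((\Z[z]/z^k)/\Z[z]\big)$ is not formal: it would require knowing that the $\THH(\Z)[z]$-module structure on $\THH(A/\bS[z])$ is restricted from $\Z[z]$, resp.\ that $\THH\big((\Z[z]/z^k)/\bS[z]\big)$ is induced up from $\HH\big((\Z[z]/z^k)/\Z[z]\big)$ — statements essentially equivalent to what you are trying to prove. The missing idea (and the paper's key move) is to lift the quotient to the sphere: since $\pi$ is a nonzerodivisor, $A' \simeq A\otimes_{\bS[z]}(\bS[z]/z^k)$ where $\bS[z]/z^k$ is the suspension spectrum of the pointed monoid $\mathbb{N}/[k,\infty)$. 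Then multiplicativity over $\bS[z]$ gives $\THH(A'/\bS[z]) \simeq \THH(A/\bS[z])\otimes_{\bS[z]}\THH\big((\bS[z]/z^k)/\bS[z]\big)$, and smashing the right factor with $\Z$ (legitimate because $\THH(A/\bS[z])$ is a $\Z$-module) produces exactly $\otimes_{\Z[z]}\HH\big((\Z[z]/z^k)/\Z[z]\big)$. Also note the $p$-completion step is simpler than your finite-type argument: $p$ is nilpotent in $A'$, so both sides are $A'$-modules and hence already $p$-complete, and the comparison map is a $p$-adic equivalence.

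Your computation of $\HH_*\big((\Z[z]/z^k)/\Z[z]\big)$ also goes through an incorrect device. The two-periodic complex with differentials alternating between $z^k$ and $kz^{k-1}$ is not a resolution over $\Z[z]$ (it is not even a complex of $\Z[z]$-modules, since $kz^{k-1}\cdot z^k\neq 0$); what it resembles is the standard periodic complex computing Hochschild homology of the truncated polynomial ring \emph{relative to $\Z$}, whose answer involves $\mathrm{ann}(kz^{k-1})$ and $R/(kz^{k-1})$ in positive degrees and is neither purely even nor free — so carried out as written, your method would give the wrong groups. The correct (derived) computation, as in the paper, uses that $\Z[z]/z^k \otimes^{\mathbb{L}}_{\Z[z]} \Z[z]/z^k$ is the exterior algebra $\Lambda_{\Z[z]/z^k}(e)$ with $|e|=1$ (a Koszul complex on the single regular element $z^k$), and then
\[
\HH\big((\Z[z]/z^k)/\Z[z]\big) \simeq \Tor^{\Lambda_{\Z[z]/z^k}(e)}_*\big(\Z[z]/z^k,\Z[z]/z^k\big) = (\Z[z]/z^k)\langle y\rangle , \qquad |y|=2 ,
\]
with no degree-one class at all (relative to $\Z[z]$ one has $dz=0$ since $z$ comes from the base, so there is nothing to ``kill''). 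Your final Künneth step is fine in spirit, but the justification should be that $z^k$ acts as the nonzerodivisor $\pi^k$ on $A$, so $\Tor^{\Z[z]}_{>0}\big(A,\Z[z]/z^k\big)=0$; $A$ itself is in general neither free nor flat over $\Z[z]$.
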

\begin{proof}
Since $\pi$ is a non-zero divisor we can write $A' = A \otimes_{\bS[z]} (\bS[z]/z^k)$ where $\bS[z] / z^k$ is the reduced suspension spectrum of the pointed monoid $\mathbb{N} / [k, \infty)$. Thus we find 
\begin{align*}
\THH(A' / \bS[z])  & \simeq \THH(A / \bS[z]) \otimes_{\bS[z]} \THH( (\bS[z]/z^k) / \bS[z])  \\
& \simeq \THH(A / \bS[z]) \otimes_{\Z[z]} \Big(\Z \otimes_\bS \THH\big( (\bS[z]/z^k) / \bS[z]\big) \Big) \\
& \simeq \THH(A / \bS[z]) \otimes_{\Z[z]}  \HH\big( (\Z[z]/z^k)\,  / \Z[z]\big)\\
& \simeq \THH(A / \bS[z]; \Z_p) \otimes_{\Z[z]}  \HH\big( (\Z[z]/z^k)\,  / \Z[z]\big)
\end{align*}
where in the last step we have used that $p$ is nilpotent in $A'$ and thus we are already $p$-complete. Finally  $\HH\big( (\Z[z]/z^k)\,  / \Z[z]\big)$ is given by a divided power algebra $(\Z[z]/z^k)\langle y \rangle$. To see this we first observe that $\Z[z]/z^k \otimes_{\Z[z]} \Z[z]/z^k$ is given by the exterior algebra
$\Lambda_{\Z[z]/z^k}(e)$ with $e$ in degree $1$. Then it follows that $\HH\big( (\Z[z]/z^k)\,  / \Z[z]\big)$, which is the Bar construction on that,  is given by 
\[
\Tor_*^{\Lambda_{\Z[z]/z^k}(e)} \big(\Z[z]/z^k, \Z[z]/z^k\big)  = (\Z[z]/z^k)\langle y \rangle \ .
\] 
This implies the claim.
\end{proof}

\section{Absolute $\THH$ for CDVRs}\label{section5}

For $A$ a CDVR with perfect residue field of characteristic $p$ we have computed $\THH$ relative to $\bS[z]$. In order to compute the absolute $\THH$ we are going to employ a spectral sequence which works very generally (see Proposition \ref{prop:spectralsequencegeneral}).

\begin{prop}
\label{prop:ssconstruction}
For every commutative algebra $A$ (over $\Z$) with an element $\pi \in A$ considered as a $\bS[z]$-algebra
there is a multiplicative, convergent spectral sequence
\[
\THH_*(A / \bS[z]; \Z_p) \otimes_{\Z[z]} \Omega^*_{\Z[z] / \Z} \Rightarrow \THH_*(A; \Z_p).
\]
\end{prop}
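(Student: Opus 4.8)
The plan is to obtain the spectral sequence from a filtration of $\THH(A)$ built out of $\THH(A/\bS[z])$ together with $\THH(\bS[z])$. The starting point is the base-change formula $\THH(A) \simeq \THH(A/\bS[z]) \otimes_{\THH(\bS[z])} \bS[z]$, which is a special case of the general identity $\THH(R/S) = \THH(R)\otimes_{\THH(S)} S$ recorded in the conventions together with the fact that $\THH$ of a polynomial ring base-changes correctly. Dually, one can write $\THH(A)$ as the tensor of $\THH(A/\bS[z])$ against $\bS[z]$ over $\THH(\bS[z])$; after $p$-completion we may replace $\THH(\bS[z];\Z_p)$ by its known value. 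The key computational input is that $\THH(\bS[z]) = \bS[z] \otimes_\bS \THH(\bS) \otimes \ldots$, more precisely $\THH(\bS[M]) = \bS[\THH_{\mathrm{cyc}}(M)]$ for a monoid $M$, so that $\THH(\bS[\mathbb N]) \simeq \bS[z] \otimes_{\bS} \Sigma^\infty_+(\text{cyclic bar of }\mathbb N)$; concretely $\THH(\bS[z])$ is, relative to $\bS[z]$ acting on one side, the free $\E_1$-$\bS[z]$-module generated by a circle's worth of cells, giving $\pi_*$ of $\bS[z]\otimes_\bS\THH(\bS[z]/\bS[z]\wedge\bS[z])$ the shape of a free divided-power-type resolution whose associated graded is $\Omega^*_{\Z[z]/\Z}$.

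More concretely, I would run the argument rationalized over $\Z$ rather than $\bS$: the canonical map $\bS[z]\to \Z[z]$ and the fact that $\THH(A;\Z_p)$ only depends on the $p$-complete structure let us replace $\THH(\bS[z])$ by $\HH(\Z[z]/\Z)$ after tensoring down to $\Z$, and $\HH_*(\Z[z]/\Z) = \Omega^*_{\Z[z]/\Z} = \Z[z]\oplus \Z[z]\,dz$ is concentrated in homological degrees $0$ and $1$ with $\HH_1 = \Omega^1_{\Z[z]/\Z}$ free of rank one on $dz$. Thus $\HH(\Z[z]/\Z)$ is a square-zero extension $\Z[z]\oplus \Sigma\,\Omega^1_{\Z[z]/\Z}$, and the cofiber sequence $\Sigma\,\Omega^1_{\Z[z]/\Z} \to \HH(\Z[z]/\Z) \to \Z[z]$ of $\Z[z]$-modules induces, after base change along $\HH(\Z[z]/\Z)\to \Z[z]$ and using $\THH(A;\Z_p) \simeq \THH(A/\bS[z];\Z_p)\otimes_{\HH(\Z[z]/\Z)} \Z[z]$, a Postnikov-type tower whose filtration quotients are the derived tensor products $\THH(A/\bS[z];\Z_p)\otimes_{\Z[z]} \Omega^n_{\Z[z]/\Z}[n]$ for $n = 0,1$ (higher $n$ vanish since $\Omega^*_{\Z[z]/\Z}$ has projective dimension $1$, but I'll phrase it so the general formula still reads $\bigoplus_n \Omega^n$). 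Writing out the associated spectral sequence of this filtration gives exactly $E_1 = \THH_*(A/\bS[z];\Z_p)\otimes_{\Z[z]}\Omega^*_{\Z[z]/\Z} \Rightarrow \THH_*(A;\Z_p)$.

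For the formal packaging I would invoke the general machine of Proposition \ref{prop:spectralsequencegeneral} (referenced forward in the text): given a map of $\E_\infty$-rings $B\to C$ with $C$ a $B$-module admitting a finite filtration with associated graded a sum of shifts of free $C$-modules indexed by $\Omega^*$, base change produces the desired multiplicative, convergent spectral sequence computing $\pi_*(M\otimes_B C)$ for any $B$-module $M$. Here $B = \HH(\Z[z]/\Z)$ (the $p$-completed image of $\THH(\bS[z])$), $C = \Z[z]$, and $M = \THH(A/\bS[z];\Z_p)$. Multiplicativity comes from the fact that all the maps involved are maps of $\E_\infty$- (or at least $\E_1$-) algebras and the filtration is by ideals; convergence is automatic since the filtration is finite (length $2$, as $\Omega^{\geq 2}_{\Z[z]/\Z} = 0$) and everything is bounded below and $p$-complete.

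\emph{Main obstacle.} The subtle point is the identification of $\THH(\bS[z])$, or rather its behavior after tensoring to $\Z$ and $p$-completing: one needs that $\Z\otimes_\bS \THH(\bS[z];\Z_p)$ really is $\HH(\Z[z]/\Z)$ and in particular is $1$-truncated with $\HH_1 = \Omega^1_{\Z[z]/\Z}$, and that the base-change formula $\THH(A;\Z_p)\simeq \THH(A/\bS[z];\Z_p)\otimes_{\THH(\bS[z];\Z_p)}\bS[z]^\wedge_p$ is valid and compatible with the relevant module structures (so that one may legitimately replace the base $\THH(\bS[z];\Z_p)$ by $\HH(\Z[z]/\Z)$ — this uses that $\THH(A/\bS[z];\Z_p)$ is a $\Z$-module, being a module over $\THH(A;\Z_p)$ which receives $\Z = \THH(\Z)/(\text{something})$... more precisely because $A'$-module structure forces linearity over $\pi_0$). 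Once these structural facts are in place, the construction of the filtration and the spectral sequence is formal; verifying multiplicativity and the precise $E_1$-term (including that the $\Omega^n$ for $n\geq 2$ contribute nothing, so the spectral sequence in fact degenerates to a two-column object) is routine given the projective dimension $1$ of $\Omega^*_{\Z[z]/\Z}$.
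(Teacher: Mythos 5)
You have the right ingredients in view (the base $\HH(\Z[z]/\Z)=\THH(\bS[z])\otimes_\bS\Z$, the fact that it is $1$-truncated with $\HH_*(\Z[z]/\Z)=\Omega^*_{\Z[z]/\Z}$, a two-step Postnikov/Whitehead filtration, and graded pieces of the form $\THH(A/\bS[z];\Z_p)\otimes_{\Z[z]}\Omega^n_{\Z[z]/\Z}[n]$), but the central identity your construction rests on is stated in the wrong direction, and this is not a cosmetic slip: the formula $\THH(R/S)\simeq\THH(R)\otimes_{\THH(S)}S$ specializes to
\[
\THH(A/\bS[z])\;\simeq\;\THH(A)\otimes_{\THH(\bS[z])}\bS[z],
\]
i.e.\ the \emph{relative} THH is a base change of the \emph{absolute} one, not vice versa. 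Your claimed equivalences $\THH(A)\simeq\THH(A/\bS[z])\otimes_{\THH(\bS[z])}\bS[z]$ and $\THH(A;\Z_p)\simeq\THH(A/\bS[z];\Z_p)\otimes_{\HH(\Z[z]/\Z)}\Z[z]$ are false, and your formal packaging with $B=\HH(\Z[z]/\Z)$, $C=\Z[z]$, $M=\THH(A/\bS[z];\Z_p)$ therefore computes the wrong abutment: since the $\HH(\Z[z]/\Z)$-module structure on $M$ factors through the augmentation to $\Z[z]$, one has $M\otimes_B C\simeq M\otimes_{\Z[z]}\bigl(\Z[z]\otimes_{\HH(\Z[z]/\Z)}\Z[z]\bigr)$, whose homotopy acquires a divided power class in degree $2$ (Tor over an exterior algebra) and is in general very different from $\THH_*(A;\Z_p)$ — for $A=\Z_p$, $\pi=p$ it would be $\Z_p[x]\langle y\rangle$ rather than the torsion groups $\THH_{2n-1}(\Z_p;\Z_p)=\Z_p/n$. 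Indeed, if your reversed identity were true, the spectral sequence would collapse with no room for the differential $d^2(x)=\phi'(\pi)\,dz$ of Lemma \ref{lem:boekstedtelem}, which carries exactly the information that distinguishes $\THH(A)$ from the relative THH.

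The repair is the paper's actual argument (the special case $R=\Z[z]$, $\bS_R=\bS[z]$ of Proposition \ref{prop:spectralsequencegeneral}): the object to be filtered is $\THH(A;\Z_p)$ itself. Because $A$ is a discrete $\Z$-algebra, $\THH(A)$ is simultaneously a $\THH(\bS[z])$-module and a $\Z$-module, hence a module over $\HH(\Z[z]/\Z)=\THH(\bS[z])\otimes_\bS\Z$; one tensors $\THH(A;\Z_p)$ over $\HH(\Z[z]/\Z)$ with the (here finite, since $\HH(\Z[z]/\Z)$ is $1$-truncated) Whitehead tower of $\HH(\Z[z]/\Z)$, obtaining a multiplicative complete filtration of $\THH(A;\Z_p)$, and then identifies the associated graded via the correctly oriented base change
\[
\THH(A)\otimes_{\HH(\Z[z]/\Z)}\Z[z]\;\simeq\;\THH(A)\otimes_{\THH(\bS[z])}\bS[z]\;\simeq\;\THH(A/\bS[z]),
\]
so that the graded pieces are $\THH(A/\bS[z];\Z_p)\otimes_{\Z[z]}\Omega^n_{\Z[z]/\Z}[n]$ as you predicted. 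With the direction fixed, your remaining points — $\Omega^n_{\Z[z]/\Z}=0$ for $n\geq 2$ (because $\Z[z]$ has one variable, not for projective-dimension reasons), multiplicativity from a multiplicative filtration together with a lax symmetric monoidal base-change functor, and convergence from finiteness/completeness — do go through.
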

\begin{proof}
This is a special case of the spectral sequence of Proposition \ref{prop:spectralsequencegeneral}.
\end{proof}

Now for $A$ a CDVR we want to use this spectral sequence to determine $\THH_*(A; \Z_p)$. From Theorem \ref{thm:boekstedtDVR} we see that this spectral sequence takes the form
\[
E^2 = A[x] \otimes \Lambda(dz) \Rightarrow \THH_*(A; \Z_p)
\]
with $|x| = (2,0)$ and $|dz| = (0,1)$. 
\[
\begin{tikzpicture}[yscale=0.7]
\node(00) at (0,0) {$A$};
\node(20) at (2,0) {$A\{x\}$};
\node(40) at (4,0) {$A\{x^2\}$};
\node at (5,0) {\ldots};
\node(01) at (0,1) {$A\{dz\}$};
\node(21) at (2,1) {$A\{xdz\}$};
\node(41) at (4,1) {\ldots};
\node at (1,0) {$0$};
\node at (1,1) {$0$};
\node at (3,0) {$0$};
\node at (3,1) {$0$};
\node at (0,2) {$0$};
\node at (1,2) {$0$};
\node at (2,2) {$0$};
\node at (3,2) {$\ldots$};
\node at (0,3) {$\vdots$};

\draw[->] (20) edge (01);
\draw[->] (40) edge (21);
\begin{scope}[shift={(-0.6,-0.5)}]
	\draw[-latex] (0,0) -- (7,0);
	\draw[-latex] (0,0) -- (0,4.2);
\end{scope}
\end{tikzpicture}
\]
Using the multiplicative structure one only has to determine a single differential
\[
d^2: A  \{ x\} \to A \{ dz \}  \ .
\]
In the equal characteristic case this has to vanish since $x$ can be chosen to lie in the image of the map $\THH(\F_p) \to \THH(A; \Z_p) \to \THH(A / \bS[z]; \Z_p)$ and thus has to be a permanent cycle. Thus the spectral sequence degenerates and we get $\THH_*(A) = A[x] \otimes \Lambda(dz)$ as there can not be any extension problems for degree reasons. 
\footnote{This can also be seen directly using that $A = k \pow{z} = \F_p \otimes_\bS \bS_{W(k)}\pow{z}$ which implies
\[
\THH(A) = \THH(\F_p) \otimes_{\bS} \THH(\bS_{W(k)}\pow{z}) = \THH(\F_p) \otimes_{\F_p} \HH(A / \F_p) \ .
\]
}

Let us now assume that $A$ is a CDVR of mixed characteristic. Once we have chosen a uniformizer $\pi$ we get a minimal polynomial $\phi(z) \in W(k)[z]$ which we normalize such that 
$\phi(0) = p$. Note that usually $\phi$ is taken to be monic, of the form  $\phi(z) =  z^e + p\theta(z)$. This differs from our convention by the unit $\theta(0)$.
\begin{lem}
\label{lem:boekstedtelem}
There is a choice of generator $x \in \THH_2(A / \bS[z]; \Z_p)$ such that $d^2(x) = \phi'(\pi) dz$. 
\end{lem}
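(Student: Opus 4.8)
### Proof proposal

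The plan is to identify the differential $d^2 \colon A\{x\} \to A\{dz\}$ by comparing the spectral sequence of Proposition~\ref{prop:ssconstruction} for $A$ with the corresponding spectral sequence for the base ring $W(k)[z]$ itself, and then using naturality along the quotient map $W(k)[z] \to W(k)[z]/\phi(z) = A$. First I would observe that since $\THH(W(k))$ is $p$-adically equivalent to $W(k)$ (Remark~\ref{rem_padic}, applied to the perfect ring $k$), we have $\THH(W(k)[z]/\bS[z];\Z_p) \simeq W(k)[z]$, so the spectral sequence for $R := W(k)[z]$ reads $W(k)[z]\otimes\Lambda(dz) \Rightarrow \THH_*(W(k)[z];\Z_p)$. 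Here there is no class in degree $2$ at all on the $E^2$-page, so nothing interesting happens; the point of bringing in $R$ is rather to exploit the multiplicative map of spectral sequences induced by $R \to A$, together with the fact that $\THH(A/\bS[z];\Z_p)$ is an algebra over $\THH(R/\bS[z];\Z_p) = W(k)[z]$, which is how $A[x]$ gets its $A$-module (indeed $W(k)[z]$-algebra) structure on the $E^2$-page.

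The key step is a local computation at the level of the base change $A = R/\phi(z)$. The idea is that the generator $x$ of $\THH_2(A/\bS[z];\Z_p) = A[x]$ should be taken to be (the image of) the class represented by the Bockstein-type element coming from the relation $\phi(z) = 0$, i.e. the class in $\THH_2$ that maps under the Bökstedt-periodicity comparison to the canonical divided-power generator $y$ of $\HH_2((\Z[z]/z^k)/\Z[z])$-type computations — more precisely, to the generator of $\THH_2(A/W(k)\pow{z};\Z_p)$, which by the analogue of Theorem~\ref{thm:free} is the free $\E_2$-algebra picture. Concretely, $A \simeq W(k)\pow{z} \otimes_{W(k)\pow{w}} W(k)$ via $w \mapsto \phi(z)$, so $\THH(A/W(k)\pow{z};\Z_p) \simeq A \otimes_{W(k)\pow{w}} \THH(W(k)/\bS[w];\Z_p)$-style base change gives a Koszul/divided-power class $y$ in degree $2$ dual to the element $w = \phi(z)$. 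Transporting $y$ to $\THH_2(A/\bS[z];\Z_p)$ yields the preferred generator $x$, and the differential $d^2$ applied to it is computed by the connecting map, which sees exactly the image of $d\phi(z) = \phi'(z)\,dz$ in $\Omega^1_{\Z[z]/\Z} \otimes A$, i.e. $\phi'(\pi)\,dz$.

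To make the last sentence rigorous I would set up the comparison of spectral sequences associated to the square relating $\THH(-/\bS[z])$, $\THH(-/\bS\pow{z})$, and $\THH(-/W(k)\pow{z})$ (using Proposition~\ref{prop:completenessproperties} that all the relevant relative $\THH$'s agree after $p$-completion), and track the class $x$: it is a permanent cycle for the spectral sequence computing $\THH(A/W(k)\pow{z};\Z_p)$ trivially (that spectral sequence has $E^2 = \THH_*(A/W(k)\pow{z};\Z_p)\otimes \Omega^*_{W(k)\pow{z}/W(k)}$ and $\Omega^1_{W(k)\pow{z}/W(k)}$ contributes a $dz$, but relative to $W(k)\pow{w}$ there is a class $dw$), and the differential in the $\bS[z]$-relative spectral sequence is obtained from the $W(k)\pow{w}$-relative one by applying $W(k)\pow{z} \otimes_{W(k)\pow{w}} -$ along $w \mapsto \phi(z)$, which on $\Omega^1$ sends $dw \mapsto \phi'(z)\,dz$. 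The main obstacle, and the place where care is needed, is pinning down that the canonical generator $y$ of the $2$-line in the $W(k)\pow{w}$-relative picture genuinely transgresses to $dw$ (up to a unit that the normalization $\phi(0)=p$ fixes) rather than to $0$ or to $p$ times a generator; this is essentially the assertion that in the spectral sequence $\THH_*(W(k)/\bS[w];\Z_p)\otimes\Lambda(dw) \Rightarrow \THH_*(W(k);\Z_p)$ — equivalently in the computation of $\THH_*(\Z_p;\Z_p)$ via its relative form — the degree-$2$ periodicity class supports the first nontrivial differential hitting $dw$, which is a known fact (it is the relative version of Bökstedt's original $\THH(\Z_p)$ computation). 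Once that single differential is identified over the base, everything else follows by naturality and multiplicativity.
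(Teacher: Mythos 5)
Your approach is genuinely different from the paper's, and as written it has a gap at exactly the point you yourself flag as the ``main obstacle'': the transgression over the base. You reduce the lemma, via a base change of spectral sequences along $w\mapsto\phi(z)$, to the assertion that in $\THH_*(W(k)/\bS[w];\Z_p)\otimes\Lambda(dw)\Rightarrow\THH_*(W(k);\Z_p)$ the degree-$2$ class hits a unit multiple of $dw$, and you cite this as ``a known fact'' (Bökstedt's $\THH(\Z_p)$). But that is precisely the special case $A=W(k)$, $\phi(w)=w-p$ of the statement being proved, and no argument is supplied. Moreover the reduction itself is not set up: the spectral sequences of Proposition \ref{prop:spectralsequencegeneral} are functorial in maps of pairs $(\bS_R,A)$, and the square you need (a map $\bS[w]\to\bS[z]$, or at least $\bS[w]\to\bS_{W(k)}[z]$, sending $w\mapsto\phi(z)$ and compatible with $W(k)\to A$) is never constructed --- note $\phi(z)\notin\Z[z]$, so some $\bS_{W(k)}$-linear version is unavoidable. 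You also conflate the discrete ring $W(k)\pow{z}$ with its spherical lift $\bS_{W(k)}\pow{z}$: Proposition \ref{prop:completenessproperties} concerns the latter, whereas $\THH(A/W(k)\pow{z})$ over the discrete ring is the divided power algebra $A\langle y\rangle$ (the Tor computation as in Proposition \ref{quotient}), not $A[x]$; so ``all the relevant relative $\THH$'s agree after $p$-completion'' is false as stated, and this distinction is exactly what Bökstedt periodicity is about. Your plan to fix the unit ``by the normalization $\phi(0)=p$'' is likewise unjustified, and is more than the lemma asks for.

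The paper's proof avoids all of this with one step you never invoke: compute the abutment in degree $1$. Since $\THH(\bS_{W(k)};\Z_p)=\bS_{W(k)}$ and $A$ is of finite type over $\bS_{W(k)}$, one has $\THH(A;\Z_p)\simeq\THH(A/\bS_{W(k)})$, and for connectivity reasons $\THH_1(A;\Z_p)\cong\Omega^1_{A/W(k)}\cong A\{dz\}/\phi'(\pi)dz$. The only differential into bidegree $(0,1)$ is $d^2\colon E^2_{2,0}\to E^2_{0,1}$, so its image must be the submodule generated by $\phi'(\pi)dz$; since $A$ is a domain, $d^2(x)$ is a unit multiple of $\phi'(\pi)dz$ for any generator $x$, and rescaling $x$ removes the unit. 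Note the lemma only asserts the existence of such an $x$, so no canonical Koszul-type generator is needed. If you wish to salvage your route, the base case you left open is itself handled by this abutment argument ($\THH_1(W(k);\Z_p)=0$ forces $d^2(x)$ to be a unit times $dw$), but you would still have to construct the comparison of spectral sequences honestly, with $\bS_{W(k)}$-coefficients, before naturality gives you anything.
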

\begin{proof}
$\THH(A;\Z_p)$ agrees with $\THH(A / \bS_{W(k)}; \Z_p)$, since $\THH(\bS_{W(k)};\Z_p) = \bS_{W(k)}$. Since $A$ is of finite type over $\bS_{W(k)}$ we use Lemma \ref{lem_complete} to see that 
$\THH(A / \bS_{W(k)}; \Z_p) = \THH(A/\bS_{W(k)})$. 
For connectivity reasons, 
\[
\THH_1(A/\bS_{W(k)}) = \HH_1(A/W(k)) =  \Omega^1_{A / W(k)}  .
\]
Since $A = W(k)[z]/\phi(z)$, we have 
\[
\Omega^1_{A/W(k)}= {A \{dz \} } / {\phi'(\pi)dz}.
\]
 Comparing with the spectral sequence, this means that the image of $d^2: E^2_{2,0}\to E^2_{0,1}$ is precisely the submodule of $A\{dz\}$ generated by $\phi'(\pi) dz$. Since $A$ is a domain, any two generators of a principal ideal differ by a unit, and thus for any generator $x$ in degree $(2,0)$, $d^2(x)$ differs from $\phi'(\pi) dz$ by a unit. In particular, we can choose $x$ such that $d^2(x) = \phi'(\pi) dz$.
\end{proof}

\begin{remark}\label{rem_generator}
The generator $x\in \THH_2(A / \bS[z]; \Z_p)$ determined by Lemma \ref{lem:boekstedtelem} maps under basechange along $\bS[z]\to \bS$ to a generator of $\THH_2(A/\pi; \Z_p) = \THH_2(k)$. The choice of normalization of $\phi$ with $\phi(0) = p$ is chosen such that this is compatible with the generator obtained  from the generator of $\THH_2(\F_p)$ under the map $\THH_2(\F_p)\to \THH_2(k)$ induced by $\F_p\to k$. 
\end{remark}

Lemma \ref{lem:boekstedtelem} implies that $\THH_*(A, \Z_p)$ is isomorphic to the homology of the DGA 
\[
(A[x]\otimes\Lambda(d\pi), \partial) \qquad\qquad |x| = 2, |d\pi| = 1 
\] with differential 
$\partial x = \phi'(\pi)\cdot d\pi$ and $\partial (d\pi) = 0$ as there are no multiplicative extensions possible. Here we have named the element detected by $dz$ by $d\pi$ as it is given by Connes operator $d: \THH_*(A, \Z_p) \to \THH_*(A, \Z_p)$ applied to the uniformizer $\pi$. This follows from the identification of the degree $1$ part with $\Omega^1_{A/W(k)}$ as in the proof of Lemma \ref{lem:boekstedtelem}. We warn the reader that we have obtained this description for $\THH_*(A; \Z_p)$ from the relative $\THH$ which depends on a choice of uniformizer. As a result the DGA description is only natural in maps that preserve the chosen uniformizer. 

The homology of this DGA can easily be additively evaluated to yield the following result, which was first obtained in \cite[Theorem 5.1]{MR1707702}, but with completely different methods.

\begin{thm}[Lindenstrauss-Madsen]
For a CDVR $A$ of mixed characteristic $(0,p)$ with perfect residue field we have non-natural isomorphisms \footnote{In the sense that they are only natural in maps that preserve the chosen uniformizer.}
\[
\THH_*(A; \Z_p) \cong \begin{cases}
A & \text{for } * = 0 \\
A/ n \phi'(\pi) & \text{for } * = 2n-1 \\
0 & \text{otherwise }
\end{cases}
\]
where $\pi$ is a uniformizer with minimal polynomial $\phi$. 
\end{thm}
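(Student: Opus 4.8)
The plan is to directly compute the homology of the differential graded algebra
\[
(A[x]\otimes\Lambda(d\pi),\,\partial),\qquad |x|=2,\ |d\pi|=1,\qquad \partial x=\phi'(\pi)\,d\pi,\quad \partial(d\pi)=0,
\]
which by the discussion preceding the theorem (built on Lemma \ref{lem:boekstedtelem} and the spectral sequence of Proposition \ref{prop:ssconstruction}) computes $\THH_*(A;\Z_p)$. As an $A$-module this DGA is free with basis $\{x^n\}_{n\ge 0}$ in the even degrees and $\{x^n\,d\pi\}_{n\ge 0}$ in the odd degrees, so the underlying chain complex splits as a direct sum of the trivial complex $A$ in degree $0$ together with, for each $n\ge 1$, a two-term complex concentrated in degrees $2n$ and $2n-1$.

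First I would spell out the differential using the Leibniz rule: $\partial(x^n)=n\,x^{n-1}\,\partial x=n\phi'(\pi)\,x^{n-1}d\pi$, while $\partial(x^n d\pi)=n\phi'(\pi)\,x^{n-1}(d\pi)^2=0$ since $\Lambda(d\pi)$ is exterior. Hence the two-term complex in degrees $(2n,2n-1)$ is exactly $A\to A$, $x^n\mapsto x^{n-1}d\pi$, i.e.\ multiplication by $n\phi'(\pi)$.

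Next I would check that multiplication by $n\phi'(\pi)$ on $A$ is injective for every $n\ge 1$. Since $A$ is a domain it suffices that $n\phi'(\pi)\ne 0$: we have $n\ne 0$ in $A$ because $A$ has characteristic $0$, and $\phi'(\pi)\ne 0$ because $\phi$ is the minimal polynomial of $\pi$ over $W(k)$, hence irreducible over the characteristic-zero field $W(k)[1/p]$ and therefore separable, so it has no repeated root. Reading off the homology of the two-term complexes then gives $H_0=A$, $H_{2n}=0$ for $n\ge 1$ (injectivity), and $H_{2n-1}=A/n\phi'(\pi)$ for $n\ge 1$ (cokernel), which is precisely the claimed formula.

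There is no real obstacle here: the substantive content lies in the DGA description already established, and the only non-formal input to the homology computation is the separability of $\phi$, equivalently $\phi'(\pi)\ne 0$. I would close by reiterating the remark already made in the text that the resulting isomorphism is natural only for maps of CDVRs preserving the chosen uniformizer, since both the DGA and, in particular, the class $d\pi$ depend on that choice.
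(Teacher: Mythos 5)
Your proposal is correct and follows the paper's own route: the paper likewise reduces the theorem to the DGA $(A[x]\otimes\Lambda(d\pi),\partial)$ with $\partial x=\phi'(\pi)d\pi$ established just before the statement, and you have simply written out the (easy) homology computation, including the needed observations that $n\neq 0$ in $A$ and $\phi'(\pi)\neq 0$ by separability. No gaps.
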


In this case the multiplicative structure is necessarily trivial, so that we do not really get more information from the DGA description. But we also obtain a spectral sequence analogous to the one of Proposition \ref{prop:ssconstruction} for $p$-completed $\THH$ of $A$ with coefficients in a discrete $A$-algebra $A'$, which is $\THH(A;\Z_p) \otimes_A A'$. This takes the same form, just base-changed to $A'$. Thus we get the following result, which was of course also known before.

\begin{prop}
\label{prop:thhdga}
For a CDVR $A$ of mixed characteristic and any map of commutative algebras $A \to A'$ we have a non-natural ring isomorphism
\[
\pi_*(\THH(A;\Z_p) \otimes_A  A') \cong H_*(A'[x] \otimes \Lambda(d\pi), \partial)
\]
with $\partial x = \phi'(x)d\pi$ and $\partial d\pi=0$.\qed
\end{prop}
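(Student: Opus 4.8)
The plan is first to upgrade the homotopy-level identification preceding the proposition to an equivalence of $\E_1$-$A$-algebras $\THH(A;\Z_p) \simeq (A[x]\otimes\Lambda(d\pi),\partial)$, and then to base change along $A\to A'$. The starting observation is that the spectral sequence of Proposition~\ref{prop:ssconstruction} arises from a multiplicative filtration of $\THH(A;\Z_p)$ with only two nonzero associated graded pieces, since $\Omega^\bullet_{\Z[z]/\Z}$ is concentrated in, and free over $\Z[z]$ in, homological degrees $0$ and $1$: we have $\mathrm{gr}^0 \simeq \THH(A/\bS[z];\Z_p) =: R$ and $\mathrm{gr}^1 \simeq \Sigma R$, the shift coming from $|dz| = 1$. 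Because $\Omega^2_{\Z[z]/\Z} = 0$, the subobject $F^1$ is a square-zero ideal, so $\THH(A;\Z_p)$ is a square-zero extension of the $\E_1$-$A$-algebra $R$ by the $R$-module $\Sigma R$.

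Next I would identify $R$ with the free $\E_1$-$A$-algebra on a generator in degree $2$: by Theorem~\ref{thm:boekstedtDVR} the generator $x$ of Lemma~\ref{lem:boekstedtelem} induces a map $\Free^A_{\E_1}(\Sigma^2 A) \to \THH(A/\bS[z];\Z_p)$ of $\E_1$-$A$-algebras which induces an isomorphism on homotopy groups (both sides being $A[x]$, with the free generator mapping to $x$), hence is an equivalence. In particular $R$ is formal over $A$ with strict model the DGA $(A[x],0)$, $|x| = 2$, and since $\pi_* R = A[x]$ is flat over $A$ the base change satisfies $R\otimes_A A' \simeq \Free^{A'}_{\E_1}(\Sigma^2 A')$ with $\pi_*(R\otimes_A A') = A'[x]$ and no $\mathrm{Tor}$ contributions.

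Now, square-zero extensions of the free $\E_1$-algebra $R$ by $\Sigma R$ are classified up to equivalence by $\pi_0(R) = A$ --- a derivation out of a free $\E_1$-algebra is determined by the image of the generator --- and the class of a given extension is detected by its connecting homomorphism evaluated on $x$, which is precisely $d^2(x)$. By Lemma~\ref{lem:boekstedtelem}, $d^2(x) = \phi'(\pi)\,dz$, so the class of our extension is $\phi'(\pi) \in A$. Modeling this square-zero extension strictly, using the formal model $(A[x],0)$ for $R$ together with the chain-level derivation determined by $x \mapsto \phi'(\pi)\,d\pi$, produces exactly the DGA $(A[x]\otimes\Lambda(d\pi),\partial)$; hence $\THH(A;\Z_p) \simeq (A[x]\otimes\Lambda(d\pi),\partial)$ as $\E_1$-$A$-algebras (which in particular re-derives the homotopy-level statement preceding the proposition). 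Finally, since the underlying complex $A[x]\otimes\Lambda(d\pi)$ is termwise $A$-free, base change along $A\to A'$ is computed by the naive base change of DGAs, so $\THH(A;\Z_p)\otimes_A A' \simeq (A'[x]\otimes\Lambda(d\pi),\partial)$ as $\E_1$-$A'$-algebras, with $\phi'(\pi)$ now read as its image in $A'$; passing to homotopy groups of this DGA-presented algebra yields the asserted ring isomorphism $\pi_*(\THH(A;\Z_p)\otimes_A A') \cong H_*(A'[x]\otimes\Lambda(d\pi),\partial)$.

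The step I expect to demand the most care is the identification of the square-zero extension with the stated strict DGA: one must understand the filtration underlying Proposition~\ref{prop:ssconstruction} well enough to see that it is $R$-linear with $\mathrm{gr}^1 \simeq \Sigma R$ and that its connecting map is a derivation, match this derivation with the $d^2$-differential supplied by Lemma~\ref{lem:boekstedtelem}, and rectify the resulting square-zero extension to a strict DGA. The remaining ingredients --- formality of free $\E_1$-algebras, flatness of $A[x]$ over $A$, and the computation of homotopy groups of a termwise-flat DGA viewed as an $\E_1$-ring spectrum --- are routine.
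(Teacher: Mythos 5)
Your route is considerably more ambitious than what the statement requires, and its pivotal step is not justified. You pass from ``the two-step filtration has $F^1\cdot F^1\subseteq F^2=0$'' to ``$\THH(A;\Z_p)$ is a square-zero extension of $R=\THH(A/\bS[z];\Z_p)$ by $\Sigma R$, classified by a derivation, hence determined by the image of the generator $x$''. Having a null multiplication on the fiber does not by itself produce a classifying derivation: a square-zero extension in the sense needed for your classification argument is by definition one pulled back along a derivation, and the general criteria guaranteeing that an extension with square-zero ideal is of this form (Lurie's $n$-small extension results) require the fiber to be suitably truncated. Here the fiber $\Sigma R$ has homotopy in all odd degrees, so those results do not apply, and no substitute argument is given. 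Equally, the claim that the equivalence class of such an extension of the free $\E_1$-algebra is detected by the connecting map of the underlying fiber sequence evaluated on $x$ presupposes exactly this rectification. So the step you flag as ``demanding the most care'' is a genuine gap, and it is the hard part of your plan; everything else (formality and freeness of $R$, the identification of the connecting map with $d^2(x)=\phi'(\pi)\,dz$, flat base change of a termwise $A$-free DGA) is fine.

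The paper never proves, or needs, the $\E_1$-level statement $\THH(A;\Z_p)\simeq (A[x]\otimes\Lambda(d\pi),\partial)$. Its proof of the proposition is the coefficient version of the same filtration: the construction of Proposition~\ref{prop:ssconstruction} applied to $\THH(A;\Z_p)\otimes_A A'$ gives a multiplicative spectral sequence with $E^2=A'[x]\otimes\Lambda(dz)$ and $d^2(x)=\phi'(\pi)dz$ by Lemma~\ref{lem:boekstedtelem}, so $E^3=H_*(A'[x]\otimes\Lambda(d\pi),\partial)$; since the $E^2$-page is concentrated in even columns and rows $0,1$, each total degree receives a contribution from a single bidegree, so the sequence degenerates and there is no room for additive or multiplicative extensions, giving the ring isomorphism directly. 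If you want to salvage your approach you would have to actually rectify the extension (for instance by an obstruction-theoretic construction of an $\E_1$-$A$-algebra map from a strict model of the DGA), but for the statement at hand this extra strength is unnecessary.
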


\section{Absolute $\THH$ for quotients of DVRs}
\label{sec:thhquot}

Now we come back to the case of quotients of DVRs. Thus let $A' = A/\mathfrak{m}^k \cong A/\pi^k$ where $A$ is a DVR with perfect residue field of characteristic $p$. Recall that in Proposition \ref{quotient} we have shown that 
\[
\THH_*(A' /\bS[z]) \cong A'[x]\langle y \rangle
\]
We want to consider the spectral sequence of Proposition \ref{prop:ssconstruction}, which in this case takes the form
\[
E^2 = A'[x]\langle y \rangle \otimes \Lambda(dz) \Rightarrow \THH_*(A')
\]
with $|x| = (2,0)$, $|y| = (2,0)$ and $|dz| = (0,1)$. 
\[
\begin{tikzpicture}[yscale=0.7, xscale=1.5]
\node(00) at (0,0) {$A'$};
\node(20) at (2,0) {$A'\{x,y\}$};
\node(40) at (4,0) {$A'\{x^2,xy,y^{[2]}\}$};
\node at (5,0) {\ldots};
\node(01) at (0,1) {$A'\{dz\}$};
\node(21) at (2,1) {$A'\{xdz,ydz\}$};
\node(41) at (4,1) {\ldots};
\node at (1,0) {$0$};
\node at (1,1) {$0$};
\node at (3,0) {$0$};
\node at (3,1) {$0$};
\node at (0,2) {$0$};
\node at (1,2) {$0$};
\node at (2,2) {$0$};
\node at (3,2) {$\ldots$};
\node at (0,3) {$\vdots$};
\draw[->] (20) edge (01);
\draw[->] (40) edge (21);
\begin{scope}[shift={(-0.4,-0.5)}]
	\draw[-latex] (0,0) -- (6.3,0);
	\draw[-latex] (0,0) -- (0,4.2);
\end{scope}
\end{tikzpicture}
\]
Here we write $y^{[n]}$ for the $n$-th divided power of $y$. The reader should think of $y^{[n]}$ as `$y^n/n!$'. 

\begin{lem}\label{diff_quot}
We can choose the generator $y$ and its divided powers in such a way that in the associated spectral sequence, $d^2(y^{[i]}) = k \pi^{k-1} \cdot y^{[i-1]} dz$. In particular the differential is a PD derivation, i.e. satisfies $d^2(y^{[i+1]}) = d^2(y) y^{[i]}$ for all $i \geq 0 $. \footnote{Note that since $A'$ is not a domain this does not uniquely determine $y$. One could fix a choice of such a $y$ by comparison with elements in the Bar complex, but this is not necessary for our applications.}
\end{lem}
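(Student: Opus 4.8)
The plan is to trace through the construction of the spectral sequence in Proposition~\ref{prop:ssconstruction} and the identification of $\THH_*(A'/\bS[z])$ from Proposition~\ref{quotient}, and to pin down the $d^2$ differential by computing in low degrees. First I would recall that the whole spectral sequence is obtained by base change from the universal case over $\bS[z]$: namely, by the formula $\THH(A') = \THH(A'/\bS[z]) \otimes_{\bS[z]} \bS$, applied compatibly with the filtration coming from the cellular structure of $\bS$ as a $\bS[z]$-module (i.e.\ from $\bS[z] \xrightarrow{z} \bS[z] \to \bS$). Thus the differential $d^2$ is, up to the identifications already made, the boundary map in the cofiber sequence $\Sigma^{0,1}\THH(A'/\bS[z]) \to (\text{filtration quotient}) \to \THH(A'/\bS[z])$; concretely it is ``multiply by $\pi$ and apply a Connes-type operator''. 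Since $\pi^k = 0$ in $A'$, but $\pi^{k-1} \ne 0$, this is exactly where the factor $k\pi^{k-1}$ should enter, via the Leibniz rule for the divided power structure.

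The cleanest route is to identify $d^2(y)$ directly. By Proposition~\ref{quotient} the class $y$ comes from $\HH_2\big((\Z[z]/z^k)/\Z[z]\big)$, i.e.\ from the divided power polynomial algebra $(\Z[z]/z^k)\langle y\rangle$, which in turn arises as $\Tor^{\Lambda_{\Z[z]/z^k}(e)}_*(\Z[z]/z^k, \Z[z]/z^k)$ with $|e|=1$. So I would run the analogous spectral sequence for $\THH\big((\bS[z]/z^k)/\bS[z]\big)$ — equivalently compute $\HH_*\big((\Z[z]/z^k)/\Z[z]\big) \otimes_{\Z[z]} \Omega^*_{\Z[z]/\Z} \Rightarrow \HH_*(\Z[z]/z^k)$ — where everything is fully explicit. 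On the target, $\HH_*(\Z[z]/z^k)$ is classical: it is computed from the free resolution of $\Z[z]/z^k$ over $\Z[z]$, and one finds $\HH_1(\Z[z]/z^k) = \Omega^1_{(\Z[z]/z^k)} = (\Z[z]/z^k)\{dz\}/(kz^{k-1}\,dz)$. Comparing with the $E^2 = (\Z[z]/z^k)\langle y\rangle \otimes \Lambda(dz)$ page, this forces $d^2(y)$ to generate the submodule $(kz^{k-1})\cdot (\Z[z]/z^k)\{dz\}$, so $d^2(y) = u\cdot k z^{k-1}\,dz$ for a unit $u$; absorbing $u$ into the choice of $y$ gives $d^2(y) = kz^{k-1}\,dz$, which base-changes to $d^2(y) = k\pi^{k-1}\,dz$ in the case of $A'$.

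For the divided powers, the point is that the edge/boundary map underlying $d^2$ is induced by the $\Z[z]$-linear Connes operator (the simplicial differential), which on the Bar-constructed model $(\Z[z]/z^k)\langle y\rangle = \Tor^{\Lambda(e)}$ is compatible with the divided power structure: the relevant operator is a PD derivation because it is, up to the identifications, the de Rham differential $d$ on $\HH_*$, and $d$ is a PD derivation on divided power algebras by the formula $d(y^{[i+1]}) = y^{[i]}\,dy$. I would make this precise by choosing the divided powers $y^{[i]}$ to be the images of the evident basis elements of $\Tor^{\Lambda(e)}_{2i}$ under the identification in Proposition~\ref{quotient}, and checking that the spectral sequence differential, which is a derivation with respect to the multiplicative structure (it is the $d^2$ of a multiplicative spectral sequence, Proposition~\ref{prop:ssconstruction}), automatically satisfies the PD-Leibniz rule once one knows it is compatible with the divided power filtration — this last compatibility is where one uses that $y$ genuinely comes from $\HH(\Z[z]/z^k)$ and not merely from an abstract divided power algebra. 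Then $d^2(y^{[i]}) = d^2(y)\cdot y^{[i-1]} = k\pi^{k-1} y^{[i-1]}\,dz$ as claimed.

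The main obstacle I expect is the last point: verifying that the $d^2$ differential respects the divided power structure, rather than just the $\Z$-algebra structure. A multiplicative spectral sequence differential is a derivation for the ordinary product, but ``$d^2(y^{[i]}) = d^2(y) y^{[i-1]}$'' is a genuinely stronger PD-Leibniz statement. I would handle this by working with an explicit chain-level model: realize $\THH(A'/\bS[z])$ (or at least $\HH((\Z[z]/z^k)/\Z[z])$) via the Bar complex, identify the filtration giving the spectral sequence with the $z$-adic (two-step) filtration, and observe that the induced boundary map is literally given by the formula for the de Rham / Connes differential on the divided power algebra $(\Z[z]/z^k)\langle y\rangle$, which is the PD derivation $y^{[i]} \mapsto y^{[i-1]}\,dy$. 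Once that identification is in place, the coefficient is read off from $d^2(y) = k\pi^{k-1}\,dz$, and the footnote's caveat (non-uniqueness of $y$ because $A'$ is not a domain) is exactly what lets us normalize the unit $u$ away.
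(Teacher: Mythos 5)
Your skeleton matches the paper's: reduce to the universal spectral sequence $(\Z[z]/z^k)\langle y\rangle\otimes\Lambda(dz)\Rightarrow\HH_*(\Z[z]/z^k)$, read off that $d^2(y)$ generates the kernel of $(\Z[z]/z^k)\{dz\}\to\HH_1(\Z[z]/z^k)=\Omega^1_{(\Z[z]/z^k)/\Z}$, hence is a unit multiple of $kz^{k-1}dz$, normalize $y$, and transport everything into the spectral sequence for $\THH(A')$ via the multiplicative map induced by $\HH(\Z[z]/z^k)\to\THH(A')$. But at the one genuinely new point of the lemma --- the PD-Leibniz rule $d^2(y^{[i]})=d^2(y)\,y^{[i-1]}$, which you yourself flag as the main obstacle --- your proposal has a gap: you only assert that one would ``observe'' at the chain level that the $d^2$ of the Whitehead-tower filtration is the Connes/de Rham PD derivation on a Bar model of $(\Z[z]/z^k)\langle y\rangle$. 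That identification is exactly what has to be proved, and no argument is given; it would require producing a filtered chain-level model in which the boundary map of the Postnikov extension of $\HH(\Z[z])$, after tensoring with $\HH(\Z[z]/z^k)$, is visibly that operator --- a substantial computation absent from the sketch. The paper closes this gap with a short algebraic observation you could substitute directly: in the universal case the $E^2$-page is torsion free as an abelian group, so ordinary multiplicativity of $d^2$ gives $i!\,d^2(y^{[i]})=d^2(y^i)=i\,d^2(y)\,y^{i-1}=i!\,d^2(y)\,y^{[i-1]}$, and one may divide by $i!$; the resulting PD relation then descends to the $A'$ spectral sequence by naturality. Note that this division is only legitimate upstairs --- over $A'$ the page has torsion --- which is the real reason the reduction to $\Z[z]/z^k$ is needed for the divided-power statement and not merely for normalizing $d^2(y)$.

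A smaller point: the formula in your opening paragraph, $\THH(A')=\THH(A'/\bS[z])\otimes_{\bS[z]}\bS$, is false; the right-hand side is $\THH(A'\otimes_{\bS[z]}\bS)$ by base change, whereas the correct relation runs the other way, $\THH(A'/\bS[z])=\THH(A')\otimes_{\THH(\bS[z])}\bS[z]$, and the spectral sequence arises from filtering $\HH(\Z[z])$ by its Whitehead tower and tensoring with $\THH(A')$ over $\HH(\Z[z])$. This misstatement does not affect your computation in the universal case, but the heuristic reading of $d^2$ as ``multiply by $\pi$ and apply a Connes-type operator'' drawn from it is part of why the PD step is left unproved.
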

\begin{proof}
The construction of the spectral sequence of  Proposition \ref{prop:ssconstruction} (given in the proof of  Proposition \ref{prop:spectralsequencegeneral})  applies generally to any $\HH(\Z[z]/\Z)$-module $M$ to produce a spectral sequence
\[
\pi_*(M\otimes_{\HH(\Z[z]/\Z)} \Z[z]) \otimes_{\Z[z]} \HH_*(\Z[z]/\Z) \Rightarrow \pi_*(M). 
\]
Since we can write $A' = A\otimes_{\bS[z]} (\bS[z]/z^k)$, we have
\[
\THH(A') \simeq \THH(A)\otimes_{\THH(\bS[z])} \THH(\bS[z]/z^k) \simeq \THH(A) \otimes_{\HH(\Z[z])} \HH(\Z[z]/z^k).
\]
So we have a map of $\HH(\Z[z])$-algebras $\HH(\Z[z]/z^k) \to \THH(A')$, and thus a multiplicative map of the corresponding spectral sequences. The spectral sequence for $\HH(\Z[z]/z^k)$ is of the form
\[
\HH_*((\Z[z]/z^k) / \Z[z]) \otimes \Lambda(dz) \Rightarrow \HH(\Z[z]/z^k). 
\]
We have that $\HH_*((\Z[z]/z^k) / \Z[z])  = (\Z[z]/z^k)\langle y \rangle$.
Since the spectral sequence is multiplicative, we get
\[
i! d^2(y^{[i]}) = d^2(y^i) = i d^2(y) y^{i-1} = i! d^2(y) y^{[i-1]},
\]
and since the $E^2$-page consists of torsion free abelian groups, we can divide this equation by $i!$ to get
\[
d^2(y^{[i]}) = d^2(y) y^{[i-1]},
\]
i.e. the differential is compatible with the divided power structure.

Now, $\HH_1((\Z[z]/z^k) / \Z[z]) = \Omega^1_{(\Z[z]/z^k)/\Z[z]} = (\Z[z]/z^k)\{dz\} / kz^{k-1}dz$. In particular, in the spectral sequence
\[
\HH_*((\Z[z]/z^k) / \Z[z]) \otimes \Lambda(dz) \Rightarrow \HH(\Z[z]/z^k)
\]
$d^2(y)$ is a unit multiple of $kz^{k-1}dz$. We can thus choose our generator $y$ of $\HH_2((\Z[z]/z^k) / \Z[z])$ in such a way that $d^2(y) = k z^{k-1} dz$, and by compatibility with divided powers, $d^2(y^{[i]}) = k z^{k-1} \cdot y^{[i-1]} dz$. After base-changing along $\Z[z]\to A$, this implies the claim.
\end{proof}

\begin{thm}\label{thm_quot}
Let  $A' \cong A/\pi^k$ be a quotient of a DVR $A$ with perfect residue field of characteristic $p$. Then $\THH_*(A')$ is as a ring non-naturally isomorphic to the homology of the DGA 
\[
(A'[x]\langle y \rangle \otimes\Lambda(d\pi), \partial)  \qquad\qquad |x| = 2, |y| = 2, |d\pi| = 1
\]
with differential $\partial$ given by $\partial(d\pi) = 0$ and $\partial(y^{[i]}) =  k \pi^{k-1} \cdot y^{[i-1]} d\pi$ and 
\[
\partial(x) = \begin{cases}
\phi'(\pi)\cdot d\pi &\text{if $A$ is of mixed characteristic} \\
0 & \text{if $A$ is of equal characteristic}
\end{cases}
\]
Here $\pi \in A$ is a uniformizer and $\phi$ its minimal polynomial.
\end{thm}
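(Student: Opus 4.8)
The plan is to run the spectral sequence of Proposition~\ref{prop:ssconstruction} with the relative computation of Proposition~\ref{quotient} as input, in the style of Section~\ref{section5} but carrying along the extra divided-power class $y$. Since $p$ is nilpotent in $A'$ (trivially in equal characteristic; because $(p) = (\pi^e)$ in mixed characteristic), the spectrum $\THH(A')$ is already $p$-complete, so Propositions~\ref{prop:ssconstruction} and~\ref{quotient} together furnish a multiplicative, convergent spectral sequence
\[
E^2 = A'[x]\langle y\rangle \otimes \Lambda(dz) \;\Longrightarrow\; \THH_*(A'),
\]
with $x,y$ of bidegree $(2,0)$ and $dz$ of bidegree $(0,1)$. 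As it is concentrated in the two rows $q=0,1$ it degenerates at $E^3 = E^\infty$, and the single differential $d^2$, of bidegree $(-2,+1)$, carries all the information.

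The first genuine step is to identify $(E^2, d^2)$ with the DGA in the statement. For degree reasons $d^2$ vanishes on $E^2_{0,0} = A'$, hence is $A'$-linear; it is a derivation since the spectral sequence is multiplicative; and by Lemma~\ref{diff_quot} it is compatible with the divided-power structure. Thus $d^2$ is the unique $A'$-linear PD-derivation with $d^2(dz) = 0$, with $d^2(y^{[i]}) = k\pi^{k-1} y^{[i-1]} dz$ (Lemma~\ref{diff_quot}), and with $d^2(x)$ as follows: in mixed characteristic $d^2(x) = \phi'(\pi)\,dz$, which follows from Lemma~\ref{lem:boekstedtelem} via the map of spectral sequences induced by $A \to A'$ over $\bS[z]$ --- the class $x$ for $A'$ being, by construction in Proposition~\ref{quotient}, the image of the one for $A$; while in equal characteristic $d^2(x) = 0$, since there $x$ lies in the image of the edge homomorphism $\THH_2(A') \to \THH_2(A'/\bS[z])$, as one checks by comparing the composite $\THH(\F_p) \to \THH(A') \to \THH(A'/\bS[z])$ with $\THH(\F_p) \to \THH(A) \to \THH(A/\bS[z]) \to \THH(A'/\bS[z])$ and using that $x$ for $A'$ is by definition the image of $x$ for $A$. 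Renaming $dz$ as $d\pi$ --- legitimate because, as in Section~\ref{section5}, $dz$ detects Connes' operator applied to $\pi$ --- we get $(E^2, d^2) = (A'[x]\langle y\rangle \otimes \Lambda(d\pi), \partial)$, and hence $E^\infty \cong H_*(A'[x]\langle y\rangle \otimes \Lambda(d\pi), \partial)$ as a bigraded ring.

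It remains to pass from $E^\infty$ to $\THH_*(A')$, and the extension problems turn out to be vacuous. Since $x$ and $y$ have even total degree while $d\pi$ has odd total degree, the $E^\infty$-page is concentrated in even ``horizontal'' positions and in the two rows $q = 0, 1$; consequently, in each total degree $n$ at most one of $E^\infty_{n,0}$ and $E^\infty_{n-1,1}$ is nonzero. Hence the finite filtration on $\THH_n(A')$ has a single nonzero subquotient, so there are no additive extensions and $\THH_*(A') \cong E^\infty$ as graded abelian groups; moreover $F^2 \THH_*(A') = 0$, so there is no room for multiplicative corrections either --- in particular a product of two odd-degree classes lands in $F^2$ of an even-degree group and therefore vanishes, matching $(d\pi)^2 = 0$. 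Thus the ring structure on $\THH_*(A')$ agrees with that of $E^\infty = H_*(A'[x]\langle y\rangle \otimes \Lambda(d\pi), \partial)$, which is the claim. I expect the only delicate points to be the bookkeeping of the middle paragraph --- pinning down $d^2$ on all of $E^2$ as a PD-derivation and the naturality identifying $x$ for $A'$ with $x$ for $A$ --- together with the observation that, everything being concentrated in even columns, the passage to $\THH_*(A')$ introduces neither additive nor multiplicative ambiguity and requires no divided-power structure on $\THH_*(A')$ itself.
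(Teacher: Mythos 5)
Your proposal is correct and follows essentially the same route as the paper: the spectral sequence of Proposition~\ref{prop:ssconstruction} with $E^2$-page given by Proposition~\ref{quotient}, the differential on the divided powers from Lemma~\ref{diff_quot}, the differential on $x$ via comparison with the CDVR case (Lemma~\ref{lem:boekstedtelem}, resp.\ the permanent-cycle argument of Section~\ref{section5} in equal characteristic), and the observation that the two-row, even-column shape rules out additive and multiplicative extensions. You merely spell out the naturality and extension arguments that the paper leaves implicit in its one-line proof.
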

\begin{proof}
This follows immediately from Lemma \ref{diff_quot} together with the fact that 
there are no extension problems for degree reasons.
\end{proof}

\section{Evaluation of the result}\label{eval}

In this section we want to make the results of Theorem \ref{thm_quot} explicit. 
We start by considering the case of the p-adic integers $\Z_p$ in which Theorem \ref{thm_quot} reduces additively to Brun's result, but  gives some more multiplicative information. We note that all the computations in this section depend on the presentation $A' = A/\pi^k$ and are in particular highly non-natural in $A'$.

\begin{example}\label{ex:integerskbig}
We start by discussing the case $A = \Z_p$ and $k \geq 2$. We pick the  uniformizer $\pi=p$. The minimal polynomial is $\phi(z)=z-p$, and $A' = \Z/p^k$. The resulting groups $\THH_*(\Z/p^k)$ were additively computed by Brun \cite{MR1750729}.

We have $\partial y^{[i]} = kp^{k-1}\partial y^{[i-1]} d\pi$, and since the minimal polynomial is given by $z-p$ we get $\partial x = d\pi$. If $k\geq 2$, then $y' = y-k p^{k-1} x$ still has divided powers, given by
\[
(y')^{[i]} = \sum_{l\geq 0} (-1)^l \frac{k^l p^{l(k-1)}}{l!}y^{[i-l]} x^l,
\]
which makes sense since $v_p(l!) < \frac{l}{p-1}\leq l(k-1)$ by Lemma \ref{lem:legendre} below.

Now $\partial (y')^{[i]} = 0$, and we get a map of DGAs
\[
\left( (\Z/p^k)[x] \otimes\Lambda(d\pi), \partial\right)\otimes_\Z \left(\Z\langle y' \rangle, 0\right) \to \left((\Z/p^k)[x]\langle y \rangle \otimes\Lambda(d\pi), \partial\right) 
\]
which is an isomorphism by a straightforward filtration argument. 
By Proposition \ref{prop:thhdga}, the homology of $((\Z/p^k)[x] \otimes\Lambda(d\pi), \partial)$ coincides with $\pi_*(\THH(\Z_p)\otimes_{\Z_p} \Z/p^k)$. Thus applying the K\"unneth theorem we get 
\[
\THH_*(\Z/p^k) = \pi_*(\THH_*(\Z_p)\otimes_{\Z_p} \Z/p^k)\otimes_\Z \Z\langle y'\rangle
\]
 as rings. Concretely we get 
\begin{align*}
\THH_*(\Z/p^k) &= \bigoplus_{i\geq 0} \pi_{*-2i}(\THH(\Z_p)\otimes_{\Z_p} \Z/p^k)
\\&=\begin{cases} \Z/p^k \oplus \bigoplus_{1\leq i\leq n} \Z/{\gcd(p^k,i)} &\text{for } *=2n\\
\bigoplus_{1\leq i\leq n} \Z/{\gcd(p^k,i)} &\text{for } *=2n-1 \ .
\end{cases}
\end{align*}
\end{example}

So in the case $k\geq 2$, we can replace the divided power generator of our DGA by one in the kernel of $\partial$. We contrast this with the case $k=1$. In this case, of course, we expect to recover B\"okstedt's result $\THH_*(\Z/p) = (\Z/p)[x]$, but it is nevertheless interesting to analyze this result in terms of Theorem \ref{thm_quot} and observe how this differs from Example \ref{ex:integerskbig}.

\begin{example}
\label{ex:integersksmall}
For $A=\Z_p$ with uniformizer $p$ and $k=1$, i.e. $A'=\Z/p$, we have $\partial x = d\pi$ and $\partial y = d\pi$. Here, we can set $x' = x - y$ to obtain an isomorphism of DGAs
\[
\left( \Z[x'] , 0\right)\otimes_\Z \left((\Z/p)\langle y \rangle\otimes\Lambda(d\pi), \partial\right) \to \left((\Z/p)[x]\langle y \rangle \otimes\Lambda(d\pi), \partial\right).
\]
Since $\partial y^{[i]} = y^{[i-1]}dz$, and thus the homology of the second factor is just $\Z/p$ in degree $0$, Künneth applies to show that $\THH_*(\Z/p) \cong (\Z/p)[x']$.
\end{example}

The two qualitatively different behaviours illustrated in Examples \ref{ex:integerskbig} and \ref{ex:integersksmall} also appear in the general case: For sufficiently big $k$, we can modify the divided power generator $y$ to a $y'$ that splits off, and obtain a description in terms of $\THH(A; A')$ (Proposition \ref{prop:kbig}). For sufficiently small $k$, we can modify the polynomial generator to an $x'$ that splits off, and obtain a description in terms of $\HH(A')$ (Proposition \ref{prop:ksmall}. In the general case, as opposed to the case of the integers, these two cases do not cover all possibilities, and for $k$ in a certain region the homology groups of the DGA of Theorem \ref{thm_quot} are possibly without a clean closed form description.

Recall that, in the DGA of Theorem \ref{thm_quot}, we have $\partial x = \phi'(\pi)dz$ and $\partial y = k\pi^{k-1}dz$. The behavior of the DGA depends on which of the two coefficients 
has greater valuation.

\begin{lem}
\label{lem:degreetwo}
In mixed characteristic, we have 
\[
\THH_2(A') \cong A' \oplus A/{\gcd(\phi'(\pi),k\pi^{k-1},\pi^k)}.
\]
\begin{enumerate}
\item\label{item:degen-case} If $p|k$ and $\pi^k | \phi'(\pi)$, we can take as generators
\[
\THH_2(A') \cong A'\left\{x,y\right\}.
\]
\item\label{item:y-case} If $p|k$ and $\phi'(\pi) | \pi^k$, we can take as generators
\[
\THH_2(A') \cong A'\left\{y\right\} \oplus (A/\phi'(\pi))\left\{\frac{\pi^k}{\phi'(\pi)}\right\}.
\]
\item\label{item:yprime-case} If $p\nmid k$ and $\phi'(\pi) | \pi^{k-1}$, we can take as generators
\[
\THH_2(A') \cong A'\left\{y'= y - \frac{k\pi^{k-1}}{\phi'(\pi)} x\right\} \oplus (A/\phi'(\pi))\left\{\frac{\pi^k}{\phi'(\pi)} x\right\}.
\]
\item\label{item:xprime-case} If $p\nmid k$ and $\pi^{k-1} | \phi'(\pi)$, we can take as generators
\[
\THH_2(A') \cong A'\left\{x'= x - \frac{\phi'(\pi)}{k\pi^{k-1}} y\right\} \oplus (A/\pi^{k-1})\left\{\pi y\right\}.
\]
\end{enumerate}
\end{lem}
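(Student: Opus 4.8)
The plan is to read off $\THH_2(A')$ directly from the DGA $(A'[x]\langle y\rangle\otimes\Lambda(d\pi),\partial)$ of Theorem~\ref{thm_quot}, using that in low degrees this complex of $A'$-modules is very small. Its chains are $A'\{d\pi\}$ in degree $1$, $A'\{x,y\}$ in degree $2$ (these are the only monomials of degree $2$), and $A'\{x\,d\pi,\,y\,d\pi\}$ in degree $3$. Since $\partial(d\pi)=0$ and $(d\pi)^2=0$, the Leibniz rule forces $\partial(x\,d\pi)=(\partial x)(d\pi)=\phi'(\pi)(d\pi)^2=0$ and likewise $\partial(y\,d\pi)=0$, so the differential into degree $2$ vanishes and
\[
\THH_2(A')\;\cong\;\ker\Big(f\colon (A/\pi^k)^2\longrightarrow A/\pi^k\Big),\qquad f(a,b)=a\,\phi'(\pi)+b\,k\pi^{k-1}.
\]
Everything then reduces to computing this kernel over the quotient ring $A'=A/\pi^k$.

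For the general isomorphism I would argue uniformly. Because divisibility in the DVR $A$ is total, one of $\phi'(\pi)$, $k\pi^{k-1}$ divides the other; after the corresponding unipotent change of the basis $\{x,y\}$ (replacing $x$ by $x-\tfrac{\phi'(\pi)}{k\pi^{k-1}}y$, or $y$ by $y-\tfrac{k\pi^{k-1}}{\phi'(\pi)}x$) the map $f$ kills one basis vector and sends the other to $\gcd(\phi'(\pi),k\pi^{k-1})\cdot d\pi$. Hence $\ker f\cong A'\oplus\operatorname{Ann}_{A'}(g)$ with $g=\gcd(\phi'(\pi),k\pi^{k-1})$, and $\operatorname{Ann}_{A'}(g)\cong A/\gcd(g,\pi^k)=A/\gcd(\phi'(\pi),k\pi^{k-1},\pi^k)$, which gives the first assertion (a length count over $A$, with $\operatorname{length}_A\ker f=k+\operatorname{length}_A\coker f$, provides a cross-check).

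The four itemized cases are then exactly the combinations of [which of $\phi'(\pi),k\pi^{k-1}$ divides the other] with [$p\mid k$ or $p\nmid k$], which select a convenient name for the free generator and for the generator of the torsion summand; since both divisibilities are comparable in the DVR the cases are exhaustive, with harmless overlap at the boundary valuations. Concretely: if $p\mid k$ then $k\pi^{k-1}\equiv 0$ in $A'$, so $y$ is already a cycle and $f(a,b)=a\phi'(\pi)$, whence $\ker f=A'\{y\}\oplus\operatorname{Ann}_{A'}(\phi'(\pi))\{x\}$ with the annihilator equal to $A'$ when $\pi^k\mid\phi'(\pi)$ (case~\ref{item:degen-case}) and equal to $(\tfrac{\pi^k}{\phi'(\pi)})\cong A/\phi'(\pi)$ when $\phi'(\pi)\mid\pi^k$ (case~\ref{item:y-case}); if $p\nmid k$ then $v_\pi(k\pi^{k-1})=k-1$, and one uses the cycle $y'=y-\tfrac{k\pi^{k-1}}{\phi'(\pi)}x$ when $\phi'(\pi)\mid\pi^{k-1}$ (leaving the torsion part $A/\phi'(\pi)$ on $\tfrac{\pi^k}{\phi'(\pi)}x$, case~\ref{item:yprime-case}) or the cycle $x'=x-\tfrac{\phi'(\pi)}{k\pi^{k-1}}y$ when $\pi^{k-1}\mid\phi'(\pi)$ (leaving $\operatorname{Ann}_{A'}(k\pi^{k-1})=(\pi)\cong A/\pi^{k-1}$ on $\pi y$, case~\ref{item:xprime-case}). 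The main obstacle, such as it is, is purely bookkeeping: one must check that each fraction $\tfrac{\pi^k}{\phi'(\pi)}$, $\tfrac{k\pi^{k-1}}{\phi'(\pi)}$, $\tfrac{\phi'(\pi)}{k\pi^{k-1}}$ lies in $A$ precisely under the divisibility hypothesis of its case, and that the two descriptions agree along the boundary values $v_\pi(\phi'(\pi))\in\{k-1,k\}$.
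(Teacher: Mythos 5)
Your proposal is correct and takes the same route as the paper, which states Lemma \ref{lem:degreetwo} without a separate argument as an immediate consequence of Theorem \ref{thm_quot}: since no boundaries land in degree $2$, $\THH_2(A')$ is the kernel of $\partial\colon A'\{x,y\}\to A'\{d\pi\}$, $(a,b)\mapsto (a\phi'(\pi)+bk\pi^{k-1})d\pi$, and your case-by-case linear algebra over $A/\pi^k$ (using that valuations in the DVR are totally ordered) is exactly the intended verification.
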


We now want to discuss the structure of $\THH_*(A')$ in the cases appearing in Lemma \ref{lem:degreetwo}. We start with the simplest case, which is analogous to Example \ref{ex:integersksmall}:

\begin{prop}
\label{prop:ksmall}
Assume we are in the situation of Theorem \ref{thm_quot} and that either $A$ is of equal characteristic, or $A$ is of mixed characteristic and we are in case (\ref{item:degen-case}) or (\ref{item:xprime-case}) of Lemma \ref{lem:degreetwo}, i.e. $p|k$ and $\pi^k | \phi'(\pi)$, or 
$p\nmid k$ and $\pi^{k-1} | \phi'(\pi)$. Then we have
\[
\THH_*(A') \cong \Z[x'] \otimes_\Z H_*\left(A'\langle y \rangle \otimes\Lambda(d\pi), \partial\right)    \qquad |x'| = 2
\]
which evaluates additively to
\begin{gather*}
\THH_{2k}(A'; \Z_p) \cong A/\pi^k \oplus \bigoplus_{i=1}^k A/\gcd(k\pi^{k-1},\pi^k)\\
\THH_{2k-1}(A'; \Z_p) \cong \bigoplus_{i=1}^k A/\gcd(k\pi^{k-1},\pi^k),
\end{gather*}
\end{prop}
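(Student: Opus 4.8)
The plan is to reduce the computation of the homology of the DGA $(A'[x]\langle y \rangle \otimes\Lambda(d\pi), \partial)$ of Theorem \ref{thm_quot} to that of the much smaller sub-DGA $(A'\langle y \rangle \otimes\Lambda(d\pi), \partial)$, by exhibiting the polynomial generator $x$ -- or a suitable modification $x'$ of it -- as a cycle which splits off a free polynomial factor $\Z[x']$, exactly as in Examples \ref{ex:integerskbig} and \ref{ex:integersksmall}.

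First I would produce, in each of the three cases, a degree $2$ cycle $x'$ whose leading term with respect to the filtration by $x$-degree is $x$. If $A$ is of equal characteristic, then $\partial x = 0$ and we take $x' = x$. In case (\ref{item:degen-case}) we have $\pi^k \mid \phi'(\pi)$, so $\phi'(\pi) = 0$ in $A'$ and again $\partial x = \phi'(\pi) d\pi = 0$; take $x' = x$. In case (\ref{item:xprime-case}) we have $p \nmid k$, so $k$ is a unit of $A$, and $\pi^{k-1} \mid \phi'(\pi)$, so $\tfrac{\phi'(\pi)}{k\pi^{k-1}}$ is a well-defined element of $A'$; setting $x' = x - \tfrac{\phi'(\pi)}{k\pi^{k-1}} y$ and using $\partial y = k\pi^{k-1} d\pi$ gives $\partial x' = \phi'(\pi)d\pi - \phi'(\pi)d\pi = 0$. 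In each case the $A'\langle y \rangle \otimes\Lambda(d\pi)$-algebra map
\[
(\Z[x'], 0) \otimes_\Z (A'\langle y \rangle \otimes\Lambda(d\pi), \partial) \longrightarrow (A'[x]\langle y \rangle \otimes\Lambda(d\pi), \partial)
\]
sending $x'$ to $x'$ and restricting to the identity on the second factor is a map of DGAs (a chain map precisely because $x'$ is a cycle), and it is an isomorphism: with respect to the free $A'\langle y \rangle \otimes\Lambda(d\pi)$-module bases $\{(x')^j\}_{j \geq 0}$ of the source and $\{x^j\}_{j \geq 0}$ of the target it is given by a unitriangular change-of-basis matrix, since $(x')^j = x^j + (\text{terms of lower } x\text{-degree})$. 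This is the filtration argument of Examples \ref{ex:integerskbig} and \ref{ex:integersksmall}.

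Since $\Z[x']$ is free over $\Z$ with zero differential, the Künneth theorem over $\Z$ then yields a ring isomorphism $\THH_*(A') \cong \Z[x'] \otimes_\Z H_*(A'\langle y \rangle \otimes\Lambda(d\pi), \partial)$, and it remains to compute the right-hand homology. The complex $(A'\langle y \rangle \otimes\Lambda(d\pi), \partial)$ splits degreewise: it is $A'\{y^{[n]}\}$ in degree $2n$ and $A'\{y^{[n]} d\pi\}$ in degree $2n+1$, the differential from degree $2n$ to degree $2n-1$ is multiplication by $k\pi^{k-1}$ (sending $y^{[n]} \mapsto k\pi^{k-1} y^{[n-1]} d\pi$), and the differential out of odd degrees vanishes since $(d\pi)^2 = 0$. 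Hence $H_0 = A' = A/\pi^k$, while for $n \geq 1$ one gets $H_{2n-1} = A'/(k\pi^{k-1}) \cong A/\gcd(k\pi^{k-1}, \pi^k)$ and $H_{2n} = \operatorname{Ann}_{A'}(k\pi^{k-1}) \cong A/\gcd(k\pi^{k-1}, \pi^k)$; the latter identification uses $\operatorname{Ann}_{A/\pi^k}(\pi^m) = \pi^{\max(k-m,0)} A / \pi^k A \cong A/\pi^{\min(m,k)}$ for $m = v_\pi(k\pi^{k-1})$, which agrees with $A/\gcd(k\pi^{k-1},\pi^k)$. Collecting the summands of $\Z[x'] \otimes_\Z H_*(A'\langle y \rangle \otimes\Lambda(d\pi), \partial)$ in each total degree (the $A/\pi^k$ in even degrees coming from the $H_0$-summand, everything else from the higher $H_i$) then gives the asserted formula for $\THH_*(A';\Z_p)$.

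The main obstacle is bookkeeping rather than a genuine difficulty: one must check that the single recipe for $x'$ above really covers all cases of the proposition (in particular that $\tfrac{\phi'(\pi)}{k\pi^{k-1}}$ makes sense and that the resulting map is an isomorphism of DGAs, not merely of underlying graded modules), and one must be careful computing homology over the non-domain $A'$, where the relevant annihilators and quotients have to be identified with $\gcd$-quotients. No step is deep, but the case analysis is where attention is required.
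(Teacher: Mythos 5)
Your proposal is correct and follows the paper's own proof essentially verbatim: the same choice of $x'$ in each case (namely $x'=x$, respectively $x' = x - \tfrac{\phi'(\pi)}{k\pi^{k-1}}y$), the same splitting map of DGAs shown to be an isomorphism by the filtration/unitriangularity argument, then K\"unneth and the elementary computation of $H_*\left(A'\langle y\rangle\otimes\Lambda(d\pi),\partial\right)$ via annihilators and quotients by $k\pi^{k-1}$ in $A'$. You merely spell out details (unit-ness of $k$ when $p\nmid k$, the $\gcd$ identifications) that the paper leaves implicit, and they check out.
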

\begin{proof}
We set $x'=x$ if $A$ is of equal characteristic or if $p|k$ and $\pi^k | \phi'(\pi)$, and $x' = x - \frac{\phi'(\pi)}{k\pi^{k-1}}y$ if $p\nmid k$ and $\pi^{k-1} | \phi'(\pi)$. Then $\partial x'=0$. We get a map of DGAs
\[
\left( \Z[x'] , 0\right)\otimes_\Z \left(A'\langle y \rangle\otimes\Lambda(d\pi), \partial\right) \to \left(A'[x]\langle y \rangle \otimes\Lambda(d\pi), \partial\right).
\]
which is an isomorphism by a straightforward filtration argument. By Künneth, we get an isomorphism
\[
\THH_*(A') \cong \Z[x'] \otimes_\Z H_*\left(A'\langle y \rangle\otimes\Lambda(d\pi), \partial\right).
\]
The additive description of the homology is easily seen from the fact that $\partial y^{[i]} = k\pi^{k-1} (d\pi) y^{[i-1]}$.
\end{proof}

\begin{remark}
In fact, we can identify $H_*\left(A'\langle y \rangle\otimes\Lambda(d\pi), \partial\right)$ with the Hochschild homology $\HH_*(\Z[z]/z^k\otimes A' / A')$. Compare Section \ref{sec:othersses}. 
\end{remark}

Essentially, the takeaway of Proposition \ref{prop:ksmall} is that in cases (\ref{item:degen-case}) and (\ref{item:xprime-case}) of Lemma \ref{lem:degreetwo} we can modify the polynomial generator $x$ to a cycle which splits a polynomial factor off $\THH(A')$. 

One would hope that, complementarily, in cases \ref{item:y-case} and \ref{item:yprime-case}, we can split off a divided power factor. This is only true after more restrictive conditions. To formulate those, we will require the following lemma on the valuation of factorials:
\begin{lem}[Legendre]
\label{lem:legendre}
For a natural number $l \geq 1$ and a prime $p$ we have
\[
v_p(l!) < \frac{l}{p-1}
\]
\end{lem}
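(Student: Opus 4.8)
The plan is to use the classical formula of Legendre for the $p$-adic valuation of a factorial, namely
\[
v_p(l!) = \sum_{j \geq 1} \left\lfloor \frac{l}{p^j} \right\rfloor,
\]
and then bound the right-hand side by an honest geometric series. First I would recall (or quickly reprove) this formula: among $1, 2, \dots, l$ there are exactly $\lfloor l/p^j \rfloor$ multiples of $p^j$, and each integer $m \leq l$ contributes $v_p(m) = \#\{j \geq 1 : p^j \mid m\}$ to the sum $v_p(l!) = \sum_{m=1}^l v_p(m)$; swapping the order of summation gives the displayed identity. The sum is finite since $\lfloor l/p^j\rfloor = 0$ once $p^j > l$.

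Next I would drop the floor functions: $\lfloor l/p^j \rfloor \leq l/p^j$, with at least one inequality strict (for instance the $j=1$ term, since $l/p$ is strictly larger than $\lfloor l/p\rfloor$ unless $p \mid l$, and even when $p \mid l$ one can instead note that the top nonzero term loses something, or more cleanly just observe the series is never fully tight). Hence
\[
v_p(l!) \leq \sum_{j \geq 1} \frac{l}{p^j} = l \cdot \frac{1/p}{1 - 1/p} = \frac{l}{p-1},
\]
and the inequality is strict because the geometric series is an infinite sum of positive terms while $v_p(l!)$ is a finite sum of the truncated (and floored) terms — concretely, the $j$ large enough that $p^j > l$ contribute nothing to $v_p(l!)$ but contribute strictly positively to $\sum_j l/p^j$. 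Since $l \geq 1$ there is always at least one such $j$, so the strict inequality $v_p(l!) < \frac{l}{p-1}$ holds.

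The only mildly delicate point — which I would state carefully — is the justification that the inequality is \emph{strict} rather than merely $\leq$; this is immediate from the observation above that the infinite tail of $\sum_{j\geq 1} l/p^j$ beyond the range $p^j \leq l$ is a strictly positive quantity absent from $v_p(l!)$. There is no real obstacle here; the whole argument is a two-line estimate once Legendre's formula is in hand, and I would keep the proof correspondingly short.
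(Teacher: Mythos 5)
Your proof is correct and follows essentially the same route as the paper: Legendre's formula $v_p(l!) = \sum_{j\geq 1}\lfloor l/p^j\rfloor$ followed by comparison with the geometric series $\sum_{j\geq 1} l/p^j = \frac{l}{p-1}$. Your explicit justification of strictness (the tail terms with $p^j > l$ vanish in the floor sum but are positive in the series) is the right reason and is implicit in the paper's argument.
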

\begin{proof}
We count how often $p$ divides $l!$. Every multiple of $p$ not greater than $l$ provides a factor of $p$, every multiple of $p^2$ provides an additional factor of $p$, and so on. We get the following formula, due to Legendre:
\[
v_p(l!) = \sum_{i\geq 1} \left\lfloor\frac{l}{p^i}\right\rfloor,
\] 
where $\lfloor-\rfloor$ denotes rounding down to the nearest integer. In particular,
\[
v_p(l!) < \sum_{i \geq 1} \frac{l}{p^i} = \frac{l}{p-1}.\qedhere
\]
\end{proof}

\begin{prop}
\label{prop:kbig}
Assume we are in the situation of Theorem \ref{thm_quot}, and for $A$ of equal characteristic $p|k$, and for $A$ of mixed characteristic either $p|k$ (i.e. we are in case (\ref{item:degen-case}) or (\ref{item:y-case}) of Lemma \ref{lem:degreetwo}), or we have the following strengthening of case (\ref{item:yprime-case}): 
\[
v_p\left(\frac{k\pi^{k-1}}{\phi'(\pi)}\right)\geq \frac{1}{p-1}
\]
Then we have an isomorphism of rings
\[
\THH_*(A') \cong \pi_*(\THH(A)\otimes_A A') \otimes_\Z \Z\langle y' \rangle  \qquad |y'| = 2
\]
In particular, we get additively
\begin{gather*}
\THH_{2k}(A'; \Z_p) \cong A/\pi^k \oplus \bigoplus_{i=1}^k A/\gcd(i\phi'(\pi), \pi^k)\\
\THH_{2k-1}(A'; \Z_p) \cong \bigoplus_{i=1}^kA/\gcd(i\phi'(\pi), \pi^k) \ .
\end{gather*}
\end{prop}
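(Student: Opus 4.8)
The plan is to mimic the strategy of Example \ref{ex:integerskbig}: produce a new divided-power generator $y'$ in the kernel of $\partial$, show it generates a free divided-power subalgebra that splits off the DGA of Theorem \ref{thm_quot} as a tensor factor, and then apply the Künneth theorem together with Proposition \ref{prop:thhdga} to identify the remaining factor with $\pi_*(\THH(A)\otimes_A A')$. The equal-characteristic case is easiest since there $\partial x = 0$ and $\partial y = k\pi^{k-1}d\pi$; one checks $y' := y$ already works (and that $k\pi^{k-1}=0$ in $A'$ is not automatic, but the claim is about the DGA structure), so the computation is really about mixed characteristic, and we do that in detail.

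\emph{Constructing $y'$.} In mixed characteristic set $c := \frac{k\pi^{k-1}}{\phi'(\pi)} \in A$, which lies in $A$ precisely under our hypotheses (cases \ref{item:degen-case}, \ref{item:y-case}, or the strengthened \ref{item:yprime-case}), so that $\partial y = c\,\phi'(\pi)d\pi = c\,\partial x$. The naive candidate $y - cx$ has $\partial(y-cx)=0$, but we must exhibit divided powers on it inside $A'\langle y\rangle \otimes\Lambda(d\pi)$. Following Example \ref{ex:integerskbig}, I would \emph{define}
\[
(y')^{[i]} := \sum_{l\geq 0} (-1)^l \frac{c^l}{l!}\, y^{[i-l]} x^l,
\]
and check that each coefficient $c^l/l!$ makes sense in $A'$, i.e. $v_p(c^l) \geq v_p(l!)$. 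By Lemma \ref{lem:legendre} it suffices that $v_p(c) \geq \frac{1}{p-1}$; when $p\mid k$ this holds because $v_p(c) = v_p(k) + v_p(\pi^{k-1}/\phi'(\pi)) \geq v_p(k) \geq v_p(p) = \frac{1}{p-1}\cdot(p-1)\cdot v_p(p)$ — more carefully, $v_p(k)\geq 1 > \frac{1}{p-1}$ when $p\mid k$ and $p$ odd, while for $p=2$ one has $v_2(k)\geq 1 = \frac{1}{p-1}$, which is exactly the non-strict inequality needed — and when $p\nmid k$ it is precisely the extra hypothesis $v_p(c)\geq\frac{1}{p-1}$ we imposed. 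One then verifies the divided-power axioms $(y')^{[i]}(y')^{[j]} = \binom{i+j}{i}(y')^{[i+j]}$ and $(y')^{[1]} = y - cx$ by a direct (if slightly tedious) manipulation of binomial coefficients, and that $\partial (y')^{[i]} = 0$ for all $i$ — the latter from $\partial y^{[i-l]} = k\pi^{k-1}y^{[i-l-1]}d\pi = c\phi'(\pi)y^{[i-l-1]}d\pi$ and $\partial x = \phi'(\pi)d\pi$, so the terms telescope.

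\emph{Splitting and Künneth.} Having $y'$, I would build the map of DGAs
\[
\left(A'[x]\otimes\Lambda(d\pi),\partial\right)\otimes_\Z \left(\Z\langle y'\rangle, 0\right) \longrightarrow \left(A'[x]\langle y\rangle\otimes\Lambda(d\pi),\partial\right)
\]
sending $y'\mapsto y - cx$ and its divided powers to the formulas above. This is an isomorphism of underlying graded modules by a straightforward filtration argument: filter both sides by powers of $y$ (resp. $y'$); on associated graded the map becomes the identity since $(y')^{[i]} = y^{[i]} + (\text{lower order in }y)$. Then the Künneth theorem, applied to the tensor factor $\Z\langle y'\rangle$ over $\Z$ (which is $\Z$-free, hence flat, so no $\Tor$ terms), together with Proposition \ref{prop:thhdga} identifying $H_*(A'[x]\otimes\Lambda(d\pi),\partial)$ with $\pi_*(\THH(A)\otimes_A A')$, gives the ring isomorphism $\THH_*(A')\cong \pi_*(\THH(A)\otimes_A A')\otimes_\Z\Z\langle y'\rangle$. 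The additive formula then follows by combining the explicit count of $\pi_*(\THH(A)\otimes_A A')$ from Proposition \ref{prop:thhdga} (whose homology in degree $2n-1$ and the torsion part of degree $2n$ is $A/\gcd(i\phi'(\pi),\pi^k)$, $1\le i\le n$) with the fact that tensoring with $\Z\langle y'\rangle$ contributes a copy in every even degree shift.

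\emph{The main obstacle} is the purely combinatorial verification that the formulas for $(y')^{[i]}$ genuinely define a divided-power structure — that the $A'$-coefficients $c^l/l!$ are well-defined (handled by Legendre, as above) and that the divided-power product formula holds after the substitution $y'=y-cx$. This is the same computation as in Example \ref{ex:integerskbig} but with $c$ in place of $kp^{k-1}$; I would either carry it out by expanding both sides and matching coefficients using the Vandermonde identity, or — more cleanly — observe that over the torsion-free ring $A[x]\langle y\rangle$ the element $y-cx$ visibly has divided powers given by the displayed formula (it is just the PD-structure on a polynomial generator shifted by an element that also has divided powers, namely $cx$ with $(cx)^{[l]}=c^lx^l/l!$ legitimate in $A[1/p]$), and that this formula has coefficients landing in $A'$ under our hypotheses, so it descends. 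The equal-characteristic and $p\mid k$ cases then need the boundary remark that $v_p(k)\geq \frac{1}{p-1}$, with the $p=2$ edge case using the non-strict form of Legendre's bound, which is why the hypothesis is stated as $\geq$ rather than $>$.
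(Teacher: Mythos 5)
Your handling of the genuinely new case --- the strengthened form of case (\ref{item:yprime-case}), where $p\nmid k$ --- is exactly the paper's argument: set $y'=y-\tfrac{k\pi^{k-1}}{\phi'(\pi)}x$, define its divided powers by the displayed sum, check integrality of the coefficients via Lemma \ref{lem:legendre}, and split off $\Z\langle y'\rangle$ by the filtration and K\"unneth argument, using Proposition \ref{prop:thhdga}. That part is fine. The genuine gap is in your treatment of the mixed-characteristic cases with $p\mid k$ (cases (\ref{item:degen-case}) and (\ref{item:y-case})), which you also run through the uniform correction $y'=y-cx$ with $c=\tfrac{k\pi^{k-1}}{\phi'(\pi)}$. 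Two of your claims there fail. First, $c$ need not lie in $A$: in case (\ref{item:degen-case}) one only knows $\pi^k\mid\phi'(\pi)$, and $\phi'(\pi)$ can have much larger valuation than $k\pi^{k-1}$; for example $A=\Z_2[\sqrt[4]{2}]$, $\pi=\sqrt[4]{2}$, $k=2$ has $v_\pi(\phi'(\pi))=v_\pi(4\pi^3)=11$ while $v_\pi(k\pi^{k-1})=v_\pi(2\pi)=3$, so $c\notin A$. Second, your inequality $v_p(c)=v_p(k)+v_p(\pi^{k-1}/\phi'(\pi))\geq v_p(k)$ silently assumes $\phi'(\pi)\mid\pi^{k-1}$, which is the hypothesis of cases (\ref{item:yprime-case}) and (\ref{item:xprime-case}) and is not available in cases (\ref{item:degen-case}), (\ref{item:y-case}). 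It is false in general: for $A=\Z_2[i]$, $\pi=1+i$, $\phi(z)=z^2-2z+2$, $k=2$ (case (\ref{item:y-case})) one gets $c=2\pi/\phi'(\pi)$, a unit multiple of $\pi$, so $v_2(c)=\tfrac12<\tfrac{1}{p-1}=1$, and already the coefficient $c^4/4!$ in your formula for $(y')^{[4]}$ has negative valuation. So the uniform construction breaks down precisely in the $p\mid k$ cases.

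The fix is the observation you invoked only in equal characteristic, and it is what the paper does: when $p\mid k$ no correction is needed at all, because $\pi\mid p\mid k$ gives $v_\pi(k\pi^{k-1})\geq e+k-1\geq k$, hence $k\pi^{k-1}=0$ in $A'=A/\pi^k$ and every $y^{[i]}$ is already a cycle; simply take $y'=y$ with $(y')^{[i]}=y^{[i]}$. (In equal characteristic, $p\mid k$ even forces $k=0$ in $A$, so your parenthetical worry there is moot.) With this case distinction --- $y'=y$ when $p\mid k$, and $y'=y-\tfrac{k\pi^{k-1}}{\phi'(\pi)}x$ under the valuation hypothesis when $p\nmid k$ --- the rest of your argument (the isomorphism of DGAs by the filtration argument, Proposition \ref{prop:thhdga}, and K\"unneth over $\Z$) is correct and coincides with the paper's proof.
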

\begin{proof}
If $p|k$, all $y^{[i]}$ are cycles, and we set $y' := y$. If
\[
v_p\left(\frac{k\pi^{k-1}}{\phi'(\pi)}\right)\geq \frac{1}{p-1},
\]
we set $y' = y - \frac{k\pi^{k-1}}{\phi'(\pi)}x$. In either case, $(y')$ admits divided powers, defined in the first case just by $(y')^{[i]} = y^{[i]}$, and in the second case by
\[
(y')^{[i]} = \sum_{l\geq 0} (-1)^l\frac{k^l \pi^{l(k-1)}}{\phi'(\pi)^l l!}y^{[i-l]} x^l,
\]
which is well-defined because
\[
v_p\left(\frac{k^l\pi^{l(k-1)}}{\phi'(\pi)^l}\right) \geq \frac{l}{p-1} > v_p(l!)
\]
by assumption and Lemma \ref{lem:legendre}.

We get a map of DGAs
\[
\left( (\Z/p^k)[x] \otimes\Lambda(d\pi), \partial\right)\otimes \left(\Z\langle y' \rangle, 0\right) \to \left((\Z/p^k)[x]\langle y \rangle \otimes\Lambda(d\pi), \partial\right) 
\]
which is an isomorphism by a straightforward filtration argument. By Proposition \ref{prop:thhdga} and K\"unneth, we then get
\[
\THH_*(A'; \Z_p) \cong \pi_*(\THH(A)\otimes_A A')\otimes_\Z \Z\langle y'\rangle\qedhere
\]
\end{proof}

Finally, we want to illustrate that the case `in between' Propositions \ref{prop:kbig} and \ref{prop:ksmall} is more complicated and probably doesn't admit a simple uniform description.

\begin{example}
\label{ex:inbetween}

For a mixed characteristic CDVR $A$ with perfect residue field and $A'=A/\mathfrak{m}^k=A/\pi^k$, Theorem \ref{thm_quot} implies that the even-degree part of $\THH_*(A')$ is given by the kernel of $\partial$ in the DGA $(A'[x]\langle y\rangle \otimes \Lambda(d\pi),\partial)$. We can thus consider $\bigoplus \THH_{2n}(A')$ as a subring of $A'[x]\langle y\rangle$.

Suppose we are in the situation of case (\ref{item:yprime-case}) of Lemma \ref{lem:degreetwo}. Then a basis for $\THH_2(A')$ is given by
\begin{gather*}
y- \frac{k\pi^{k-1}}{\phi'(\pi)} x,\\
\frac{\pi^k}{\phi'(\pi)}x.
\end{gather*}
Now suppose the valuations of the coefficients $\frac{k\pi^{k-1}}{\phi'(\pi)}$ and $\frac{\pi^k}{\phi'(\pi)}$ are positive, but small, say smaller than $\frac{1}{p}$. Then observe that
\[
\left(y- \frac{k\pi^{k-1}}{\phi'(\pi)} x\right)^p = \frac{k^p\pi^{p(k-1)}}{\phi'(\pi)^p} x^p \text{ mod } p,
\]
in particular, under our assumptions, $\left(y- \frac{k\pi^{k-1}}{\phi'(\pi)} x\right)^p$ is divisible by $\pi$ but not $p$. Similarly,
\[
\left(\frac{\pi^k}{\phi'(\pi)}x\right)^p = \frac{\pi^{kp}}{\phi'(\pi)^p} x^p
\]
is divisible by $\pi$ but not $p$. So both of our generators of $\THH_2(A')$ are nilpotent, but cannot admit divided powers. It is not hard to see that this holds more generally for any element of $\THH_2(A')$ that is nonzero mod $\pi$. So in this situation, $\THH_*(A')$ cannot admit a description similar to Proposition \ref{prop:ksmall} or \ref{prop:kbig}.

One example for $A'$ fulfilling the requirements used here is given by $A=\Z_p[\sqrt[e]{p}]$ with uniformizer $\pi=\sqrt[e]{p}$, and $k=e+1$, as long as  $p\nmid  e, k$ and $e>2p$.
\end{example}

\section{The general spectral sequences}

We now want to establish a spectral sequence to compute absolute $\THH$ from relative ones of which Proposition \ref{prop:ssconstruction} is a special case. This will come in two slightly different flavours. We 
let $R \to A$ be a map of commutative rings and let $\bS_R$ be a lift of $R$ to the sphere, i.e. a commutative ring spectrum with an equivalence
\[
\bS_R \otimes_\bS \mathbb{Z} \simeq R. 
\]
The example that will lead to the spectral sequence of Proposition \ref{prop:ssconstruction} is $R = \Z[z]$ and $\bS_R = \bS[z]$. 

Recall that  for every commutative ring $R$ we can form the derived de Rham complex $L\Omega_{R/\Z}$, which has a filtration whose associated graded is in degree $* = i$  given by a shift the non-abelian derived functor of the  $i$-term of the de Rham complex $\Omega^i_{R/\Z}$ (considered as a functor in $R$). Concretely this is done by simplically resolving $R$ by polynomial algebras $\Z[x_1,...,x_k]$, taking $\Omega^i_{\bullet/\Z}$ levelwise and considering the result via Dold-Kan as an object of $\mathcal{D}(\Z)$. This derived functor agrees with the $i$-th derived exterior power $\Lambda^i L_{R/\Z}$ of the cotangent complex $L_{R/\Z}$. For $R$ smooth over $\Z$ this just recovers the usual terms in the de Rham complex. In general one should be aware that $L\Omega_{R/\Z}$ is a filtered chain complex, hence has two degrees, one homological and one filtration degree. We shall only need its associated graded $L\Omega^*_{R/\Z}$ which is a graded chain complex. We warn the reader that the homological direction comes from deriving and has nothing to do with the de Rham differential.

\begin{prop}\label{prop:spectralsequencegeneral}
In the situation described above
there are two multiplicative, convergent spectral sequences
\begin{align*}
\pi_i\Big(\THH(A/\bS_R) \otimes_{R} \HH_j(R / \Z)\Big) \Rightarrow \pi_{i+j} \THH(A)  \\
\pi_i\Big(\THH(A/\bS_R) \otimes_{R} L\Omega^j_{R / \Z}\Big) \Rightarrow \pi_{i+j} \THH(A)  \ .
\end{align*}
Here we use homological Serre grading, i.e. the displayed bigraded ring is the $E_2$-page and the $d^r$-differential has $(i,j)$-bidegree $(-r, r-1)$. A similar spectral sequence with everything all terms $p$-completed (including the tensor products) exists as well.
\end{prop}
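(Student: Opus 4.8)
The plan is to realize both spectral sequences as the spectral sequences of a filtered object, obtained by base-changing $\THH(A)$ along a suitable filtration of $\mathbb{Z}$ over $\THH(\bS_R)$. The key input is the identity $\THH(A) \simeq \THH(A/\bS_R) \otimes_{\THH(\bS_R)} \THH(\bS_R)$, rewritten using $\THH(\bS_R) \otimes_{\bS_R} \mathbb{Z} \simeq \THH(R/\mathbb{Z}) =: \HH(R/\mathbb{Z})$ (recall $\HH(R/\mathbb{Z}) = \THH(R/\mathbb{Z})$ is the derived/Shukla version) together with the projection formula
\[
\THH(A) \simeq \THH(A/\bS_R) \otimes_{\THH(\bS_R)} \bS_R \otimes_{\bS_R}\mathbb{Z} \simeq \THH(A/\bS_R) \otimes_R \HH(R/\mathbb{Z}),
\]
where I use that $\THH(A/\bS_R)$ is naturally a module over $\THH(\bS_R)$, hence over $\HH(R/\mathbb{Z})$ after base change, and over $R = \pi_0$. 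So $\THH(A) = M \otimes_{\HH(R/\mathbb{Z})} R$ for the $\HH(R/\mathbb{Z})$-module $M = \THH(A/\bS_R)$, which is exactly the general setup invoked in the proof of Lemma~\ref{diff_quot}.

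First I would construct the first spectral sequence. Filter $M = \THH(A/\bS_R)$ as an $\HH(R/\mathbb{Z})$-module by the Postnikov/skeletal filtration of $\HH(R/\mathbb{Z})$: concretely, resolve $R$ over $\mathbb{Z}$ and write $\HH(R/\mathbb{Z})$ as the totalization (or rather colimit) of its cyclic bar complex, inducing a filtration on $\THH(A) = M\otimes_{\HH(R/\mathbb{Z})} R$ whose associated graded in filtration degree $j$ is $M \otimes_R \HH_j(R/\mathbb{Z})[j]$ — since tensoring down to $R$ along $\HH(R/\mathbb{Z}) \to R$ splits the bar filtration into its homotopy groups, which are flat is not needed because we take homotopy groups of the tensor product, not the tensor product of homotopy groups. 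This gives the $E_1$ (equivalently $E_2$ after reindexing) page $\pi_i\big(\THH(A/\bS_R)\otimes_R \HH_j(R/\mathbb{Z})\big)$. Multiplicativity follows because the filtration is by an $\mathbb{E}_\infty$-algebra filtration of $\HH(R/\mathbb{Z})$ and $M$ is an algebra over it (it is $\THH$ of a ring), so the whole construction is lax symmetric monoidal; convergence is automatic since the filtration is exhaustive and, on each homotopy group, eventually constant (for $A$ connective everything is connective and the filtration is increasingly connective). The $p$-completed variant is obtained by $p$-completing the entire filtered object before passing to homotopy groups, which preserves the multiplicative and convergence properties since $p$-completion is symmetric monoidal on the relevant subcategory and, for bounded $p^\infty$-torsion, commutes with the relevant colimits.

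Second, for the de Rham version I would instead use the conjugate/HKR filtration on $\HH(R/\mathbb{Z})$, i.e. the filtration whose associated graded is $L\Omega^j_{R/\mathbb{Z}}[j] = (\Lambda^j L_{R/\mathbb{Z}})[j]$. This filtration exists functorially for all commutative rings $R$ (by left Kan extension from polynomial algebras, where it is the ordinary HKR decomposition of $\HH_*(\mathbb{Z}[x_1,\dots,x_n]/\mathbb{Z}) = \Omega^*_{\mathbb{Z}[x]/\mathbb{Z}}$), it is an $\mathbb{E}_\infty$-filtration, and it refines (is compatible with) the situation above; base-changing $M$ along it exactly as before produces the second spectral sequence with $E_2$-term $\pi_i\big(\THH(A/\bS_R)\otimes_R L\Omega^j_{R/\mathbb{Z}}\big)$. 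For $R = \mathbb{Z}[z]$, which is smooth over $\mathbb{Z}$, we have $L_{R/\mathbb{Z}} = \Omega^1_{R/\mathbb{Z}} = R\{dz\}$ concentrated in degree $0$, so $\HH_*(R/\mathbb{Z}) = \Lambda_R(dz) = L\Omega^*_{R/\mathbb{Z}}$ and the two spectral sequences agree and recover Proposition~\ref{prop:ssconstruction}; the differential bidegree $(-r, r-1)$ is the usual homological Serre indexing coming from a filtered colimit. The main obstacle I anticipate is not conceptual but bookkeeping: one must verify that taking $-\otimes_{\HH(R/\mathbb{Z})} R$ really does split the bar (resp. HKR) filtration into a direct sum of shifted copies of the homotopy groups (resp. of $L\Omega^j$), i.e. that the filtration on the base becomes, after the base change, a filtration with associated graded the ``naive'' tensor product — this is where one uses that $R \to \HH(R/\mathbb{Z})$ admits a retraction and that base-change along a map killing all higher homotopy collapses the spectral sequence of the base to its $E_1$-page, so that the induced filtration on $M\otimes_{\HH(R/\mathbb{Z})} R$ has the stated associated graded — and that multiplicativity and (in the $p$-complete case) convergence survive all of this, which I would handle by working throughout in the filtered derived category and only passing to homotopy groups at the very end.
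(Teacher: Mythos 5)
Your proposal hinges on the identity $\THH(A) \simeq \THH(A/\bS_R)\otimes_R \HH(R/\Z)$ (equivalently, on filtering a module $M=\THH(A/\bS_R)$ and claiming $\THH(A) = M\otimes_{\HH(R/\Z)}R$), and this identity is false: you have the direction of the base change backwards. The correct relation is
\[
\THH(A/\bS_R) \;\simeq\; \THH(A)\otimes_{\HH(R/\Z)} R,
\]
where $\THH(A)$ is an $\HH(R/\Z)=\THH(\bS_R)\otimes_\bS\Z$-module (note also that $\HH(R/\Z)\simeq \THH(\bS_R)\otimes_{\bS}\Z$, not $\otimes_{\bS_R}$). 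Moreover the $\HH(R/\Z)$-module structure on $\THH(A/\bS_R)$ factors through the augmentation $\HH(R/\Z)\to R$, so $\THH(A/\bS_R)\otimes_{\HH(R/\Z)}R$ does not recover $\THH(A)$, and the filtered object you describe has $\THH(A/\bS_R)$, not $\THH(A)$, as its underlying object — it cannot converge to the stated abutment. Concretely, if your identity held then for $R=\Z[z]$, $\bS_R=\bS[z]$, $A=\Z_p$ (with $z\mapsto p$) one would get $\THH_*(\Z_p;\Z_p)\cong\Z_p[x]\otimes\Lambda(dz)$, i.e.\ the spectral sequence would be forced to degenerate with no differentials; this contradicts Lemma \ref{lem:boekstedtelem} ($d^2(x)=\phi'(\pi)dz\neq 0$) and the Lindenstrauss--Madsen answer. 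The nontrivial differentials are exactly the failure of the splitting you assumed as input.

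The repair is essentially the paper's argument, and your remaining ingredients (Whitehead tower versus HKR filtration, multiplicativity, completeness/convergence, the $p$-complete variant) would then go through: regard $\THH(A)$ itself as an $\HH(R/\Z)$-module, filter it by
\[
\THH(A)\otimes_{\HH(R/\Z)}\tau_{\geq j}\HH(R/\Z)
\qquad\text{resp.}\qquad
\THH(A)\otimes_{\HH(R/\Z)}F^j_{\HKR},
\]
so that the $j=0$ term is $\THH(A)$ and the filtration is complete because the pieces become highly connective. On associated graded pieces the $\HH(R/\Z)$-module structure factors through $\HH(R/\Z)\to R$, so the graded pieces are $\HH_j(R/\Z)[j]$ resp.\ $(\Lambda^jL_{R/\Z})[j]$ as $R$-modules, and the base-change computation
\[
\THH(A)\otimes_{\HH(R/\Z)}R \;\simeq\; \THH(A)\otimes_{\THH(\bS_R)}\bS_R \;\simeq\; \THH(A/\bS_R)
\]
identifies the associated graded of the filtration of $\THH(A)$ with $\THH(A/\bS_R)\otimes_R\HH_j(R/\Z)[j]$ resp.\ $\THH(A/\bS_R)\otimes_R(\Lambda^jL_{R/\Z})[j]$, giving the two stated $E_2$-pages. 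Your instinct that the argument parallels the setup used in Lemma \ref{diff_quot} is right, but there the module $M$ whose homotopy is the abutment plays the role of $\THH(A)$, and $M\otimes_{\HH(\Z[z])}\Z[z]$ plays the role of the relative term $\THH(A/\bS[z])$ — the opposite of the assignment you made.
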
 
\begin{proof}
We consider the lax symmetric monoidal functor
\begin{align}\label{basechange2}
 \Mod_{\HH(R/\Z)}  &\to \Mod_{\THH(A)}  \\
 M &\mapsto \THH(A) \otimes_{\HH(R/\Z)} M \nonumber
\end{align}
where we have used the equivalence
$
\HH(R/\Z) = \THH(\bS_R) \otimes_\bS \Z
$
to get the $\HH(R/\Z)$-module structure on $\THH(A)$. 

Now we filter $\HH(R/\Z)$ by two different filtrations: either by the 
Whitehead tower
\[
... \to  \tau_{\geq 2} \HH(R/\Z)  \to \tau_{\geq 1} \HH(R/\Z) \to \tau_{\geq 0} \HH(R/\Z) = \HH(R/\Z)
\]
or by the HKR-filtration \cite[Proposition IV.4.1]{NS}
\[
... \to F^2_{\HKR} \to F^1_{\HKR}\to F^0_{\HKR} = \HH(R/\Z) \ .
\]
The HKR-filtration is in fact the derived version of the Whitehead tower, in particular for $R$ smooth (or more generally ind-smooth) both filtrations agree. Both filtrations are complete and multiplicative, in particular they are filtrations through $\HH(R/\Z)$ modules. On the associated graded pieces the $\HH(R/\Z)$-module structure factors through the map $\HH(R/\Z) \to R$ of ring spectra. This is obvious for the Whitead tower and thus also follows for the HKR filtration. Thus the graded pieces are only $R$-modules and as such given by 
$\HH_i(R)$ in the first case and by $\Lambda^j L_{R/ \Z}$ in the second case.

After applying the functor \eqref{basechange2} to this filtration we obtain two multiplicative filtrations of $\THH(A)$:
\[
\THH(A) \otimes_{\HH(R/\Z)}  \big(\tau_{\geq j} \HH(R/\Z)\big)    \qquad \text{and} \qquad  \THH(A) \otimes_{\HH(R/\Z)} F^j_{\HKR}  
\]
which are complete since the connectivity of the pieces tends to infinity. Let us identify the associated gradeds for the HKR filtration, the case of the Whitehead tower works the same:
\begin{align*}
 \THH(A) \otimes_{\HH(R/\Z)} \Lambda^j L_{R/ \Z}& \simeq  \THH(A) \otimes_{\HH(R/\Z)} R \otimes_R \Lambda^j L_{R/ \Z}\\
 & \simeq (\THH(A) \otimes_{\THH(\bS_R)\otimes_{\bS} \Z} (\bS_R \otimes_{\bS} \Z)) \otimes_R \Lambda^j L_{R/ \Z} \\
  & \simeq (\THH(A) \otimes_{\THH(\bS_R)} \bS_R) \otimes_R \Lambda^j L_{R/ \Z} \\
  & \simeq (\THH(A / \bS_R) \otimes_R \Lambda^j L_{R/ \Z} \ .
\end{align*}
Thus by the standard construction we get conditionally convergent, multiplicative spectral sequences which are concentrated in a single quadrant and therefore convergent. 
\end{proof}

If $R$ is smooth (or more generally ind-smooth) over $\Z$ then both spectral sequences of Proposition \ref{prop:spectralsequencegeneral} agree and take the form
\[
\THH_*(A/\bS_R) \otimes_{R} \Omega^*_{R / \Z}\Rightarrow \THH_*(A) \ .
\]
In general the HKR spectral sequence seems to be slightly more useful even though the other one looks easier (at least easier to state).
We will explain the difference in the example of a quotient of a DVR in Section \ref{sec:othersses} where $R = \Z[z]/z^k$ and 
$\bS_R = \bS[z]/z^k$.  

\begin{remark}
With basically the same construction as in Proposition \ref{prop:spectralsequencegeneral} (and if $R\otimes_\Z A$ is discrete in the first case) one gets variants of these spectral sequences which take the form
\begin{align*}
\pi_i\Big(\THH(A/\bS_R) \otimes_{A} \HH_j(R \otimes_\Z A  / A)\Big) \Rightarrow \pi_{i+j} \THH(A)  \\
\pi_i\Big(\THH(A/\bS_R) \otimes_{A} L\Omega^j_{R \otimes_\Z A / A}\Big) \Rightarrow \pi_{i+j} \THH(A) .
\end{align*} 
These spectral sequences agree with the ones of Proposition \ref{prop:spectralsequencegeneral} as soon as $A$ is flat over $R$ or $R$ is smooth over $\Z$, which covers all cases of interest for us. These modified spectral sequences are probably in general the `correct' ones but we have decided to state Proposition \ref{prop:spectralsequencegeneral} in the more basic form.
\end{remark}

Finally we end this section by construction a slightly different spectral sequence in the situation of a map of rings $A \to A'$. This was constructed in Theorem 3.1 of \cite{MR1750728}.
See also Brun \cite{MR1750729}, which contains the special case $A=\Z_p$. We will explain how it was used by Brun to compute $\THH_*(\Z/ p^n)$ in the next section and compare that approach to ours. 
\begin{prop}\label{ss_basis}
In general for a map of rings $A \to A'$ there is a multiplicative, convergent spectral sequence
\[
\pi_i\Big( \HH(A' /A) \otimes_{A'}  \pi_j\big(\THH(A) \otimes_A A'\big)\Big) \Rightarrow \THH_{i+j}(A').
\] 
\end{prop}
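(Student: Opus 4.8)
The plan is to produce this spectral sequence by exactly the same mechanism as in Proposition~\ref{prop:spectralsequencegeneral}, only filtering a different module spectrum. First I would observe that $\THH(A')$ is naturally a module over $\THH(A)$, hence also over $\THH(A) \otimes_A A'$ (using the commutative $A'$-algebra structure and base change), and moreover it carries an action of $\HH(A'/A)$ coming from the fact that $\THH(A') = \THH(A') \otimes_{\THH(A)} \THH(A)$ and the ring map $A \to A'$ produces $\HH(A'/A) = A' \otimes_{A'\otimes_A A'} A' \simeq \THH(A') \otimes_{\THH(A)} A$. More precisely, I would set $B := \THH(A) \otimes_A A'$ and note there is an equivalence
\[
\THH(A') \simeq \THH(A') \otimes_{\THH(A)} \THH(A) \simeq B \otimes_{A'} \HH(A'/A)
\]
exhibiting $\THH(A')$ as $B \otimes_{A'} \HH(A'/A)$; here one uses $\HH(A'/A) \simeq \THH(A')\otimes_{\THH(A)} A$ together with $\THH(A')\simeq \THH(A)\otimes_{A\otimes_\bS A}(A'\otimes_\bS A')$ and the usual base-change juggling. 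Establishing this identification cleanly is the one genuinely fiddly point; it is the analogue of the equivalence $\HH(R/\Z) \simeq \THH(\bS_R)\otimes_\bS \Z$ used in the previous proof.

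Next I would filter $\HH(A'/A)$ by its Whitehead tower (equivalently, since everything is over the discrete ring $A'$ and $\HH(A'/A)$ is a simplicial-commutative-ring-valued invariant, by the HKR/Postnikov filtration): this is a complete, multiplicative filtration through $\HH(A'/A)$-modules, and on associated graded pieces the module structure factors through $\HH(A'/A) \to A'$, so the graded pieces are the $A'$-modules $\Sigma^j \HH_j(A'/A)$. Then I would apply the lax symmetric monoidal functor $M \mapsto B \otimes_{A'} M$ from $\Mod_{A'}$-filtered objects (more precisely from $\HH(A'/A)$-modules, with $B$ regarded as an $\HH(A'/A)$-module via $\HH(A'/A) \to A'$, or directly via the module structure above) to $\Mod_B$. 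This produces a complete multiplicative filtration of $\THH(A')$ whose associated graded in filtration degree $j$ is $B \otimes_{A'} \Sigma^j \HH_j(A'/A) = \Sigma^j\big(\pi_\bullet(\THH(A)\otimes_A A') \otimes_{A'} \HH_j(A'/A)\big)$ (one can even work with $\HH(A'/A)$ rather than just its homotopy groups if $A'\otimes_A A'$ is not discrete, but stating it with $\HH_j$ matches the conclusion). The spectral sequence of this filtration, which is concentrated in a quadrant and hence convergent, is the claimed one, with $E^2$-term $\pi_i\big(\HH(A'/A)\otimes_{A'}\pi_j(\THH(A)\otimes_A A')\big)$ and $d^r$ of homological Serre bidegree $(-r,r-1)$.

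The main obstacle, as indicated, is purely bookkeeping: verifying the equivalence $\THH(A') \simeq (\THH(A)\otimes_A A') \otimes_{A'} \HH(A'/A)$ as an equivalence of modules over the relevant ring spectra, compatibly with the cyclic/$\mathbb{E}_1$-structures, so that the filtration transported along $B\otimes_{A'}(-)$ really is multiplicative and converges to $\THH(A')$ as a ring. This is entirely parallel to the argument in Proposition~\ref{prop:spectralsequencegeneral} with the base $\bS_R$ replaced by $A$ and the role of $\Z$ played by $A'$; once that identification is in place, completeness follows because the connectivities of the Whitehead-tower graded pieces of $\HH(A'/A)$ tend to infinity (it is connective), and multiplicativity and convergence are formal. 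One could alternatively cite \cite[Theorem~3.1]{MR1750728} directly, but the filtration argument above gives a self-contained proof in the same style as the rest of this section.
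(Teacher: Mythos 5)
Your construction hinges on the claimed equivalence $\THH(A') \simeq (\THH(A)\otimes_A A')\otimes_{A'}\HH(A'/A)$, and this is false in general; with it the whole filtration converges to the wrong object. Already for $A=\Z_p$, $A'=\F_p$ the right-hand side has homotopy $\pi_*(\THH(\Z_p)\otimes_{\Z_p}\F_p)\otimes_{\F_p}\F_p\langle y\rangle$, which contains classes in odd degrees (e.g.\ in degree $2p-1$), while $\THH_*(\F_p)=\F_p[x]$ is concentrated in even degrees. More structurally, if such a splitting existed the spectral sequence would collapse at $E^2$ whenever $A'$ is a field, contradicting the analysis in Section \ref{sec:othersses}, where Brun's spectral sequence for $A'=\Z/p$ is shown to have differentials beyond $E^2$ and multiplicative extensions. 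The algebraic manipulation "$\THH(A')\simeq\THH(A')\otimes_{\THH(A)}\THH(A)\simeq B\otimes_{A'}\HH(A'/A)$" is not a legitimate base-change: the identity $\HH(A'/A)\simeq\THH(A')\otimes_{\THH(A)}A$ exhibits $\HH(A'/A)$ as a quotient (base change) of $\THH(A')$, i.e.\ there is a ring map $\THH(A')\to\HH(A'/A)$ induced by $A'\otimes_{\bS}A'\to A'\otimes_A A'$, but no map in the other direction; so $\THH(A')$ carries no natural $\HH(A'/A)$-module structure, and your proposed functor $M\mapsto B\otimes_{A'}M$ applied to the Whitehead tower of $\HH(A'/A)$ has colimit $B\otimes_{A'}\HH(A'/A)$, not $\THH(A')$.

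The fix is to filter the other tensor factor, which is what the paper does. Set $T:=\THH(A)\otimes_A A'$; then $\THH(A')$ genuinely is a $T$-algebra (via $\THH(A)\to\THH(A')$ and $A'\to\THH(A')$), and one considers the complete multiplicative filtration $\THH(A')\otimes_T\tau_{\geq\bullet}T$ obtained from the Whitehead tower of $T$. Its colimit is tautologically $\THH(A')$, so no identification of the abutment is needed, and the associated graded is computed by base change:
\[
\THH(A')\otimes_T\pi_jT\simeq\bigl(\THH(A')\otimes_T A'\bigr)\otimes_{A'}\pi_jT\simeq\bigl(\THH(A')\otimes_{\THH(A)}A\bigr)\otimes_{A'}\pi_jT\simeq\HH(A'/A)\otimes_{A'}\pi_jT ,
\]
which yields exactly the stated $E^2$-page. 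So your instinct to mimic Proposition \ref{prop:spectralsequencegeneral} is right, but the roles are reversed: the Postnikov filtration must be put on $\THH(A)\otimes_A A'$, with $\HH(A'/A)$ appearing only on the associated graded, not as a ring acting on $\THH(A')$.
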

\begin{proof}
We filter $\THH(A) \otimes_A A' =: T$ by its Whitehead tower $\tau_{\geq \bullet} T $ and consider the associated filtration
\[
\THH(A') \otimes_{T} \tau_{\geq \bullet}T  .
\]
This filtration is multiplicative, complete and the colimit is given by $\THH(A')$. The associated graded is given by 
\begin{align*}
\THH(A') \otimes_{T} \pi_jT &  \simeq \THH(A') \otimes_{T} A' \otimes_{A'} \pi_j T  \\
&  \simeq \left( \THH(A') \otimes_{\THH(A) \otimes_A A'} A' \right)\otimes_{A'} \pi_j T  \\
&  \simeq
\left(\THH(A') \otimes_{ \THH(A)} A\right) \otimes_{A'} \pi_j T  \\
& \simeq \HH(A' / A) \otimes_{A'} \pi_j T \ .
\end{align*}
where we have again used various base change formulas for $\THH$. 
\end{proof}

\section{Comparison of spectral sequences}
\label{sec:othersses}

Let us consider the situation of Section  \ref{sec:thhquot} i.e. $A' = A/\pi^k$ is a quotient of a DVR $A$ with perfect residue field of characteristic $p$. We want to compare four different multiplicative spectral sequences converging to $\THH(A')$ that can be used in such a situation. 
They all have absolutely isomorphic (virtual) $E^0$-pages give by $A[x]\langle y \rangle \otimes \Lambda(dz)$ but totally different grading and differential structure.
\begin{enumerate}
\item
In Section 5  we have constructed a spectral sequence which ultimately identifies $\THH_*(A')$ as the homology of a DGA $(A'[x]\langle y\rangle \otimes \Lambda(d\pi),\partial)$, see Theorem \ref{thm_quot}.  This spectral sequence takes the form
\[
\begin{tikzpicture}[yscale=0.7, xscale=1.3]
\node(00) at (0,0) {$1$};
\node(20) at (2,0) {$x,y$};
\node(21) at (2,1) {$xdz, ydz$};
\node(40) at (4,0) {$x^2, xy, y^{[2]}$};
\node at (3,0) {$0$};
\node at (0,2) {$0$};
\node at (1,2) {$0$};
\node at (1,0) {$0$};
\node at (1,1) {$0$};
\node at (2,2) {$\ldots$};
\node at (5,0) {$\ldots$};
\node at (3,1) {$\ldots$};
\node(01) at (0,1) {$dz$};
\draw[->] (20) edge (01);
\draw[->] (40) edge (21);
\begin{scope}[shift={(-0.4,-0.5)}]
	\draw[-latex] (0,0) -- (6.2,0);
	\draw[-latex] (0,0) -- (0,3.2);
\end{scope}
\end{tikzpicture}
\]
i.e. we have both $x$ and $y$ along the lower edge, and they both support differentials hitting certain multiples of $dz$ (here $dz$ corresponds to $d\pi$). The main point is that it suffices to determine the differential on $x$ and $y$ and the rest follows using multiplicative and divided power structures. There is no space for higher differentials.
\item
We now consider Brun's spectral sequence, see Proposition \ref{ss_basis}. It also computes $\THH_*(A')$ but has $E^2$-term
\[
E^2 = \HH_*(A'/A) \otimes_{A'} \pi_*( \THH_*(A) \otimes_A A') 
\]
Since $\HH_*(A'/A)$ is a divided power algebra $A'\langle y\rangle$, and $\pi_*( \THH_*(A) \otimes_A A')$  can be computed as the homology of the DGA $(A'[x]\otimes \Lambda(dz),\partial)$ by Proposition \ref{prop:thhdga}, one can introduce a virtual zeroth page
of the form 
\[
E^0 = A'[x]\langle y \rangle \otimes \Lambda(dz) \qquad |y| = (2,0), |x| = (0,2), |dz| = (0,1)
\]
We interpret $\partial$ as the $d^0$-differential\footnote{We do not claim that there is a direct algebraic construction of a spectral sequence with this zeroth page. We simply define the spectral sequence by defining $E^0$ and $d^0$ as explained and from $E^2$ and higher on we take Brun's spectral sequence. This should be seen as a mere tool of visualization.} and get the following picture:

\[
\begin{tikzpicture}[yscale=0.7, xscale=1]
\node(00) at (0,0) {$1$};
\node(20) at (2,0) {$y$};
\node(02) at (0,2) {$x$};
\node at (0,5) {$\vdots$};
\node(03) at (0,3) {$xdz$};
\node(04) at (0,4) {$x^2$};
\node(01) at (0,1) {$dz$};
\node at (3,0) {$0$};
\node(40) at (4,0) {$y^{[2]}$};
\node(21) at (2,1) {$ydz$};
\node at (3,1) {$\ldots$};
\node at (2,2) {$\vdots$};
\node at (5,0) {$\ldots$};
\node at (1,3) {$0$};
\node at (1,4) {$\vdots$};

\draw[->] (20) edge (01);
\draw[->] (40) edge (21);
\draw[->] (02) edge (01);
\draw[->] (04) edge (03);
\node at (1,0) {$0$};
\node at (1,1) {$0$};
\node at (1,2) {$0$};

\begin{scope}[shift={(-0.4,-0.5)}]
	\draw[-latex] (0,0) -- (6.3,0);
	\draw[-latex] (0,0) -- (0,6.2);
\end{scope}
\end{tikzpicture}
\]
This spectral sequence behaves well and degenerates in the `big $k$' case discussed in \ref{prop:kbig}, since then we have divided power elements $(y')^{[i]}\in \THH_*(A')$ that are detected by the $y^{[i]}$, but this is not obvious from this spectral sequence, and Brun \cite{MR1750729} has to do serious work to determine its structure in the case $A'=\Z/p^k$ for $k\geq 2$.

In fact, for $A'=\Z/p^k$ with $k=1$ the spectral sequence becomes highly nontrivial. After $d^0$, determined by $d^0(x) = dz$, the leftmost column consists of elements of the form $x^{ip}$ and $x^{ip-1} dz$. From Example \ref{ex:integersksmall}, we know that $\THH_*(\F_p)$ is polynomial on $x' = x-y$. This is detected as $y$ in this spectral sequence. Since $y$ is a divided power generator, its $p$-th power is $0$ on the $E_\infty$-page. But $p^{k-1}(x-y)^p = p^{k-1}x^p$, and thus there is a multiplicative extension. In addition, the elements $x^{kp-1} dz$ and the divided powers of $y$ cannot exist on the $E_\infty$-page, so there are also longer differentials.

While these phenomena might seem like a pathology in the case $A=\Z_p$ -- after all, we knew $\THH(\Z/p)$ before -- qualitatively, they generally appear whenever we are not in the `big $k$' case discussed in Proposition \ref{prop:ksmall}.
\item
We can also consider the first spectral constructed in Proposition \ref{prop:spectralsequencegeneral}, which takes the form
\[
E^2 = \THH_*(A' /(\bS[z]/z^k)) \otimes_{A'} \HH_i((A'[z]/z^k)/A') \Rightarrow \THH_*(A'). 
\]
One gets $\THH_*(A' / (\bS[z]/z^k) ) = A'[x]$ by a version of Theorem \ref{thm:boekstedtDVR}, and $\HH((A'[z]/z^k)/A')$ is computed as the homology of the DGA $(A'\langle y\rangle \otimes \Lambda(dz),\partial)$ where $y$ sits in degree $2$ and $dz$ in degree 1. Thus we again introduce a virtual $E^0$-term 
\[
E^0  = A'[x]\langle y \rangle \otimes \Lambda(dz) \qquad |y| = (0,2), |x| = (2,0), |dz| = (0,1)
\]
and consider $\partial$ as a $d^0$ differential. Then 
 the spectral sequence visually looks as follows:

\[
\begin{tikzpicture}[yscale=0.7, xscale=1]
\node(00) at (0,0) {$1$};
\node(20) at (2,0) {$x$};
\node(02) at (0,2) {$y$};
\node at (0,3) {$\vdots$};
\node(01) at (0,1) {$dz$};
\node at (3,0) {$0$};
\node(40) at (4,0) {$x^{2}$};
\node(21) at (2,1) {$xdz$};
\node at (3,1) {$\ldots$};
\node at (2,2) {$\ldots$};
\node at (5,0) {$\ldots$};

\draw[->] (20) edge (01);
\draw[->] (02) edge (01);
\node at (1,0) {$0$};
\node at (1,1) {$0$};
\node at (1,2) {$0$};

\begin{scope}[shift={(-0.4,-0.5)}]
	\draw[-latex] (0,0) -- (6.3,0);
	\draw[-latex] (0,0) -- (0,4.2);
\end{scope}
\end{tikzpicture}
\]
This spectral sequence behaves well and degenerates in the `small $k$' case discussed in Proposition \ref{prop:ksmall}, since then we have a polynomial generator $x'\in \THH_*(A')$ whose powers are detected by the $x^i$. If we are not in this case, we generally have nontrivial extensions. For example, let $A'$ be chosen such that $p\nmid k$, and $\pi\phi'(\pi) | \pi^{k-1}$. In this case, $\THH_2(A')$, using \ref{thm_quot}, is of the form
\[
A'\{y'\} \oplus (A/\phi'(\pi))\left\{\frac{\pi^k}{\phi'(\pi)} x\right\},
\] 
with 
\[
y' = y - \frac{k\pi^{k-1}}{\phi'(\pi)} x.
\]
In this spectral sequence, the $E^\infty$ page consists in total degree $2$ of a copy of $(A/\pi^{k-1})\{\pi y\}$ in degree $(0,2)$, and a copy of $(A/\pi\phi'(\pi))\left\{\frac{\pi^{k-1}}{\phi'(\pi)}x\right\}$ in degree $(2,0)$. The element $y'\in \THH_2(A')$ is detected as a generator of the degree $(2,0)$ part, but it is not actually annihilated by $\pi\phi'(\pi)$. Rather, $\pi\phi'(\pi)y'$ agrees with $\pi\phi'(\pi)y$, detected as a $\phi'(\pi)$-multiple of the generator in degree $(0,2)$ and nonzero under our assumption $\pi\phi'(\pi) | k\pi^{k-1}$.

\item
Finally we can consider the second spectral sequence constructed in Proposition \ref{prop:spectralsequencegeneral} which takes the form
\[
E^2 = \THH(A' / ( \bS[z]/z^k) ) \otimes_{A'} L\Omega_{A'/A} \Rightarrow \THH(A'). 
\]
One again has $\THH_*(A' / (\bS[z]/z^k) ) = A'[x]$ and $L\Omega_{A'/A}$ is computed as the homology of the DGA $(A'\langle y\rangle \otimes \Lambda(dz),\partial)$ where this time $y$ sits in grading $1$ and homological degree $1$ (recall that $L\Omega$ has a grading and a homological degree). Thus our virtual $E^0$-term this time takes the form
\[
E^0  = A'[x]\langle y \rangle \otimes \Lambda(dz) \qquad |y| = (1,1), |x| = (2,0), |dz| = (0,1) \ .
\]
and the differential $\partial$ becomes a $d^1$.
The spectral sequence looks graphically as follows:

\[
\begin{tikzpicture}[yscale=0.7, xscale=1]
\node(00) at (0,0) {$1$};
\node(20) at (2,0) {$x$};
\node(11) at (1,1) {$y$};
\node at (0,3) {$0$};
\node at (1,3) {$0$};
\node at (2,4) {$\iddots$};
\node at (3,3) {$\iddots$};
\node at (0,4) {$\vdots$};
\node(01) at (0,1) {$dz$};
\node at (3,0) {$0$};
\node(40) at (4,0) {$x^{2}$};
\node(21) at (2,1) {$xdz$};
\node at (3,1) {$\ldots$};
\node(22) at (2,2) {$y^{[2]}$};
\node at (5,0) {$\ldots$};
\node(12) at (1,2) {$ydz$};
\node at (2,3) {$y^{[2]}dz$};

\draw[->] (11) edge (01);
\draw[->] (22) edge (12);
\draw[->] (20) edge (01);
\draw[->] (40) edge (21);
\node at (1,0) {$0$};
\node at (0,2) {$0$};

\begin{scope}[shift={(-0.4,-0.5)}]
	\draw[-latex] (0,0) -- (6.3,0);
	\draw[-latex] (0,0) -- (0,5.2);
\end{scope}
\end{tikzpicture}
\]
This spectral sequence is a slightly improved version of spectral sequence (3) as there are way less higher differentials possible. The whole wedge above the diagonal line through $1$ on the $j$-axis is zero. Again this spectral sequence behaves well and degenerates in the `small $k$' case \ref{prop:ksmall}, but behaves as badly in the other cases.

\end{enumerate}
Essentially, one should view Proposition \ref{prop:ksmall} as degeneration result for the spectral sequences (3) and (4), and Proposition \ref{prop:kbig} as a degeneration result for the Brun spectral sequence (2). By putting both the Bökstedt element $x$ and the divided power element $y$ (coming from the relation $\pi^k=0$) in the same filtration, the spectral sequence (1) that we have used  allows us to uniformly treat both of these cases, as well as still behaving well in the cases not covered by Propositions \ref{prop:kbig} and \ref{prop:ksmall} (like Example \ref{ex:inbetween}), where the homology of the DGA of Theorem \ref{thm_quot} becomes more complicated and all of the three alternative spectral sequences discussed here can have nontrivial extension problems, seen in our spectral sequence in the form of cycles which are interesting linear combinations of powers of $x$ and $y$.

\section{B\"okstedt periodicity for complete regular local rings}\label{sec_complete}

In this section we want to discuss the more general case of a complete regular  local ring $A$, that is, a complete local ring $A$ whose maximal ideal $\mathfrak{m}$ is generated by a regular sequence $(a_1,\ldots, a_n)$, see \cite[\href{https://stacks.math.columbia.edu/tag/00NQ}{Tag 00NQ}]{stacks-project} and \cite[\href{https://stacks.math.columbia.edu/tag/00NU}{Tag 00NU}]{stacks-project}.
Assume furthermore that $A/\mathfrak{m}=k$ is perfect of characteristic $p$. We focus on the mixed characteristic case, since by a result of Cohen \cite{MR16094}, $A$ agrees with a power series ring over $k$ in the equal characteristic case.

We can regard $A$ as an algebra over $\bS[z_1,\ldots,z_n] = \bS[\mathbb{N}\times \ldots\times \mathbb{N}]$. We then have the following generalisation of Theorem \ref{thm:boekstedtDVR}:
\begin{thm} \label{thm_regularlocal} 
For a complete regular  local ring $A$ of mixed characteristic with perfect residue field of characteristic $p$ we have
\[
\THH_*(A/\bS[z_1,\ldots,z_n];\Z_p) = A[x]
\]
with $x$ in degree $2$.
\end{thm}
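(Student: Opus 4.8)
The plan is to mirror the mixed characteristic proof of Theorem~\ref{thm:boekstedtDVR}, but to peel off the regular parameters one at a time, arguing by induction on the Krull dimension $n$ of $A$. The case $n=1$ is exactly Theorem~\ref{thm:boekstedtDVR}. For the inductive step we pick a regular system of parameters $a_1,\dots,a_n$ of $A$ (so that $z_i\mapsto a_i$) with $(a_n)\neq(p)$; this is possible since $n\geq 2$, and it guarantees that $\bar A:=A/a_n$ is again a complete regular local ring of mixed characteristic with perfect residue field $k$, now of dimension $n-1$, so that the inductive hypothesis applies to it.

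Write $N:=\THH(A/\bS[z_1,\dots,z_n];\Z_p)$, a connective $A$-algebra with $\pi_0 N=A$, and $T:=\bS[z_1,\dots,z_{n-1}]$, so that $\bS[z_1,\dots,z_n]=T[z_n]$ and $\bar A=A\otimes_{\bS[z_n]}\bS=A\otimes_{T[z_n]}T$. The standard base change formulas for $\THH$ give $\THH(A/T[z_n])\otimes_{\bS[z_n]}\bS\simeq\THH(\bar A/T)$, and since $p$-completion commutes with the finite colimit $(-)\otimes_{\bS[z_n]}\bS$ and $p$ is nilpotent in $\bar A$, this becomes
\[
N\otimes_A^{\mathbb{L}}\bar A\;\simeq\;\THH(\bar A/\bS[z_1,\dots,z_{n-1}];\Z_p),
\]
whose homotopy is the polynomial ring $\bar A[x]$ by the inductive hypothesis. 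In particular $N\otimes_A^{\mathbb{L}}\bar A=N/a_n$ is concentrated in even degrees, with $\pi_{2i}(N/a_n)\cong A/a_n$.

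The crucial extra input --- which I expect to be the main obstacle --- is that $N$ is derived $\mathfrak m$-complete as an $A$-module, hence in particular derived $(a_n)$-complete; this does \emph{not} follow from $p$-completeness alone once $n>1$. As in the proof of Proposition~\ref{prop:completenessproperties} one passes to spherical Witt vectors: computing the relevant relative $\THH$ of bases on $\F_p$-homology --- using, just as in Theorem~\ref{thm:boekstedtDVR}, that $\F_p[z_1,\dots,z_n]\to k\pow{z_1,\dots,z_n}$ is relatively perfect --- identifies $N$ with $\THH(A/\bS_{W(k)}\pow{z_1,\dots,z_n})$. By Cohen's structure theorem $A$ is a quotient of $\bS_{W(k)}\pow{z_1,\dots,z_n}$, hence of finite type over it; Lemma~\ref{lem_complete} and the (evident $\mathfrak m$-adic variant of its) proof then show that $\THH(A/\bS_{W(k)}\pow{z_1,\dots,z_n})$ is both $p$-complete and $\mathfrak m$-complete, since $A$ is $\mathfrak m$-complete.

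Granting completeness, the conclusion is formal. From the cofiber sequence $N\xrightarrow{a_n}N\to N/a_n$ and the evenness of $N/a_n$, the long exact sequence shows that $a_n$ acts surjectively on $\pi_{2i+1}N$ for all $i$; since $N$ is connective and derived $(a_n)$-complete, each $\pi_{2i+1}N$ is a derived $(a_n)$-complete $A$-module, so derived Nakayama forces $\pi_{2i+1}N=0$. The long exact sequence then collapses to short exact sequences $0\to\pi_{2i}N\xrightarrow{a_n}\pi_{2i}N\to\pi_{2i}(N/a_n)\to 0$ with $\pi_{2i}(N/a_n)\cong A/a_n$, so each $\pi_{2i}N$ is $a_n$-torsion free and $(a_n)$-adically complete with quotient $A/a_n$; lifting a generator and comparing $a_n$-adic filtrations identifies $\pi_{2i}N\cong A$. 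This gives $\THH_*(A/\bS[z_1,\dots,z_n];\Z_p)\cong A[x]$ additively with $|x|=2$, and the product maps $\pi_2 N\otimes_A\pi_{2i}N\to\pi_{2i+2}N$ are isomorphisms because they are isomorphisms modulo $a_n$, where they recover the polynomial ring $\bar A[x]$, while all modules in sight are free of rank one over the local ring $A$. Hence $\THH_*(A/\bS[z_1,\dots,z_n];\Z_p)=A[x]$ as a ring.
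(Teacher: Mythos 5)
Your proposal is correct in substance, and its decisive technical input is the same as the paper's: you identify $\THH(A/\bS[z_1,\ldots,z_n];\Z_p)$ with $\THH(A/\bS_{W(k)}\pow{z_1,\ldots,z_n})$ by mod-$p$ checks resting on relative perfectness of $\F_p[z_1,\ldots,z_n]\to k\pow{z_1,\ldots,z_n}$, and you use the presentation of $A$ as a quotient of $W(k)\pow{z_1,\ldots,z_n}$ (Lemma \ref{lem:completeregularpresentation}) to get finite type over that base and hence the $\mathfrak m$-completeness that $p$-completeness alone no longer supplies for $n\geq 2$ --- exactly the point the paper's proof is organized around. Where you differ is the endgame, which the paper leaves implicit as ``completely analogous to Theorem \ref{thm:boekstedtDVR}'': there one reduces modulo all of $\mathfrak m$ at once, base changing along $\bS[z_1,\ldots,z_n]\to\bS$ to the even ring $\THH(k;\Z_p)$ and running the evenness-plus-completeness argument for the whole regular sequence, whereas you peel off one parameter at a time by induction on $n$, reducing to the CDVR case. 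Your organization trades the multi-element Nakayama bookkeeping for the requirement that $\bar A=A/a_n$ stay within the scope of the inductive hypothesis, and it has the mild advantage that the base change $N/a_n\simeq\THH(\bar A/\bS[z_1,\ldots,z_{n-1}];\Z_p)$ is already known to be even polynomial by induction.

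Two points need repair, neither fatal. First, the condition $(a_n)\neq(p)$ does not guarantee that $\bar A$ is of mixed characteristic: in $A=\Z_p[\sqrt{p}]\pow{t}$ one may take $a_n=\sqrt{p}$, and then $(a_n)\neq(p)$ but $p\in(a_n)$, so $\bar A\cong k\pow{t}$ is of equal characteristic and your inductive hypothesis does not literally apply. What you need is $p\notin(a_n)$, i.e. $a_n\nmid p$; such an $a_n\in\mathfrak m\setminus\mathfrak m^2$ exists whenever $n\geq 2$ by prime avoidance (the elements of $\mathfrak m\setminus\mathfrak m^2$ dividing $p$ lie in the finitely many height-one primes containing $p$, and $\mathfrak m$ is not contained in their union together with $\mathfrak m^2$); alternatively, include the equal-characteristic case $\bar A\cong k\pow{z_1,\ldots,z_{n-1}}$ in the induction, where the relative-perfectness argument of Theorem \ref{thm:boekstedtDVR} gives the polynomial answer directly. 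Second, ``$p$ is nilpotent in $\bar A$'' is false (it is a nonzerodivisor there), but also unnecessary: commuting $p$-completion with the finite colimit $(-)\otimes_{\bS[z_n]}\bS$ already yields $N\otimes_A^{\mathbb{L}}\bar A\simeq\THH(\bar A/\bS[z_1,\ldots,z_{n-1}];\Z_p)$. Relatedly, ``quotient of $\bS_{W(k)}\pow{z_1,\ldots,z_n}$, hence of finite type'' deserves the one-line justification the paper gives: the kernel of $W(k)\pow{z_1,\ldots,z_n}\to A$ is principal on a nonzerodivisor, so $A$ is even perfect over the base, which is what feeds into Lemma \ref{lem_complete} and its $\mathfrak m$-adic variant.
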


We will give a  proof which is completely analogous to the one of Theorem \ref{thm:boekstedtDVR}. We first need the following Lemma. 

\begin{lem}
\label{lem:completeregularpresentation}
If $A$ is a complete regular local ring as above, it is of finite type over $W(k)\pow{z_1,\ldots,z_n}$. More precisely, it takes the form
\[
A=W(k)\pow{z_1,\ldots,z_n}/\phi(z_1,\ldots,z_n)
\]
for a power series $\phi$ with $\phi(0,\ldots,0)=p$.
\end{lem}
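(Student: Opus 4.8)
The plan is to reduce the statement to the Cohen structure theorem together with two standard facts of commutative algebra: a regular local ring is a unique factorization domain, and a height-one prime in a unique factorization domain is principal.

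First I would produce the ambient power series ring. Since $A$ is a complete Noetherian local ring of mixed characteristic with perfect residue field $k$ of characteristic $p$, Cohen's structure theorem \cite{MR16094} furnishes a coefficient ring, which in the perfect case is the ring of Witt vectors $W(k)$: there is a local homomorphism $W(k)\to A$ inducing the identity on residue fields. As $\mathfrak{m}=(a_1,\ldots,a_n)$ is generated by $n$ elements and $p\in\mathfrak{m}$, the induced local homomorphism
\[
\psi\colon R:=W(k)\pow{z_1,\ldots,z_n}\longrightarrow A,\qquad z_i\mapsto a_i,
\]
sends the maximal ideal $\mathfrak{m}_R=(p,z_1,\ldots,z_n)$ onto a generating set of $\mathfrak{m}$, so $A/\mathfrak{m}_R A=A/\mathfrak{m}=k$. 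Since $R$ is $\mathfrak{m}_R$-adically complete, the topological Nakayama lemma shows $A$ is module-finite over $R$, necessarily generated by the image of $1$; hence $\psi$ is surjective, and we write $A=R/I$.

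Next I would identify the ideal $I$. The ring $R=W(k)\pow{z_1,\ldots,z_n}$ is a regular local ring of dimension $n+1$ (a power series ring in $n$ variables over the discrete valuation ring $W(k)$), hence Noetherian, Cohen--Macaulay, a domain, and, by the Auslander--Buchsbaum theorem, a unique factorization domain. Because $A$ is regular local it is a domain, so $I$ is prime; and $\dim A=n$, so the dimension formula $\operatorname{ht} I+\dim(R/I)=\dim R$ (valid in the Cohen--Macaulay ring $R$) gives $\operatorname{ht} I=1$. A height-one prime in a unique factorization domain is principal, so $I=(\phi)$ for some $\phi\in R$. As $R$ is a domain and $\phi\neq 0$, $\phi$ is a nonzerodivisor, so $[R\xrightarrow{\;\phi\;}R]$ is a finite free resolution of $A=R/\phi$; thus $A$ is even a perfect $R$-module, and in particular of finite type over $R$.

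Finally I would normalize $\phi$. Reducing modulo $(z_1,\ldots,z_n)$ identifies $R/(z_1,\ldots,z_n)$ with $W(k)$ and $\phi$ with $\phi(0,\ldots,0)$; on the other hand $R/(\phi,z_1,\ldots,z_n)=A/\mathfrak{m}=k$, so $W(k)/\big(\phi(0,\ldots,0)\big)=k$. Since $W(k)$ is a discrete valuation ring with uniformizer $p$, this forces $\phi(0,\ldots,0)$ to generate the maximal ideal, i.e.\ $\phi(0,\ldots,0)=up$ for some unit $u\in W(k)^\times$; replacing $\phi$ by $u^{-1}\phi$ (still a generator of $I$) gives the asserted presentation with $\phi(0,\ldots,0)=p$. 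I expect the only genuinely delicate point to be the first step -- obtaining a surjection from the power series ring in exactly the $n$ chosen generators rather than in a minimal generating set -- which is precisely where completeness of $R$ enters, via topological Nakayama; everything afterwards is routine commutative algebra.
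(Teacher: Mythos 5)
Your proof is correct, and up to the first step it follows the paper: both arguments produce the surjection $W(k)\pow{z_1,\ldots,z_n}\to A$, $z_i\mapsto a_i$, from a coefficient map $W(k)\to A$ together with completeness (one small precision: the topological Nakayama step also needs $A$ to be $\mathfrak{m}_R$-adically separated/complete as a module, which holds because $\mathfrak{m}_R A=\mathfrak{m}$ and $A$ is a complete local ring). Where you genuinely diverge is in proving that the kernel is principal. The paper tensors $0\to K\to R\to A\to 0$ along $R\to W(k)=R/(z_1,\ldots,z_n)$; since $(a_1,\ldots,a_n)$ is a regular sequence in $A$, the relevant $\Tor_1$ vanishes (Koszul complex), so $K\otimes_R W(k)\cong pW(k)$ is free of rank one, and Nakayama together with $R$ being a domain gives $K=(\phi)$ with $\phi$ reducing to $p$ (up to a unit, which one absorbs) modulo $(z_1,\ldots,z_n)$. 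You instead observe that $A$ is a domain of dimension $n$, so $K$ is a height-one prime of the $(n{+}1)$-dimensional regular local ring $R$, hence principal by Auslander--Buchsbaum, and then normalize $\phi(0,\ldots,0)$ by the same reduction modulo $(z_1,\ldots,z_n)$. So the regularity hypothesis enters differently in the two arguments: through the regular sequence in $A$ (Tor-vanishing) in the paper, through ``regular $\Rightarrow$ domain'' and $\dim A=n$ for you. Your route invokes heavier general theory (UFDs, the dimension formula in Cohen--Macaulay rings) where the paper needs only Nakayama and a base change, but it also hands you the two-term free resolution $R\xrightarrow{\phi}R$ of $A$, i.e.\ that $A$ is perfect over $W(k)\pow{z_1,\ldots,z_n}$, which is precisely the finiteness that gets used when the lemma is applied afterwards.
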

\begin{proof}
The map $W(k)\pow{z_1,\ldots,z_n}\to A$ is a surjective $W(k)\pow{z_1,\ldots,z_n}$-module map, and its kernel $K$ base-changes along $W(k)\pow{z_1,\ldots,z_n}\to W(k)$ to the kernel of $W(k)\to k$, i.e. $pW(k)$. $K$ is therefore free of rank $1$, on a generator $\phi\in W(k)\pow{z_1,\ldots,z_n}$ reducing to $p$ modulo $(z_1,\ldots,z_n)$.
\end{proof}

\begin{proof}[Proof of Theorem \ref{thm_regularlocal}]
From Lemma \ref{lem:completeregularpresentation}, one can deduce as in Proposition \ref{prop:completenessproperties} that the following all agree:
\[
\begin{tikzcd}
\THH(A/\bS[z_1,\ldots,z_n];\Z_p)\rar{\simeq}\dar{\simeq} & \THH(A/\bS\pow{z_1,\ldots,z_n};\Z_p)\dar{\simeq}\\
\THH(A/\bS_{W(k)}[z_1,\ldots,z_n];\Z_p)\rar{\simeq} & \THH(A/\bS_{W(k)}\pow{z_1,\ldots,z_n};\Z_p)\\
    & \THH(A/\bS_{W(k)}\pow{z_1,\ldots,z_n})\uar{\simeq}
\end{tikzcd}
\]
These statements can again all be checked modulo $p$, observing that the lower right hand term $\THH(A/\bS_{W(k)}\pow{z_1,\ldots,z_n})$ is already $p$-complete by Lemma \ref{lem_complete}  since $A$ is of finite type over $\bS_{W(k)}\pow{z_1,\ldots,z_n}$. The key is (as in the proof of Proposition \ref{prop:completenessproperties}) that the maps
\[
\xymatrix{
\F_p[z_1,\ldots, z_n] \ar[r]\ar[d] &  \F_p\pow{z_1,\ldots, z_n}\ar[d] \\
k[z_1,\ldots, z_n] \ar[r] &  k\pow{z_1,\ldots, z_n}
}
\]
are all relatively perfect.

Note that, as opposed to the DVR case, $A$ is not of finite type over the ring spectrum $\bS_{W(k)}[z_1,\ldots,z_n]$, and thus $\THH(A/\bS_{W(k)}[z_1,\ldots,z_n])$ is not necessarily $p$-complete.
\end{proof}

From Proposition \ref{prop:spectralsequencegeneral}, we now obtain:
\begin{prop}
There is a multiplicative, convergent spectral sequence
\[
\THH_*(A/\bS[z_1,\ldots,z_n];\Z_p)\otimes_\Z \Omega^*_{\Z[z_1,\ldots,z_n]/\Z} \Rightarrow \THH_*(A;\Z_p).
\]
\end{prop}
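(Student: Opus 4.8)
The plan is to apply Proposition \ref{prop:spectralsequencegeneral} directly with the choices $R = \Z[z_1,\ldots,z_n]$ and $\bS_R = \bS[z_1,\ldots,z_n]$, together with the ring map $R \to A$ induced by $z_i \mapsto a_i$. First I would check that this is a valid instance of the proposition: one needs $\bS[z_1,\ldots,z_n]\otimes_\bS \Z \simeq \Z[z_1,\ldots,z_n]$, which holds since $\bS[z_1,\ldots,z_n] = \Sigma^\infty_+(\mathbb{N}^n)$ and $\Z\otimes_\bS\Sigma^\infty_+(\mathbb{N}^n) = \Z[\mathbb{N}^n] = \Z[z_1,\ldots,z_n]$ as commutative rings. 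So the hypotheses of Proposition \ref{prop:spectralsequencegeneral} are satisfied.

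Next I would invoke the second ($p$-completed) spectral sequence of Proposition \ref{prop:spectralsequencegeneral}, which reads
\[
\pi_i\Big(\THH(A/\bS[z_1,\ldots,z_n];\Z_p)\otimes_{\Z[z_1,\ldots,z_n]} L\Omega^j_{\Z[z_1,\ldots,z_n]/\Z}\Big)\Rightarrow \THH_{i+j}(A;\Z_p).
\]
The point is that $\Z[z_1,\ldots,z_n]$ is a polynomial ring, hence smooth over $\Z$, so its cotangent complex is concentrated in degree $0$ and the derived de Rham terms $L\Omega^j_{\Z[z_1,\ldots,z_n]/\Z}$ coincide with the honest Kähler differentials $\Omega^j_{\Z[z_1,\ldots,z_n]/\Z}$, which are finite free $\Z[z_1,\ldots,z_n]$-modules. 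In particular the tensor product is underived and the $E_2$-page simplifies to $\THH_*(A/\bS[z_1,\ldots,z_n];\Z_p)\otimes_{\Z[z_1,\ldots,z_n]}\Omega^*_{\Z[z_1,\ldots,z_n]/\Z}$, giving exactly the claimed form. (This is the same simplification already noted in the excerpt right after Proposition \ref{prop:spectralsequencegeneral}; alternatively one could use the first spectral sequence, observing that for $R$ smooth over $\Z$ one has $\HH_*(R/\Z) = \Omega^*_{R/\Z}$ by classical HKR, and the two spectral sequences agree.) Convergence and multiplicativity are inherited directly from Proposition \ref{prop:spectralsequencegeneral}.

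There is essentially no obstacle here: the statement is a formal specialization, and the only thing to be careful about is that we are using the $p$-completed variant of the spectral sequence throughout (so that the $E_2$-page involves $\THH_*(A/\bS[z_1,\ldots,z_n];\Z_p)$, which is the object computed in Theorem \ref{thm_regularlocal}, rather than the non-completed relative $\THH$ whose rational part is uncontrolled). One should also note that the tensor products appearing need no $p$-completion because $\Omega^*_{\Z[z_1,\ldots,z_n]/\Z}$ is finite free, so $p$-completing commutes with the tensor product; this is why the statement can be written with an ordinary tensor product over $\Z$. No further input beyond Proposition \ref{prop:spectralsequencegeneral} and smoothness of the polynomial ring is required.
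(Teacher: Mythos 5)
Your argument is correct and is exactly the paper's route: the proposition is stated there as an immediate specialization of Proposition \ref{prop:spectralsequencegeneral} (in its $p$-completed form) to $R=\Z[z_1,\ldots,z_n]$, $\bS_R=\bS[z_1,\ldots,z_n]$, with smoothness of the polynomial ring collapsing the derived de Rham terms to ordinary K\"ahler differentials. Your extra checks (the lift condition, $p$-completeness of the $E_2$-terms via finite freeness of $\Omega^*$) are fine, and your $E_2$-page written with $\otimes_{\Z[z_1,\ldots,z_n]}$ is the intended reading of the displayed formula.
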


Analogously to Lemma \ref{lem:boekstedtelem} we can describe the differential $d^2$:

\begin{lem}
We can choose the generator $x\in \THH_2(A/\bS[z_1,\ldots,z_n];\Z_p)$ in such a way that
\[
 d^2 x =\sum_i \frac{\partial \phi}{\partial z_i} dz_i.
\]
\end{lem}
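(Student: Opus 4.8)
The plan is to mimic the proof of Lemma \ref{lem:boekstedtelem} almost verbatim, using the presentation $A = W(k)\pow{z_1,\ldots,z_n}/\phi$ from Lemma \ref{lem:completeregularpresentation} to identify the target of $d^2$ with a module of Kähler differentials. First I would note that, exactly as in Lemma \ref{lem:boekstedtelem}, $\THH(A;\Z_p)$ agrees with $\THH(A/\bS_{W(k)};\Z_p)$ since $\THH(\bS_{W(k)};\Z_p) = \bS_{W(k)}$, and that this in turn agrees with $\THH(A/\bS_{W(k)})$ because $A$ is of finite type over $\bS_{W(k)}$ (Lemma \ref{lem_complete}). Hence for connectivity reasons
\[
\THH_1(A/\bS_{W(k)}) = \HH_1(A/W(k)) = \Omega^1_{A/W(k)}.
\]
Using $A = W(k)\pow{z_1,\ldots,z_n}/\phi$, the module $\Omega^1_{A/W(k)}$ is the quotient of the free $A$-module on $dz_1,\ldots,dz_n$ by the single relation $\sum_i \frac{\partial\phi}{\partial z_i}\, dz_i$.

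Next I would analyze the spectral sequence $E^2 = A[x]\otimes\Lambda(dz_1,\ldots,dz_n) \Rightarrow \THH_*(A;\Z_p)$ in total degree $1$. The only contribution in total degree $1$ is $E^2_{0,1} = A\{dz_1,\ldots,dz_n\}$, which survives to $E^\infty$ except for whatever is killed by the differential $d^2\colon E^2_{2,0}\to E^2_{0,1}$ out of $A\{x\}$. Comparing with the computation of $\THH_1 = \Omega^1_{A/W(k)}$ above, the image of $d^2$ must be exactly the cyclic submodule generated by $\sum_i \frac{\partial\phi}{\partial z_i}\,dz_i$. Since $A$ is a domain (being a regular local ring, hence integral), any generator of this principal ideal differs from $\sum_i \frac{\partial\phi}{\partial z_i}\, dz_i$ by a unit, so replacing the generator $x$ of $\THH_2(A/\bS[z_1,\ldots,z_n];\Z_p)$ by a suitable unit multiple gives $d^2 x = \sum_i \frac{\partial\phi}{\partial z_i}\, dz_i$ on the nose.

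The only point requiring a little care — and the mild analogue of the one subtlety in Lemma \ref{lem:boekstedtelem} — is the claim that the image of $d^2$ coincides with the relation module, rather than merely surjecting onto $\Omega^1_{A/W(k)}$ after passing to $E^\infty$. This is immediate here because in total degree $1$ the $E^2$-page is concentrated on the single line $j=1$, so there are no higher differentials into or out of $E_{0,1}$ and $E^3_{0,1} = E^\infty_{0,1}$; and $E^2_{2,0} = A\{x\}$ likewise supports no differential other than $d^2$ (the only nonzero targets in its antidiagonal are on the line $j=1$). Thus $\coker(d^2\colon A\{x\}\to A\{dz_1,\ldots,dz_n\}) = \THH_1(A;\Z_p) = \Omega^1_{A/W(k)}$, and since the source is cyclic the image is the asserted principal submodule. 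I do not expect any genuine obstacle beyond writing this out; the proof is structurally identical to that of Lemma \ref{lem:boekstedtelem}, with the single uniformizer replaced by the regular sequence $(z_1,\ldots,z_n)$ and the minimal polynomial by $\phi$.
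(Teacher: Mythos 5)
Your route is the one the paper takes (identify $\THH_1$ with K\"ahler differentials via the presentation, compare with the spectral sequence in total degree $1$, rescale $x$ by a unit using that $A$ is a domain), but one step of your argument is genuinely false, and it is exactly the point where the regular local case differs from Lemma \ref{lem:boekstedtelem}. You invoke Lemma \ref{lem_complete} via the claim that $A$ is of finite type over $\bS_{W(k)}$ in order to drop the $p$-completion. For $n\geq 2$ this fails: $A=W(k)\pow{z_1,\ldots,z_n}/\phi$ is not a finitely generated $W(k)$-module (take $\phi=z_1-p$, so $A\cong W(k)\pow{z_2,\ldots,z_n}$), and the paper itself warns in the proof of Theorem \ref{thm_regularlocal} that $A$ is not even of finite type over $\bS_{W(k)}[z_1,\ldots,z_n]$. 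In the DVR case one has the polynomial presentation $A=W(k)[z]/\phi$, finite over $W(k)$; here only the power-series presentation of Lemma \ref{lem:completeregularpresentation} exists. This is not cosmetic: once you discard the completion, your identification $\Omega^1_{A/W(k)}=A\{dz_1,\ldots,dz_n\}/(\sum_i \tfrac{\partial\phi}{\partial z_i}dz_i)$ is wrong, because that formula presupposes a presentation by a polynomial ring, whereas $\Omega^1_{W(k)\pow{z_1,\ldots,z_n}/W(k)}$ is not generated by the $dz_i$ (rationally it contains differentials of power series transcendental over $W(k)[z_1,\ldots,z_n]$, so it is not even finitely generated). The repair is what the paper does: stay $p$-complete throughout (the statement concerns $\THH_2(A/\bS[z_1,\ldots,z_n];\Z_p)$ and the $p$-completed spectral sequence of Proposition \ref{prop:spectralsequencegeneral}) and identify $\THH_1(A;\Z_p)$ with the (continuous, i.e. suitably completed) differentials computed from the complete presentation; the extra part of the uncompleted $\Omega^1$ dies under $p$-completion.

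A smaller point: with $n\geq 2$, knowing that the cokernel of $d^2\colon A\{x\}\to A\{dz_1,\ldots,dz_n\}$ is abstractly isomorphic to $A\{dz_1,\ldots,dz_n\}/(\sum_i \tfrac{\partial\phi}{\partial z_i}dz_i)$ does not by itself force the image of $d^2$ to be that particular principal submodule; compare the cyclic submodules generated by $(p,0)$ and $(0,p)$ in $\Z_p^2$, which are not unit multiples of each other yet have isomorphic quotients. What one should use (and what the paper uses implicitly, being equally terse) is that the edge surjection $E^2_{0,1}=A\{dz_1,\ldots,dz_n\}\twoheadrightarrow E^\infty_{0,1}=\THH_1(A;\Z_p)$ is, under the identification with the completed differentials, the canonical map $dz_i\mapsto dz_i$, so its kernel is exactly the relation submodule. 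Granting that, your concluding step is fine: two generators of the same cyclic submodule of a free module over the domain $A$ differ by a unit, so $x$ can be rescaled to give $d^2x=\sum_i \tfrac{\partial\phi}{\partial z_i}dz_i$, exactly as in Lemma \ref{lem:boekstedtelem}.
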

\begin{proof}
We have $\THH_1(A;\Z_p) = \Omega^1_{A/W(k)\pow{z_1,\ldots,z_n}}$. For a polynomial $\phi$ as in Lemma \ref{lem:completeregularpresentation}, we get
\[
\Omega^1_{A/W(k)\pow{z_1,\ldots,z_n}} = A\{dz_1,\ldots,dz_n\} \Big/ \left(\sum_i \frac{\partial \phi}{\partial z_i} dz_i\right).
\]
So the image of $d^2$ in degree $(0,1)$ has to agree with the ideal generated by $\sum_i \frac{\partial \phi}{\partial z_i} dz_i$. Up to a unit, we thus have
\[
d^2 x = \sum_i \frac{\partial \phi}{\partial z_i} dz_i.\qedhere
\]
\end{proof}

For $n=2$, this differential again completely determines $\THH_*(A;\Z_p)$, since $d^2$ in degrees $(2k,0)\mapsto (2k-2,1)$ is injective, and therefore the $E^3$-page is concentrated in degrees $(0,0)$, $(2k,1)$ and $(2k,2)$ for $k\geq 0$ and the spectral sequence degenerates thereafter without potential for extensions. For $n\geq 3$, there could be extensions, and for $n\geq 4$, there could be longer differentials, both of which we do not know how to control.

Finally, we want to remark a couple of things about computing $\THH_*(A';\Z_p)$ for $A' = A/(f_1,\ldots,f_d)$, with $(f_1,\ldots,f_d)$ a regular sequence analogously to Section \ref{sec:thhquot}. We still have a spectral sequence
\[
\THH_*(A'/\bS[z_1,\ldots,z_n];\Z_p) \otimes_\Z \HH_*(\Z[z_1,\ldots,z_n]) \Rightarrow \THH_*(A'),
\]
but the study of $\THH_*(A'/\bS[z_1,\ldots,z_n];\Z_p)$ turns out to be potentially more subtle. As opposed to Proposition \ref{quotient}, we only have a spectral sequence
\[
\THH_*(A/\bS[z_1,\ldots,z_n];\Z_p) \otimes_A \HH_*(A' / A) \Rightarrow \THH_*(A'/\bS[z_1,\ldots,z_n];\Z_p),
\]
but this does not necessarily degenerate into an equivalence since there is no analogue of the spherical lift $\bS[z]/z^k$ used in the proof of Proposition \ref{quotient}.

In our case, $\THH_*(A/\bS[z_1,\ldots,z_n];\Z_p)$ is $A[x]$, and $\HH_*(A' / A)$ is easily seen to be a divided power algebra on $d$ generators. So the spectral sequence is even and cannot have nontrivial differentials. However, there is potential for multiplicative extensions. We have been informed by Guozhen Wang that these do indeed show up, which will be part of forthcoming work of Guozhen Wang with Ruochuan Liu.

\section{Logarithmic THH of CDVRs} \label{logTHH}

In this section we want to explain how to deduce results about logarithmic topological Hochschild homology from our methods. This way we recover known computations of Hesselholt--Madsen \cite{MR1998478} for logarithmic $\THH$ of DVRs.  We thank Eva H\"oning for asking about the relation between relative and logarithmic $\THH$, which inspired this section. 

First we recall the definition of logarithmic $\THH$ following \cite{MR1998478, MR3941522} and \cite{MR2544395}. For an abelian monoid  $M$  we consider the spherical group ring 
$\bS[M]$ and have 
\[
\THH(\bS[M]) = \bS[ B^\mathrm{cyc} M]
\]
where $B^{\mathrm{cyc}}M$ is the cyclic Bar construction, i.e. the unstable version of topological Hochschild homology. 
We denote by $M \to M^{\gp}$ the group completion and define the logarithmic $\THH$ of $\bS[M]$ relative to $M$ by
\[
\THH\big(\bS[M]\, |\, M\big) := \bS[M \times_{M^{\gp}}  B^\mathrm{cyc} M^{\gp} ] \ .
\]
There are induced maps of commutative  ring spectra 
\[
\THH(\bS[M]) \to \THH\big(\bS[M] \,|\, M\big) \to \bS[M] 
\]
whose composition is the canonical map. These are induced from the maps $B^\mathrm{cyc} M\to M \times_{M^{\gp}}  B^\mathrm{cyc} M^{\gp} \to M$.
\begin{defn}
For a commutative ring $R$ with a map $\bS[M] \to R$ 
we define \emph{logarithmic THH} as the commutative ring spectrum
\[
\THH(R\, |\, M) := \THH(R) \otimes_{\THH(\bS[M])} \THH\big(\bS[M]\, |\,  M\big) \ .
\]
\end{defn}
In practice, we will only need the case $M = \mathbb{N}$ with the map 
$\bS[\mathbb{N}] = \bS[z] \to R$ given by sending $z$ to an element $\pi \in R$. In this case we will also denote $\THH(R\,|\,\mathbb{N})$ by $\THH(R\,|\,\pi)$.

\begin{lem}\label{lem_log}
We have an equivalence of commutative ring spectra
\[
\THH(R / \bS[M]) \simeq \THH(R\, |\, M)  \otimes_{\THH(\bS[M]\, |\, M)} \bS[M] \ .
\]
\end{lem}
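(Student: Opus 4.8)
The plan is to unwind both sides into expressions built only from $\THH(R)$, $\THH(\bS[M])$ and $\bS[M]$, and then invoke the base-change equivalence $\THH(R/S)\simeq\THH(R)\otimes_{\THH(S)}S$ recalled in the conventions, applied to $S=\bS[M]$. First I would expand the right-hand side using the definition $\THH(R\,|\,M)=\THH(R)\otimes_{\THH(\bS[M])}\THH(\bS[M]\,|\,M)$, giving
\[
\THH(R\,|\,M)\otimes_{\THH(\bS[M]\,|\,M)}\bS[M]\simeq\Big(\THH(R)\otimes_{\THH(\bS[M])}\THH(\bS[M]\,|\,M)\Big)\otimes_{\THH(\bS[M]\,|\,M)}\bS[M].
\]
By associativity of the relative tensor product along the ring maps $\THH(\bS[M])\to\THH(\bS[M]\,|\,M)\to\bS[M]$, the right-hand side is equivalent to $\THH(R)\otimes_{\THH(\bS[M])}\bS[M]$, where $\bS[M]$ carries the $\THH(\bS[M])$-module structure induced by the composite $\THH(\bS[M])\to\THH(\bS[M]\,|\,M)\to\bS[M]$.

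Next I would identify this composite. It is induced by applying $\bS[-]$ to the composite of space-level maps $B^{\mathrm{cyc}}M\to M\times_{M^{\gp}}B^{\mathrm{cyc}}M^{\gp}\to M$, which the excerpt already records as the canonical map; hence the composite $\THH(\bS[M])=\bS[B^{\mathrm{cyc}}M]\to\bS[M]$ is the canonical augmentation. Therefore the right-hand side is $\THH(R)\otimes_{\THH(\bS[M])}\bS[M]$ with $\bS[M]$ carrying its usual $\THH(\bS[M])$-algebra structure, and by the base-change formula $\THH(R/S)\simeq\THH(R)\otimes_{\THH(S)}S$ this is exactly $\THH(R/\bS[M])$. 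Since every map in sight is a map of commutative ring spectra and the relevant tensor products are tensor products of $\E_\infty$-rings, the resulting chain of equivalences is one of commutative ring spectra.

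The only point requiring care — and the closest thing to an obstacle — is the bookkeeping of module structures: one must check that the $\THH(\bS[M]\,|\,M)$-module structure on $\THH(R)\otimes_{\THH(\bS[M])}\THH(\bS[M]\,|\,M)$ used to form the outer tensor product is the one along which associativity applies, and that the induced map $\THH(\bS[M])\to\bS[M]$ really is the canonical augmentation. Both are immediate from the constructions, amounting to a short diagram chase over the maps $B^{\mathrm{cyc}}M\to M\times_{M^{\gp}}B^{\mathrm{cyc}}M^{\gp}\to M$, so I do not expect any genuine difficulty.
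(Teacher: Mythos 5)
Your proposal is correct and is essentially the paper's own argument read in the opposite direction: the paper starts from $\THH(R/\bS[M])\simeq\THH(R)\otimes_{\THH(\bS[M])}\bS[M]$ and inserts $\THH(\bS[M]\,|\,M)$ in the middle, while you expand the right-hand side and collapse it by associativity, using the same base-change formula, the definition of $\THH(R\,|\,M)$, and the fact (recorded in the paper right after the definition) that the composite $\THH(\bS[M])\to\THH(\bS[M]\,|\,M)\to\bS[M]$ is the canonical map. No gaps.
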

\begin{proof}
We have 
\begin{align*}
\THH(R / \bS[M]) & \simeq \THH(R) \otimes_{\THH(\bS[M])}  \bS[M] \\
& \simeq \THH(R) \otimes_{\THH(\bS[M])} \THH\big(\bS[M]\, |\,  M\big) \otimes_{\THH(\bS[M]\, |\, M)} \bS[M]  \\
& \simeq \THH(R\, |\, M)  \otimes_{\THH(\bS[M]\, |\, M)} \bS[M] \ .\qedhere
\end{align*}
\end{proof}

We will use this Lemma to get a spectral sequence similar to the one of Proposition \ref{prop:spectralsequencegeneral}. To this end let us introduce some further notation. We set
\[
\HH( \Z[M]\, |\, M) := \THH\big(\bS[M]\, |\, M\big) \otimes_{\bS} \Z = \Z[M \times_{M^{\gp}}  B^\mathrm{cyc} M^{\gp} ],
\]
which comes with a canonical map $\HH(\Z[M]) \to \HH( \Z[M]\, |\, M)$\ .
\begin{example}
For $M = \mathbb{N}$ we have $\Z[M] = \Z[z]$ and we get that the logarithmic Hochschild homology
$\HH_*(\Z[M]\, |\, M) = \HH_*(\Z[z]\,|\, z)$ is the exterior algebra over $\Z[z]$ on a generator $\dlog z$.
One should think of $\dlog z$ as `$dz / z$'. Indeed, under the canonical map
\[
\Omega^*_{\Z[z] / \Z} = \HH_*(\Z[z]) \to \HH_*(\Z[z]\, |\, z)
\]
the element $dz \in \Omega^1_{\Z[z]/\Z}$ gets mapped to $z\cdot \dlog z$ as one easily checks. In particular one should think of $\HH_*(\Z[z]\, |\, z)$ as differential forms on the space $\mathbb{A}^1 \setminus 0$ with logarithmic poles at $0$. This is a subalgebra of differential forms on $\mathbb{A}^1 \setminus 0$ as is topologically witnessed by the injective map 
$\HH_*(\Z[z]\, |\, z) \to \HH_*(\Z[z^\pm])$ and the map $\HH_*(\Z[z])$ then includes the forms on $\mathbb{A}^1$. 
\end{example}

\begin{prop}\label{SS_log}
For every map $\bS[M] \to R$ of commutative rings there is a multiplicative and convergent spectral sequence 
\begin{equation*}
\pi_i\Big(\THH(R/\bS[M]) \otimes_{\Z[M]} \HH_j(\Z[M]\, |\, M)\Big) \Rightarrow \pi_{i+j} \THH(R\, |\, M) \ .
\end{equation*}
Moreover this spectral sequence receives a multiplicative map from the spectral sequence 
\[
\pi_i\Big(\THH(R/\bS[M]) \otimes_{\Z[M]} \HH_j(\Z[M])\Big) \Rightarrow \pi_{i+j} \THH(R) 
\]
of Proposition \ref{prop:spectralsequencegeneral}, which refines on the abutment the canonical map $\THH_*(R) \to \THH_*(R\, |\, M)$ and on the $E^2$-page the map $\HH_*(\Z[M]) \to \HH_*(\Z[M]\,|\, M)$. Similarly, there is a $p$-completed version of this spectral sequence. 
\end{prop}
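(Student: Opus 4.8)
The plan is to run the construction of Proposition~\ref{prop:spectralsequencegeneral} with $\HH(\Z[M])$ replaced by the logarithmic variant $\HH(\Z[M]\,|\,M) = \THH(\bS[M]\,|\,M)\otimes_\bS\Z$, which is an $\E_\infty$-ring with $\pi_0 = \Z[M]$. The ring spectrum $\THH(R\,|\,M) = \THH(R)\otimes_{\THH(\bS[M])}\THH(\bS[M]\,|\,M)$ is in particular a $\THH(\bS[M]\,|\,M)$-algebra, and it is also a $\Z$-algebra (via $\Z\to R\to\THH(R)\to\THH(R\,|\,M)$), hence an algebra over $\THH(\bS[M]\,|\,M)\otimes_\bS\Z = \HH(\Z[M]\,|\,M)$. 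I would then apply the lax symmetric monoidal functor
\[
\Mod_{\HH(\Z[M]\,|\,M)} \longrightarrow \Mod_{\THH(R\,|\,M)}, \qquad N\longmapsto \THH(R\,|\,M)\otimes_{\HH(\Z[M]\,|\,M)} N,
\]
to the Whitehead tower $\cdots\to\tau_{\geq 1}\HH(\Z[M]\,|\,M)\to\HH(\Z[M]\,|\,M)$. This is a complete, multiplicative filtration through $\HH(\Z[M]\,|\,M)$-modules whose $j$-th associated graded is $\Sigma^j\HH_j(\Z[M]\,|\,M)$ with module structure factoring through $\Z[M]$. Applying the functor yields a complete multiplicative filtration of $\THH(R\,|\,M)$, and the standard machinery produces a conditionally convergent multiplicative spectral sequence which, being concentrated in a single quadrant, converges.

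The content is in identifying the associated graded. Since the $\HH(\Z[M]\,|\,M)$-module structure on $\HH_j(\Z[M]\,|\,M)$ factors through $\Z[M]$, associativity of the relative tensor product gives
\[
\THH(R\,|\,M)\otimes_{\HH(\Z[M]\,|\,M)}\Sigma^j\HH_j(\Z[M]\,|\,M)\simeq \Sigma^j\bigl(\THH(R\,|\,M)\otimes_{\HH(\Z[M]\,|\,M)}\Z[M]\bigr)\otimes_{\Z[M]}\HH_j(\Z[M]\,|\,M),
\]
so it remains to see $\THH(R\,|\,M)\otimes_{\HH(\Z[M]\,|\,M)}\Z[M]\simeq\THH(R/\bS[M])$. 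Here I would use that the $\THH(\bS[M]\,|\,M)$-module structure on $\THH(R\,|\,M)$ factors through $\HH(\Z[M]\,|\,M)$, together with $\HH(\Z[M]\,|\,M)\otimes_{\THH(\bS[M]\,|\,M)}\bS[M]\simeq\Z\otimes_\bS\bS[M] = \Z[M]$, to rewrite
\[
\THH(R\,|\,M)\otimes_{\THH(\bS[M]\,|\,M)}\bS[M]\simeq \THH(R\,|\,M)\otimes_{\HH(\Z[M]\,|\,M)}\bigl(\HH(\Z[M]\,|\,M)\otimes_{\THH(\bS[M]\,|\,M)}\bS[M]\bigr)\simeq\THH(R\,|\,M)\otimes_{\HH(\Z[M]\,|\,M)}\Z[M],
\]
and then Lemma~\ref{lem_log} identifies the left-hand side with $\THH(R/\bS[M])$. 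This gives the claimed $E^2$-page.

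For the comparison map, the map $\THH(\bS[M])\to\THH(\bS[M]\,|\,M)$ induces $\HH(\Z[M])\to\HH(\Z[M]\,|\,M)$ and, by functoriality of truncation, a map of Whitehead towers. Combined with $\THH(R)\to\THH(R\,|\,M)$, viewed as a map of $\HH(\Z[M])$-algebras where the target acquires its structure along $\HH(\Z[M])\to\HH(\Z[M]\,|\,M)$, and using the multiplication $\HH(\Z[M]\,|\,M)\otimes_{\HH(\Z[M])}\tau_{\geq j}\HH(\Z[M]\,|\,M)\to\tau_{\geq j}\HH(\Z[M]\,|\,M)$ to land in the right filtration stage, this produces a map from the filtered spectrum underlying the spectral sequence of Proposition~\ref{prop:spectralsequencegeneral} to the one constructed here. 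It is a map of multiplicative filtered commutative algebras, hence induces a multiplicative map of spectral sequences, which on abutments is $\THH_*(R)\to\THH_*(R\,|\,M)$ and on $E^2$ is $\HH_*(\Z[M])\to\HH_*(\Z[M]\,|\,M)$ tensored over $\Z[M]$ with $\THH_*(R/\bS[M])$. The $p$-completed version follows by $p$-completing everything, as in Proposition~\ref{prop:spectralsequencegeneral}. The only real subtlety I anticipate is keeping the many module structures straight so that the identification $\THH(R\,|\,M)\otimes_{\HH(\Z[M]\,|\,M)}\Z[M]\simeq\THH(R/\bS[M])$ is clean; beyond that the argument is a direct transcription of the proof of Proposition~\ref{prop:spectralsequencegeneral}.
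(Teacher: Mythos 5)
Your proposal is correct and is essentially the paper's own argument: the paper also filters $\THH(R\,|\,M)$ by $\THH(R\,|\,M)\otimes_{\HH(\Z[M]\,|\,M)}\tau_{\geq j}\HH(\Z[M]\,|\,M)$, repeats the manipulations of Proposition \ref{prop:spectralsequencegeneral}, and uses Lemma \ref{lem_log} to identify the associated graded, exactly as you do. Your write-up merely spells out the base-change identification $\THH(R\,|\,M)\otimes_{\HH(\Z[M]\,|\,M)}\Z[M]\simeq\THH(R/\bS[M])$ and the comparison of filtrations in more detail than the paper does.
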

\begin{proof}
We proceed exactly as in the proof of Proposition \ref{prop:spectralsequencegeneral} and define a filtration on $\THH(R\, |\, M)$ by
\[
\THH(R\, |\, M) \otimes_{\HH(\Z[M]\, |\, M)}  \tau_{\geq i}{\HH(\Z[M]\, |\, M)}  . 
\]
By the same manipulations as there we get the result using Lemma \ref{lem_log}. 
\end{proof}

Now for a CDVR $A$ of mixed characteristic with perfect residue field of characteristic $p$, we want to use this spectral sequence to determine the logarithmic $\THH_*(A\,|\,\pi ; \Z_p)$. As usual, this denotes the homotopy groups of the $p$-completion of $\THH(A \,|\, \pi)$.

 From Theorem \ref{thm:boekstedtDVR} we see that the spectral sequence of Proposition \ref{SS_log} takes the form
\[
E^2 = A[x] \otimes \Lambda(\dlog z) \Rightarrow \THH_*(A\,|\,\pi ; \Z_p)
\]
with $|x| = (2,0)$ and $|\dlog z| = (0,1)$. 
\[
\begin{tikzpicture}[yscale=0.7, xscale=1.4]
\node(00) at (0,0) {$A$};
\node(20) at (2,0) {$A\{x\}$};
\node(40) at (4,0) {$A\{x^2\}$};
\node at (5,0) {\ldots};
\node(01) at (0,1) {$A\{\dlog z\}$};
\node(21) at (2,1) {$A\{x\dlog z\}$};
\node(41) at (4,1) {\ldots};
\node at (1,0) {$0$};
\node at (1,1) {$0$};
\node at (3,0) {$0$};
\node at (3,1) {$0$};
\node at (0,2) {$0$};
\node at (1,2) {$0$};
\node at (2,2) {$0$};
\node at (3,2) {$\ldots$};
\node at (0,3) {$\vdots$};

\draw[->] (20) edge (01);
\draw[->] (40) edge (21);
\begin{scope}[shift={(-0.6,-0.5)}]
	\draw[-latex] (0,0) -- (7,0);
	\draw[-latex] (0,0) -- (0,4.2);
\end{scope}
\end{tikzpicture}
\]
The spectral sequence receives a map from the spectral sequence
\[
E^2 = A[x] \otimes \Lambda(dz) \Rightarrow \THH_*(A; \Z_p)
\]
used in Section \ref{section5}. This map sends $x$ to $x$ and $dz$ to $\pi \dlog z$. Thus from our knowledge of the differential in this second spectral sequence where we have $d^2(x) = \phi'(\pi) dz$ (Lemma \ref{lem:boekstedtelem}), we can conclude that $d^2$ in the first spectral sequence has to send $x$ to $\pi \phi'(\pi) \dlog z$. Thus 
we get the following result of Hesselholt--Madsen \cite[Theorem 2.4.1 and Remark 2.4.2]{MR1998478}.
\begin{prop}
For a CDVR $A$ of mixed characteristic with perfect residue field of characteristic $p$, the ring $\THH_*(A\,|\, \pi ; \Z_p)$ is isomorphic to the homology of the DGA
\[
H_*(A[x] \otimes \Lambda(\dlog\pi), \partial)
\]
with $\partial x = \pi \phi'(x)\dlog\pi$ and $\partial d\pi=0$. In particular
\[
\THH_*(A\,|\, \pi; \Z_p) = \begin{cases}
A & \text{for } * = 0 \\
A/  n \pi \phi'(\pi) & \text{for } * = 2n-1 \\
0 & \text{otherwise }
\end{cases}
\]
Similarly to Proposition \ref{prop:thhdga} one can also obtain a version with coefficients in an $A$-algebra $A'$, namely that $\pi_*( \THH(A \,|\, \pi; \Z_p) \otimes_A A')$ is given by the homology of the DGA 
$H_*(A'[x] \otimes \Lambda(\dlog\pi), \partial)$ with $\partial$ as above.
\end{prop}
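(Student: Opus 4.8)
The plan is to run the logarithmic spectral sequence of Proposition~\ref{SS_log} for $M=\mathbb{N}$ and to read off its one nontrivial differential by comparison with the absolute spectral sequence of Section~\ref{section5}. By Theorem~\ref{thm:boekstedtDVR} we have $\THH_*(A/\bS[z];\Z_p)=A[x]$, and since $\HH_*(\Z[z]\,|\,z)=\Lambda_{\Z[z]}(\dlog z)$ the ($p$-completed) $E^2$-page is $A[x]\otimes\Lambda(\dlog z)$, concentrated in the two rows $j=0,1$, with $|x|=(2,0)$ and $|\dlog z|=(0,1)$. The comparison map from the absolute spectral sequence (with $E^2$-page $A[x]\otimes\Lambda(dz)$) sends $x$ to $x$, and since $dz\mapsto z\,\dlog z$ under $\HH_*(\Z[z])\to\HH_*(\Z[z]\,|\,z)$ while $z$ acts on $\THH_*(A/\bS[z];\Z_p)$ as multiplication by $\pi$, it sends $dz$ to $\pi\,\dlog z$. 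Applying this map to the relation $d^2(x)=\phi'(\pi)\,dz$ of Lemma~\ref{lem:boekstedtelem}, and using that a map of spectral sequences commutes with $d^2$, I would conclude $d^2(x)=\pi\phi'(\pi)\,\dlog z$ directly (no injectivity of the comparison map is needed).

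Granting this, the remainder is formal. Multiplicativity forces $d^2(x^n)=n\pi\phi'(\pi)\,x^{n-1}\dlog z$ and $d^2(\dlog z)=0$ (for bidegree reasons), so $d^2$ is exactly the differential $\partial$ of the DGA $(A[x]\otimes\Lambda(\dlog\pi),\partial)$ with $\partial x=\pi\phi'(\pi)\,\dlog\pi$. Since $E^2$ lives in the rows $j\in\{0,1\}$, every $d^r$ with $r\ge3$ vanishes, so $E^3=E^\infty$ is the homology of this DGA, and since each total degree of $E^\infty$ is concentrated in a single bidegree there are no additive or multiplicative extension problems; this gives the ring isomorphism $\THH_*(A\,|\,\pi;\Z_p)\cong H_*(A[x]\otimes\Lambda(\dlog\pi),\partial)$. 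I would then evaluate the homology: $H_0=A$; in total degree $2n$ for $n\ge1$ it is $\{a\in A:a\cdot n\pi\phi'(\pi)=0\}$, which vanishes because $A$ is a domain of characteristic zero, $\pi\ne0$, and $\phi$ is separable---the fraction field of $A$ being a finite extension of the characteristic-zero field $W(k)[1/p]$---so that $\phi'(\pi)\ne0$; and in total degree $2n-1$ it is $A/n\pi\phi'(\pi)$.

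For the version with coefficients in an $A$-algebra $A'$, I would tensor the filtration on $\THH(A\,|\,\pi;\Z_p)$ underlying Proposition~\ref{SS_log} with $A'$ over $A$. As $\THH_*(A/\bS[z];\Z_p)=A[x]$ is $A$-free, the associated graded base-changes without $\Tor$-terms, producing a spectral sequence of the same shape with $E^2$-page $A'[x]\otimes\Lambda(\dlog z)$ and $d^2(x)=\pi\phi'(\pi)\,\dlog z$, which again collapses at $E^3$ for bidegree reasons---precisely the argument used for Proposition~\ref{prop:thhdga}.

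I do not anticipate a genuine obstacle here: the one point that needs care is the identification $dz\mapsto\pi\,\dlog z$ (so that Lemma~\ref{lem:boekstedtelem} transports to the logarithmic setting), after which the two-row structure of $E^2$ rules out any higher differentials or extension problems by pure bookkeeping.
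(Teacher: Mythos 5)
Your proposal is correct and follows essentially the same route as the paper: run the logarithmic spectral sequence of Proposition \ref{SS_log}, transport the differential $d^2(x)=\phi'(\pi)\,dz$ of Lemma \ref{lem:boekstedtelem} through the comparison map (which fixes $x$ and sends $dz\mapsto \pi\,\dlog z$), and conclude by the two-row degeneration with no extension problems. The extra details you supply (why $dz\mapsto\pi\,\dlog z$, nonvanishing of $n\pi\phi'(\pi)$, and the base-change to $A'$) are consistent with what the paper leaves implicit.
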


Note that one could alternatively also deduce the differential in the log spectral sequence using the description of $\THH_1(A \,|\, \pi; \Z_p)$ in terms of logarithmic K\"ahler differentials, similar to the way we have deduced the differential in the absolut spectral sequence for $\THH_*(A;\Z_p)$ in Lemma \ref{lem:boekstedtelem}.

\begin{remark}
We have considered the DVR $A$ together with the map $\mathbb{N} \to A$ as input for our logarithmic $\THH$. This is what is called a pre-log ring. The associated log ring is given by the saturation $M \to A$ with $M = A \cap (A[\pi^{-1}])^\times$. However we have $M = A^\times \times \mathbb{N}$ as one easily verifies. Chasing through the definitions one sees that this implies that $\THH(A \,|\, \mathbb{N}) \simeq \THH(A \,|\, M)$, i.e. that the logarithmic $\THH$ only depends on the logarithmic structure.
\end{remark}
\appendix

\section{Relation to the Hopkins-Mahowald result}\label{Thom}

Theorem \ref{thm:free} about $\F_p \otimes_{\bS} \F_p$ is closely related to the following statement due to Hopkins and Mahowald. 
We thank Mike Mandell for explaining a proof to us. 
\begin{thm}[Hopkins, Mahowald]
\label{thm:hopmah}
The Thom spectrum of the $\E_2$-map
\[
\Omega^2 S^3 \to \BGL_1(\bS_p)
\]
corresponding to the element $1-p \in \pi_0(\GL_1(\bS_p))$ is equivalent to $\F_p$.
\end{thm}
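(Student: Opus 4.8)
The plan is to identify this Thom spectrum, write $T := Mf$ for $f\colon \Omega^2 S^3 \to \BGL_1(\bS_p)$ the given $\E_2$-map, with $\F_p$ by base changing to $\F_p$ and invoking Theorem \ref{thm:free}. First I would record the formal features of $T$. As the Thom spectrum of an $\E_2$-map it is an $\E_2$-ring; as a colimit over $\Omega^2 S^3$ of shifts of the free rank-one $\bS_p$-module it is moreover a connective $\bS_p$-module; and since $H_*(\Omega^2 S^3;\Z)$ is of finite type, the homotopy groups $\pi_n T$ are finitely generated over $\Z_p$ in each degree, so $T$ is $p$-complete. Next, because $1-p$ maps to $1$ in $\pi_0(\F_p)^\times$, while $\BGL_1(\F_p)$ is a $K(\F_p^\times,1)$ and $\Omega^2 S^3$ is the free $\E_2$-group on $S^1$, the composite $\Omega^2 S^3 \xrightarrow{f} \BGL_1(\bS_p) \to \BGL_1(\F_p)$ is the constant $\E_2$-map up to a contractible space of choices. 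Such a trivialization supplies, on the one hand, a map of $\E_2$-rings $g\colon T \to \F_p$, and on the other hand a multiplicative Thom isomorphism identifying $\F_p \otimes_\bS T$ with $\F_p[\Omega^2 S^3] = \Free^{\F_p}_{\E_2}(\Sigma\F_p)$ as $\E_2$-$\F_p$-algebras.

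Under this identification $\F_p \otimes_\bS g$ becomes a map of $\E_2$-$\F_p$-algebras from $\Free^{\F_p}_{\E_2}(\Sigma\F_p)$ to $\F_p \otimes_\bS \F_p$, and the target is, by Theorem \ref{thm:free}, also free on a degree-$1$ generator. By the rigidity observation at the end of the proof of Proposition \ref{prop:key} — in such an algebra any nonzero class in $\pi_1$ generates everything under products and Dyer--Lashof operations — the map $\F_p\otimes_\bS g$ is an equivalence provided it is nonzero on $\pi_1$.

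To check this I would restrict along the bottom cell $\iota\colon S^1 \hookrightarrow \Omega^2 S^3$. The Thom spectrum of $f\circ\iota\colon S^1 \to \BGL_1(\bS_p)$ is $\cofib(\bS_p \xrightarrow{-p} \bS_p) \simeq \bS/p$, and along $M(f\circ\iota) \to T$ the map $g$ restricts to the canonical map $\bS/p \to \F_p$, the unique nonzero element of $[\bS/p,\F_p] = \F_p$. Since $\cofib(\bS/p \to \F_p)$ is $2$-connective, this map is an isomorphism on $H_1(-;\F_p)$, hitting the degree-$1$ generator of the dual Steenrod algebra; and as $H_1(S^1;\F_p) \to H_1(\Omega^2 S^3;\F_p)$ is also an isomorphism, it follows that $\F_p\otimes_\bS g$ is an isomorphism on $\pi_1$, hence an equivalence. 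Finally $g$ is then an $\F_p$-homology equivalence of connective $p$-complete spectra, hence an equivalence, which proves $T \simeq \F_p$.

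The step I expect to be the main obstacle is the multiplicative Thom isomorphism: upgrading $\F_p\otimes_\bS T \simeq \F_p[\Omega^2 S^3]$ from an equivalence of $\F_p$-modules to one of $\E_2$-$\F_p$-algebras, which requires handling Thom spectra of $\E_n$-maps as colimits over $\BGL_1$ and tracking the $\E_2$-structure through the chosen trivialization. Everything after that is short once Theorem \ref{thm:free} is available, which is exactly why the two results are ``closely related'' as asserted at the start of this appendix.
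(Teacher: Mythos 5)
Your proof is correct, and it rests on the same two essential inputs as the paper's argument -- Theorem \ref{thm:free} together with the rigidity statement at the end of Proposition \ref{prop:key} -- but it reaches them by a genuinely different packaging. The paper's proof (Lemma \ref{equiv}) never invokes a Thom isomorphism or a null-homotopy over $\BGL_1(\F_p)$: it views $\Omega^2 S^3 \to \BGL_1(\bS_p)$ as the free $\E_2$-monoid on $S^1 \xto{1-p} \BGL_1(\bS_p)$ in $(\cS_*)_{/\BGL_1(\bS_p)}$ and uses that the Thom spectrum functor is symmetric monoidal and colimit-preserving to identify the Thom spectrum integrally with $\Free^{\E_2}(\bS \to \bS/p)$, the free $\E_2$-algebra on the pointed module $\bS\to\bS/p$; the comparison map to $\F_p$ then comes from the module map $\bS/p\to\F_p$ by adjunction, and after smashing with $\F_p$ the source becomes $\Free^{\F_p}_{\E_2}(\Sigma\F_p)$ via the splitting $\F_p\otimes\bS/p\simeq\F_p\oplus\Sigma\F_p$, so Theorem \ref{thm:free} applies at once. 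You instead work over $\BGL_1(\F_p)\simeq B\F_p^\times$: the $\E_2$ null-homotopy of the composite (available because $\E_2$-maps out of $\Omega^2 S^3$ are pointed maps out of $S^1$ and $1-p\equiv 1$ mod $p$) produces both the orientation $g\colon T\to\F_p$ and the multiplicative Thom isomorphism $\F_p\otimes_\bS T\simeq\F_p[\Omega^2 S^3]$, and the $\pi_1$-condition is then checked by restriction to the bottom cell, whose Thom spectrum is $\bS/p$. The paper's route buys complete formality -- the role of $\bS/p$ is built in as the generator of the free algebra, so no orientation theory or bottom-cell homology check is needed -- while yours stays closer to the classical Hopkins--Mahowald and Blumberg--Cohen--Schlichtkrull picture and makes the Thom class explicit, at the cost of importing the multiplicative Thom isomorphism for $\E_2$-maps, the step you rightly flag; that statement is standard (see e.g.\ the reference \cite{antolin2019simple} cited in the paper), and the paper's free-monoid manoeuvre can be read as a proof of exactly this fact in the case at hand. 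Two harmless inaccuracies: $[\bS/p,\F_p]\cong\F_p$ has $p-1$ nonzero elements, all unit multiples of the canonical map, so ``unique'' should be ``unique up to a unit''; and $2$-connectivity of $\cofib(\bS/p\to\F_p)$ a priori only gives surjectivity on $H_1(-;\F_p)$, which is an isomorphism here because both groups are one-dimensional -- and surjectivity is all your argument actually needs.
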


We claim that this result is equivalent to Theorem \ref{thm:free}. More precisely we will show that each of the two results can be deduced from the other only using formal considerations and elementary connectivity arguments. 

\begin{lem}\label{equiv}
Theorem \ref{thm:hopmah} is equivalent to Theorem \ref{thm:free}.
\end{lem}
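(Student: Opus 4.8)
The plan is to establish the equivalence of Theorem~\ref{thm:hopmah} and Theorem~\ref{thm:free} by translating between the Thom spectrum description and the relative tensor product description of $\F_p$. The key observation is that if $f\colon \Omega^2 S^3 \to \BGL_1(\bS_p)$ is the $\E_2$-map corresponding to $1-p \in \pi_0(\GL_1(\bS_p))$, then its Thom spectrum $M f$ is an $\E_2$-ring, and by the standard formula for Thom spectra, $\bS_p \otimes_{M f} \bS_p \simeq \bS_p \otimes_{\bS_p} \Sigma^\infty_+ \Omega^2 S^3 = \bS_p[\Omega^2 S^3]$, since smashing $M f$ with $\bS_p$ along the unit kills the twisting. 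Dually, $\Free^{\bS_p}_{\E_2}(\Sigma^{-1}\bS_p)$-type manipulations via the bar construction identify $M f$ with the relative tensor product $\bS_p \otimes_{\Free_{\E_2}(\Sigma^{?})} \cdots$; more precisely one uses that $M f$ is the pushout (as $\E_2$-$\bS_p$-algebras) along $\Free_{\E_2}(\Sigma \bS_p) \to \bS_p$ of the generator to the Thom class, following \cite[Corollary~5.2.2.13]{HA} adapted to $\E_2$ via Dunn additivity as in the remark after Theorem~\ref{thm:bok}.

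First I would set up the dictionary: given Theorem~\ref{thm:free}, i.e.\ $\F_p \otimes_\bS \F_p \simeq \F_p[\Omega^2 S^3] = \Free^{\F_p}_{\E_2}(\Sigma \F_p)$ as $\E_2$-$\F_p$-algebras, I would deduce that $M f$ and $\F_p$ have the same $\bS_p$-homology (namely $\F_p$) and in fact the same $\E_2$-$\bS_p$-algebra structure after base change, then run a connectivity/deformation argument to promote this to an equivalence $M f \simeq \F_p$ of $\E_2$-rings. The cleanest route is: both $M f$ and $\F_p$ are $\E_2$-$\bS_p$-algebras equipped with a map from $\Free_{\E_2}(\Sigma^{-1}\bS_p/p)$ — or rather, both are quotients of $\bS_p$ by $p$ in the $\E_1$-sense — and Theorem~\ref{thm:free} says precisely that the two-sided bar constructions $\bS_p \otimes_{M f} \bS_p$ and $\bS_p \otimes_{\F_p} \bS_p$ agree as $\E_1$-$\bS_p$-algebras; then one invokes a version of the Lemma just before Remark~\ref{rem_equib} (that a map of augmented connected algebras inducing an equivalence on the relative tensor product $\one \otimes_A \one$ is itself an equivalence), applied over $\bS_p$ rather than $\F_p$, after checking the relevant connectivity hypotheses hold $p$-adically. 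Conversely, given Theorem~\ref{thm:hopmah}, i.e.\ $M f \simeq \F_p$, I would run the bar construction $\bS_p \otimes_{M f} \bS_p \simeq \bS_p \otimes_{\F_p} \bS_p$ and use the Thom isomorphism $\bS_p \otimes_{M f} \bS_p \simeq \bS_p[\Omega^2 S^3]$ together with the fact that $\bS_p \otimes_\bS \F_p = \F_p$ is already $p$-complete to identify $\F_p \otimes_\bS \F_p \simeq \F_p[\Omega^2 S^3]$ as $\E_2$-$\F_p$-algebras, which is exactly Theorem~\ref{thm:free}.

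The main obstacle, and the point where care is needed, is the passage from "equivalence after a single base change along the unit $\bS_p \to \F_p$" back to "equivalence of $\E_2$-rings", since base change along $\bS_p \to \F_p$ is far from conservative on general spectra. This is resolved by the connectivity structure: $M f$ and $\F_p$ are both connective, $\pi_0 = \F_p$, and the relevant cofibers ($M f / \bS_p$, etc.) are $1$-connective, so the augmented-algebra argument of the Lemma before Remark~\ref{rem_equib} applies — one filters the bar construction by skeleta, whose associated graded in degree $n$ is $\Sigma^n$ of an $n$-fold smash of a $1$-connective spectrum, so increasingly connective, and concludes that detecting an equivalence on $\bS_p \otimes_A \bS_p$ suffices to detect it on $A$ in the connected augmented setting. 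I would also need to verify the identification of the Thom spectrum's associated bar construction with the group ring $\bS_p[\Omega^2 S^3]$; this is the content of the Thom isomorphism for $\bS_p \otimes_{M f}(-)$ combined with the description of $M f$ as an $\E_2$-$\bS_p$-algebra pushout, and it is formal given the functoriality of Thom spectra, so I expect it to be routine once the setup is in place.
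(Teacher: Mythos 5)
There is a genuine gap, and it sits at the center of your argument: the objects $\bS_p \otimes_{Mf} \bS_p$ and $\bS_p \otimes_{\F_p} \bS_p$ that you want to compare are not defined. A relative tensor product of this shape requires $\bS_p$ to be a (left and right) module over $Mf$, i.e.\ an $\E_1$-map $Mf \to \End(\bS_p) \simeq \bS_p$; no such map exists, since $\pi_0(Mf) = \F_p$ (equivalently, such an orientation would force $Mf \simeq \bS_p[\Omega^2 S^3]$, contradicting the nontriviality of the twist $1-p$), and likewise there is no ring map $\F_p \to \bS_p$. The correct ``untwisting'' statement is the Thom isomorphism in the other direction, namely base change along the unit to an algebra over which the bundle becomes orientable: $Mf \otimes_{\bS} \F_p \simeq \F_p[\Omega^2 S^3]$ as $\E_2$-$\F_p$-algebras. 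For the same reason your plan to invoke the lemma preceding Remark \ref{rem_equib} ``over $\bS_p$ rather than $\F_p$'' cannot be carried out: that lemma is about \emph{augmented connected} $\E_1$-algebras over the base, and neither $Mf$ nor $\F_p$ admits an augmentation to $\bS_p$, nor is either connected over $\bS_p$ (the unit is not an isomorphism on $\pi_0$). Finally, the ``main obstacle'' you identify --- passing from an equivalence after $-\otimes_\bS \F_p$ back to an equivalence of ring spectra --- does not need any bar-filtration argument: both $Mf$ and $\F_p$ are connective and $p$-complete, and $-\otimes_\bS \F_p$ is conservative on bounded-below $p$-complete spectra; this elementary observation is exactly what the paper uses.

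For comparison, the paper's proof avoids all relative tensor products over $Mf$: it uses symmetric monoidality of the Thom spectrum functor to identify $Mf$ as the free $\E_2$-$\bS_p$-algebra on the \emph{pointed} module $\bS_p \to \bS_p/p$ (close in spirit to the $\E_2$-pushout description you mention in passing, which is the one salvageable ingredient of your outline), uses the map $\bS/p \to \F_p$ to produce the comparison map $\Free^{\E_2}(\bS \to \bS/p) \to \F_p$, reduces to checking this map is an equivalence after $-\otimes_\bS\F_p$ by $p$-completeness, and then observes that the base-changed map is literally $\Free^{\E_2}_{\F_p}(\Sigma\F_p) \to \F_p \otimes_\bS \F_p$ (using $\F_p \otimes \bS/p \simeq \F_p \oplus \Sigma\F_p$), so that its being an equivalence is precisely Theorem \ref{thm:free}. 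If you want to rescue your outline, replace every occurrence of $\bS_p \otimes_{Mf} \bS_p$ by $Mf \otimes_\bS \F_p$ and argue along these lines; as written, the proposed proof does not go through.
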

\begin{proof}
Let us first phrase Theorem \ref{thm:hopmah} a bit more conceptually following 
\cite{antolin2019simple}.
We can view $\Omega^2 S^3 \to \BGL_1(\bS_p)$ as the free $\E_2$-monoid on 
\[
S^1 \xto{1-p} \BGL_1(\bS_p)
\]
in the category $(\cS_{*})_{/\BGL_1(\bS_p)}$ of pointed spaces over $\BGL_1(\bS_p)$. The Thom spectrum functor $\cS_{/\BGL_1(\bS_p)} \to \Mod_{\bS_p}$ is symmetric-monoidal and thus the Thom spectrum of $\Omega^2 S^3$ can equivalently be described as the free $\E_2$-algebra over $\bS_p$ on the pointed $\bS_p$-module obtained as the Thom spectrum of $S^1 \xto{1-p} \BGL_1(\bS_p)$. This is easily seen to be $\bS_p \to \bS_p/p$. Since the free $\E_2$-$\bS$-algebra on the pointed $\bS$-module $\bS\to \bS/p$ is already $p$-complete, it also agrees with this Thom spectrum. We will write this as $\Free^{\E_2}(\bS\to \bS/p)$. 
There is a map $\bS/p\to \F_p$ of pointed $\bS$-modules which induces an isomorphism on $\pi_0$. We get an induced map
\begin{equation}\label{Hopkins-Mahowald}
\Free^{\E_2}(\bS\to \bS/p)\to \F_p.
\end{equation}
Theorem \ref{thm:hopmah} is now equivalently phrased as the statement that the map \eqref{Hopkins-Mahowald} is an equivalence. 
Since both sides are $p$-complete, this is equivalent to the claim that the map is an equivalence after tensoring with $\F_p$. This is the map
\[
\Free^{\E_2}_{\F_p}(\F_p\to \F_p\otimes \bS/p) \to \F_p\otimes_{\bS} \F_p
\]
induced by the map $\F_p\otimes \bS/p\to \F_p\otimes_{\bS} \F_p$ of pointed $\F_p$-modules. It follows by elementary connectivity arguments that this map is an isomorphism on $\pi_0$ and $\pi_1$.

Now we have an equivalence $\F_p \otimes \bS/p \simeq \F_p \oplus \Sigma \F_p$ as pointed $\F_p$-modules. Thus, we can also write $\Free^{\E_2}_{\F_p}(\F_p\to \F_p\otimes \bS/p)$ as the free $\E_2$-algebra on the \emph{unpointed} $\F_p$-module $\Sigma \F_p$. Thus, the Hopkins-Mahowald result is seen to be equivalent to the claim that the map
\[
\Free^{\E_2}_{\F_p}(\Sigma \F_p) \to \F_p\otimes_{\bS} \F_p
\]
induced by a map $\Sigma \F_p\to \F_p\otimes_{\bS}\F_p$ which is an isomorphism on $\pi_1$, is an equivalence. But this is precisely Theorem \ref{thm:free}. 
\end{proof}

In Section \ref{sec_bokstedt} we have deduced B\"okstedt's theorem (Theorem \ref{thm:bok}) directly from Theorem \ref{thm:free}.  Blumberg--Cohen--Schlichtkrull deduce an additive version of B\"okstedt's theorem in \cite[Theorem 1.3]{MR2651551} from Theorem \ref{thm:hopmah}. A variant of this argument is also given in \cite[Section 9]{Blumberg}. We note that the argument that they use only works additively and does not give the ring structure on $\THH_*(\F_p)$. We will explain this argument now and also how to modify it to give the ring struture as well.

\begin{proof}[Proof of Theorem \ref{thm:bok} from Theorem \ref{thm:hopmah} ]
The Thom spectrum functor 
\[
\cS_{*/\BGL_1(\bS_p)}\to \Mod_{\bS_p}
\] preserves colimits and sends products to tensor products, and thus sends the unstable cyclic Bar construction of $\Omega^2 S^3$ to the cyclic Bar construction of $\F_p$. This identifies $\THH(\F_p)$ as an $\E_1$-ring with a Thom spectrum on the free loop space $LB\Omega^2 S^3 \simeq L\Omega S^3$. Now, using the natural fibre sequence of $\E_1$-monoids in $\cS_{/\BGL_1(\bS_p)}$, $\Omega^2 S^3 \to L\Omega S^3 \to \Omega S^3$, one can identify $\THH(\F_p)$ with $\F_p[\Omega S^3]$. For example, since this is a split fibre sequence of $\E_1$ monoids, one gets an equivalence $L\Omega S^3 \simeq \Omega^2 S^3 \times \Omega S^3$ and thus an identification of $\THH(\F_p)$ as a tensor product of the Thom spectrum on $\Omega^2 S^3$ (i.e. $\F_p$) and the Thom spectrum on $\Omega S^3$. Thus, a Thom isomorphism yields an equivalence $\THH(\F_p) \simeq \F_p[\Omega S^3]$. But the equivalence $L\Omega S^3 \simeq \Omega^2 S^3 \times \Omega S^3$ is not an $\E_1$-map, so this argument only describes $\THH(\F_p)$ additively.

One can fix this as follows. The Thom spectrum can be interpreted as the colimit of the functor $L\Omega S^3 \to \Sp$ obtained by postcomposing with the functor $\BGL_1(\bS_p) \to \Sp$ that sends the point to $\bS_p$. Instead of passing to the colimit directly, one can pass to the left Kan extension along the map $L\Omega S^3 \to \Omega S^3$. This yields a functor $\Omega S^3 \to \Sp$ which sends the basepoint of $\Omega S^3$ to the colimit along the fiber, i.e. the Thom spectrum over $\Omega^2 S^3$. But this is precisely $\F_p$. We thus obtain a functor $\Omega S^3 \to \BGL_1(\F_p)$, whose colimit is the Thom spectrum of $L\Omega S^3$. Since the original functor $L\Omega S^3 \to \Sp$ was lax monoidal, because it came from an $\E_1$ map, the Kan extension $\Omega S^3 \to \BGL_1(\F_p)$ is also an $\E_1$ map. But the space of $\E_1$ maps $\Omega S^3 \to \BGL_1(\F_p)$ agrees with the space of maps $S^2 \to \BGL_1(\F_p)$ and is thus trivial. So the resulting colimit $\THH(\F_p)$ is, as an $\E_1$ ring, given by $\F_p[\Omega S^3]$.
\end{proof}

We think that the proof of B\"okstedt's Theorem given in Section \ref{sec_bokstedt} directly from Theorem \ref{thm:free} is easier than the `Thom spectrum proof' presented in this section, since the latter first uses Theorem \ref{thm:free} to deduce the Hopkins-Mahowald theorem and then the (extended) Blumberg--Cohen--Schlichtkrull argument to deduce B\"okstedt's result. However, logically all three results (Theorems \ref{thm:bok}, \ref{thm:free} and \ref{thm:hopmah}) are equivalent as shown in Remark \ref{rem_equib} and Lemma \ref{equiv}. So either can be deduced from the others. It would be nice to have a proof of one of these that does not rely on computing the dual Steenrod algebra with its Dyer-Lashof operations (or dually the Steenrod algebra and the Nishida relations).

\bibliographystyle{amsalphaabrvd}
\bibliography{CyclotomicSpectra}

\providecommand{\bysame}{\leavevmode\hbox to3em{\hrulefill}\thinspace}
\providecommand{\MR}{\relax\ifhmode\unskip\space\fi MR }
\providecommand{\MRhref}[2]{%
  \href{http://www.ams.org/mathscinet-getitem?mr=#1}{#2}
}
\providecommand{\href}[2]{#2}
\begin{thebibliography}{BMMS86}

\bibitem[ACB19]{antolin2019simple}
O.~Antol{\'\i}n-Camarena and T.~Barthel, \emph{A simple universal property of
  thom ring spectra}, Journal of Topology \textbf{12} (2019), no.~1, 56--78.

\bibitem[AMN18]{MR3874698}
B.~Antieau, A.~Mathew, and T.~Nikolaus, \emph{On the {B}lumberg-{M}andell
  {K}\"{u}nneth theorem for {TP}}, Selecta Math. (N.S.) \textbf{24} (2018),
  no.~5, 4555--4576. \MR{3874698}

\bibitem[BCS10]{MR2651551}
A.~J. Blumberg, R.~L. Cohen, and C.~Schlichtkrull, \emph{Topological
  {H}ochschild homology of {T}hom spectra and the free loop space}, Geom.
  Topol. \textbf{14} (2010), no.~2, 1165--1242. \MR{2651551}

\bibitem[Blu10]{Blumberg}
A.~J. Blumberg, \emph{Topological {H}ochschild homology of {T}hom spectra which
  are {$E_\infty$} ring spectra}, J. Topol. \textbf{3} (2010), no.~3, 535--560.
  \MR{2684512}

\bibitem[BM94]{MR1317117}
M.~B\"{o}kstedt and I.~Madsen, \emph{Topological cyclic homology of the
  integers}, Ast\'{e}risque (1994), no.~226, 7--8, 57--143, $K$-theory
  (Strasbourg, 1992). \MR{1317117}

\bibitem[BMMS86]{MR836132}
R.~R. Bruner, J.~P. May, J.~E. McClure, and M.~Steinberger, \emph{{$H_\infty $}
  ring spectra and their applications}, Lecture Notes in Mathematics, vol.
  1176, Springer-Verlag, Berlin, 1986. \MR{836132}

\bibitem[BMS18]{MR3905467}
B.~Bhatt, M.~Morrow, and P.~Scholze, \emph{Integral {$p$}-adic {H}odge theory},
  Publ. Math. Inst. Hautes \'{E}tudes Sci. \textbf{128} (2018), 219--397.
  \MR{3905467}

\bibitem[BMS19]{MR3949030}
\bysame, \emph{Topological {H}ochschild homology and integral {$p$}-adic
  {H}odge theory}, Publ. Math. Inst. Hautes \'{E}tudes Sci. \textbf{129}
  (2019), 199--310. \MR{3949030}

\bibitem[Bre99]{MR1669039}
C.~Breuil, \emph{Sch\'{e}mas en groupe et modules filtr\'{e}s}, C. R. Acad.
  Sci. Paris S\'{e}r. I Math. \textbf{328} (1999), no.~2, 93--97. \MR{1669039}

\bibitem[Bru00]{MR1750729}
M.~Brun, \emph{Topological {H}ochschild homology of {${\bf Z}/p^n$}}, J. Pure
  Appl. Algebra \textbf{148} (2000), no.~1, 29--76. \MR{1750729}

\bibitem[Bru01]{MR1823499}
M.~Brun, \emph{Filtered topological cyclic homology and relative {$K$}-theory
  of nilpotent ideals}, Algebr. Geom. Topol. \textbf{1} (2001), 201--230.
  \MR{1823499}

\bibitem[Coh46]{MR16094}
I.~S. Cohen, \emph{On the structure and ideal theory of complete local rings},
  Trans. Amer. Math. Soc. \textbf{59} (1946), 54--106. \MR{16094}

\bibitem[DL62]{MR0141112}
E.~Dyer and R.~K. Lashof, \emph{Homology of iterated loop spaces}, Amer. J.
  Math. \textbf{84} (1962), 35--88. \MR{0141112}

\bibitem[HM03]{MR1998478}
L.~Hesselholt and I.~Madsen, \emph{On the {$K$}-theory of local fields}, Ann.
  of Math. (2) \textbf{158} (2003), no.~1, 1--113. \MR{1998478}

\bibitem[KA56]{MR0087948}
T.~Kudo and S.~Araki, \emph{Topology of {$H_n$}-spaces and {$H$}-squaring
  operations}, Mem. Fac. Sci. Ky\=usy\=u Univ. Ser. A. \textbf{10} (1956),
  85--120. \MR{0087948}

\bibitem[Kat94]{MR1293975}
K.~Kato, \emph{Semi-stable reduction and {$p$}-adic \'{e}tale cohomology},
  Ast\'{e}risque (1994), no.~223, 269--293, P\'{e}riodes $p$-adiques
  (Bures-sur-Yvette, 1988). \MR{1293975}

\bibitem[Kis09]{MR2600871}
M.~Kisin, \emph{Moduli of finite flat group schemes, and modularity}, Ann. of
  Math. (2) \textbf{170} (2009), no.~3, 1085--1180. \MR{2600871}

\bibitem[Lei18]{MR3941522}
M.~Leip, \emph{{$\THH$} of log rings}, Arbeitsgemeinschaft: {T}opological
  {C}yclic {H}omology (L.~Hesselholt and P.~Scholze, eds.), vol.~15,
  Oberwolfach {R}eports, no.~2, 2018, pp.~805--940. \MR{3941522}

\bibitem[Lin00]{MR1750728}
A.~Lindenstrauss, \emph{A relative spectral sequence for topological
  {H}ochschild homology of spectra}, J. Pure Appl. Algebra \textbf{148} (2000),
  no.~1, 77--88. \MR{1750728}

\bibitem[LM00]{MR1707702}
A.~Lindenstrauss and I.~Madsen, \emph{Topological {H}ochschild homology of
  number rings}, Trans. Amer. Math. Soc. \textbf{352} (2000), no.~5,
  2179--2204. \MR{1707702}

\bibitem[Lur18]{HA}
J.~Lurie, \emph{Higher {A}lgebra}, 2018,
  \url{www.math.harvard.edu/~lurie/papers/HA.pdf}.

\bibitem[NS18]{NS}
T.~Nikolaus and P.~Scholze, \emph{On topological cyclic homology}, Acta Math.
  \textbf{221} (2018), no.~2, 203--409. \MR{3904731}

\bibitem[Rog99]{MR1663391}
J.~Rognes, \emph{Algebraic {$K$}-theory of the two-adic integers}, J. Pure
  Appl. Algebra \textbf{134} (1999), no.~3, 287--326. \MR{1663391}

\bibitem[Rog09]{MR2544395}
\bysame, \emph{Topological logarithmic structures}, New topological contexts
  for {G}alois theory and algebraic geometry ({BIRS} 2008), Geom. Topol.
  Monogr., vol.~16, Geom. Topol. Publ., Coventry, 2009, pp.~401--544.
  \MR{2544395}

\bibitem[Stacks]{stacks-project}
{The Stacks project authors}, \emph{The stacks project},
  \url{https://stacks.math.columbia.edu}, 2019.

\end{thebibliography}

\end{document}